\newtheorem{theorem}{Theorem}[section]
\newtheorem{corollary}{Corollary}[section]
\theoremstyle{definition}
\newtheorem{definition}{Definition}[section]
\newtheorem{remark}{Remark}[section]
\newtheorem{algorithm}{Algorithm}[section]
\newtheorem{conjecture}{Conjecture}[section]
\numberwithin{equation}{section}
\begin{document}


\title
[Modeling rooted in-trees by finite \(p\)-groups]
{Modeling rooted in-trees by finite \(p\)-groups}

\author{Daniel C. Mayer}
\address{Naglergasse 53\\8010 Graz\\Austria}
\email{algebraic.number.theory@algebra.at}
\urladdr{http://www.algebra.at}

\thanks{Research supported by the Austrian Science Fund (FWF): P 26008-N25}

\subjclass[2000]{Primary 05C05, 05C07, 05C12, 05C22, 05C25, 05C38, 05C63, 05C70;
secondary 20D15, 20E18, 20E22, 20F05, 20F12, 20F14}
\keywords{Rooted directed in-trees, descendant trees, infinite paths, vertex distance, weighted edges,
pattern recognition methods, pattern classification, independent component analysis, graph dissection;
finite \(p\)-groups, limits, periodicity, extensions, nuclear rank, multifurcation, presentations, commutators, central series}

\date{January 27, 2017}

\begin{abstract}
In section
\ref{s:GraphTheory}
we describe the abstract graph theoretic foundations
for a kind of infinite rooted in-trees \(\mathcal{T}(R)=(V,E)\)
with root \(R\), weighted vertices \(v\in V\), and weighted directed edges \(e\in E\subset V\times V\).
The vertex degrees \(\deg(v)\) are always finite
but the trees contain infinite paths \((v_i)_{i\ge 0}\).
In section
\ref{s:GroupTheory}
we introduce a group theoretic model of the rooted in-trees \(\mathcal{T}(R)\).
Vertices are represented by isomorphism classes of finite \(p\)-groups \(G\),
for a fixed prime number \(p\).
Directed edges are represented by epimorphisms \(\pi:G\to\pi{G}\) of finite \(p\)-groups
with characteristic kernels \(\ker(\pi)\).
The weight of a vertex \(G\) is realized by its nuclear rank \(n(G)\)
and the weight of a directed edge \(\pi\) is realized by its step size \(s(\pi)=\log_p(\#\ker(\pi))\).
These invariants are essential for understanding the phenomenon of \textit{multifurcation}.
Since the structure of our rooted in-trees is rather complex,
we use pattern recognition methods for finding finite subgraphs
which repeat indefinitely. Several periodicities admit the reduction
of the complete infinite graph to finite patterns.
Additionally, we employ independent component analysis
for obtaining a graph dissection into pruned subtrees.
As a coronation of this chapter,
we show in section
\ref{s:3ClassTowerLength3}
that \textit{fork topologies} provide a convenient description
of very complex navigation paths through the trees,
arising from repeated multifurcations,
which are of the greatest importance for recent progress in determining \(p\)-class field towers
of algebraic number fields.
\end{abstract}

\maketitle






\section{Underlying abstract graph theory}
\label{s:GraphTheory}
\noindent
Let \(\mathcal{G}=(V,E)\) be a \textit{graph}
with set of \textit{vertices} \(V\)
and set of \textit{edges} \(E\).
We expressly admit infinite sets \(V\) and \(E\),
but we assume that the in- and out-degree of each vertex is finite.



\subsection{Directed edges and paths}
\label{ss:Directed}
\noindent
In this chapter,
we shall be concerned with \textit{directed graphs} (digraphs)
whose edges are rather ordered pairs \((v_1,v_2)\in V\times V\)
than only subsets \(\lbrace v_1,v_2\rbrace\subset V\) with two elements.
Such a \textit{directed edge} \(e=(v_1,v_2)\) is also denoted by an arrow \(v_1\to v_2\)
with starting vertex \(v_1\) and ending vertex \(v_2\).
Thus, we have \(E\subset V\times V\).
Now, infinitude comes in.


\begin{definition}
\label{dfn:Path}
(Finite and infinite paths.) \\
A \textit{finite path} of \textit{length} \(\ell\ge 0\) in \(\mathcal{G}\)
is a finite sequence \((v_i)_{0\le i\le\ell}\) of vertices \(v_i\in V\)
such that \((v_i,v_{i+1})\in E\) for \(0\le i\le\ell-1\).
We call \(v_0\), resp. \(v_\ell\), the starting vertex, resp. ending vertex, of the path.
The degenerate case of a single vertex \((v_0)\)
is called a \textit{point path} of length \(\ell=0\).

An \textit{infinite path} in \(\mathcal{G}\)
is an infinite sequence \((v_i)_{i\ge 0}\) of vertices \(v_i\in V\)
such that \((v_{i+1},v_i)\in E\) for all \(i\ge 0\).
In this case, \(v_0\) is the ending vertex of the path, and there is no starting vertex.
\end{definition}



\subsection{Rooted in-trees with parent operator}
\label{ss:Trees}
\noindent
Our attention will even be restricted to \textit{rooted in-trees} \(\mathcal{G}=\mathcal{T}(R)\),
that is, connected digraphs without cycles
such that the \textit{root vertex} \(R\) has out-degree \(0\)
whereas any other vertex \(v\in V\setminus\lbrace R\rbrace\) has out-degree \(1\).
A vertex with in-degree at least \(1\) is called \textit{capable}
whereas a vertex with in-degree \(0\) is called a \textit{leaf}.
For a rooted in-tree we can define the parent operator as follows.


\begin{definition}
\label{dfn:Parents}
Let \(\mathcal{T}(R)=(V,E)\) be a rooted in-tree.
Then the mapping \(\pi:\,V\setminus\lbrace R\rbrace\to V\), \(v\mapsto\pi{v}\),
where \((v,\pi{v})\in E\) is the unique edge with starting vertex \(v\),
is called the \textit{parent operator} of \(\mathcal{T}(R)\).
For each vertex \(v\in V\),
there exists a unique finite \textit{root path} from \(v\) to the root \(R\),
\[v=\pi^0{v}\to\pi^1{v}\to\pi^2{v}\to\ldots\to\pi^{\ell-1}{v}\to\pi^\ell{v}=R,\]
expressed by iterated applications of the parent operator,
and with some \textit{length} \(\ell\ge 0\).
Each vertex in the root path of \(v\) is called an \textit{ancestor} of \(v\).

The \textit{descendant tree} \(\mathcal{T}(a)=(V(a),E(a))\) of a vertex \(a\in V\)
is the subtree of \(\mathcal{T}(R)=(V,E)\)
consisting of vertices \(v\) with ancestor \(a\),
that is \(v\in V(a):=\lbrace u\in V\mid (\exists\, j\ge 0)\ \pi^j{u}=a\rbrace\),
and edges \(e\in E(a):=E\bigcap (V(a)\times V(a))\).

A vertex \(u\in V\) is called an \textit{immediate descendant} (or \textit{child}) of a vertex \(a\in V\),
if there exists a directed edge \((u,a)\in E\).
In this case, \(a=\pi{u}\) is necessarily the \textit{parent} of \(u\) .
\end{definition}


\noindent
We can define a \textit{partial order} on the vertices \(u,a\in V\) of the tree \(\mathcal{T}(R)\)
by putting \(u\ge a\) if \(u\in\mathcal{T}(a)\), that is,
if \(u\) is descendant of \(a\), and \(a\) is ancestor of \(u\).
The root \(R\) is the minimum.

The root \(R\) is always a common ancestor of two vertices \(u,v\in V\).
By the \textit{fork} of \(u\) and \(v\) we understand their biggest common ancestor,
denoted by \(\mathrm{Fork}(u,v)\),
which admits a measure.


\begin{definition}
\label{dfn:VertexDistance}
(Vertex distance.) \quad
The sum \(\ell_u+\ell_v\) of the path lengths from two vertices \(u,v\in V\) to their fork
is called the \textit{distance} \(d(u,v)\) of the vertices.
\end{definition}



\subsection{Mainlines and multifurcation}
\label{ss:Weights}
\noindent
We shall also need \textit{weight functions}
with non-negative integer values for vertices \(w_V:\,V\to\mathbb{N}_0\),
and with positive integer values for edges \(w_E:\,E\to\mathbb{N}\).
In particular, the sets of vertices and edges have disjoint partitions
\begin{equation}
\label{eqn:WeightPartitions}
\begin{aligned}
V &= \dot{\bigcup}_{n\ge 0}\,V_n \text{ with } V_n:=\lbrace v\in V\mid w_V(v)=n\rbrace \text{ for } n\ge 0, \\
E &= \dot{\bigcup}_{s\ge 1}\,E_s \text{ with } E_s:=\lbrace e\in E\mid w_E(e)=s\rbrace \text{ for } s\ge 1,
\end{aligned}
\end{equation}
such that \(V_0\) is precisely the \textit{set of leaves} of the tree \(\mathcal{T}(R)\).
Thus, there arise weighted measures.


\begin{definition}
\label{dfn:WeightedDistance}
(Path weight and weighted distance.) \\
By the \textit{path weight} of a finite path \((v_i)_{0\le i\le\ell}\)
with length \(\ell\ge 0\) in \(\mathcal{T}(R)\)
such that \((v_i,v_{i+1})\in E_{s_i}\) for \(0\le i\le\ell-1\)
we understand the sum \(\sum_{i=0}^{\ell-1}\,s_i\).
The sum \(w_u+w_v\) of the path weights from two vertices \(u,v\in V\) to their fork
is called the \textit{weighted distance} \(w(u,v)\) of the vertices.
\end{definition}


In Definition
\ref{dfn:Mainline}
and
\ref{dfn:Branches},
some concepts are introduced using the minimal possible weight.

\begin{definition}
\label{dfn:Mainline}
(Mainlines and minimal trees.) \quad
An infinite path \((v_i)_{i\ge 0}\) in \(\mathcal{T}(R)\) with edges of weight \(1\),
that is, such that \((v_{i+1},v_i)\in E_1\) for all \(i\ge 0\),
is called a \textit{mainline} in \(\mathcal{T}(R)\).

The \textit{minimal tree} \(\mathcal{T}_1(a)=(V_1(a),E_1(a))\) of a vertex \(a\in V\)
is the subtree of the descendant tree \(\mathcal{T}(a)=(V(a),E(a))\)
consisting of vertices \(v\),
whose root path in \(\mathcal{T}(a)\) possesses edges \(e\) of weight \(1\) only,
that is \(v\in V_1(a):=\lbrace u\in V(a)\mid (\forall\, 0\le j<\ell)\ (\pi^{j}{u},\pi^{j+1}{u})\in E_1\rbrace\),
and edges \(e\in E_1(a):=E(a)\bigcap (V_1(a)\times V_1(a))\).
\end{definition}


\begin{definition}
\label{dfn:Branches}
(Branches.) \quad
Let \((v_i)_{i\ge 0}\) be a mainline in \(\mathcal{T}(R)\).
For \(i\ge 0\), the difference set
\(\mathcal{B}(v_i):=\mathcal{T}_1(v_i)\setminus\mathcal{T}_1(v_{i+1})\)
of minimal trees
is called the \textit{branch} with root \(v_i\) of the minimal tree \(\mathcal{T}_1(v_0)\).
The branches give rise to a disjoint partition
\(\mathcal{T}_1(v_0)=\dot{\bigcup}_{i\ge 0}\,\mathcal{B}(v_i)\).
\end{definition}


Finally, we complete our abstract graph theoretic language by considering arbitrary weights.

\begin{definition}
\label{dfn:Multifurcation}
(Multifurcation.) \\
Let \(n\ge 2\) be a positive integer.
A vertex \(a\in V_n\) has an \(n\)-fold \textit{multifurcation}
if its in-degree is an \(n\)-fold sum \(N_1+N_2+\ldots+N_n\)
due to \(N_s\ge 1\) incoming edges of weight \(s\), for each \(1\le s\le n\).
That is, we define counters \(N_s\) of all incoming edges of weight \(s\),
and additionally, we have counters \(C_s\) of all incoming edges of weight \(s\) with capable starting vertex,
\begin{equation}
\label{eqn:MultiFurcation}
\begin{aligned}
N_s &:= N_s(a):=\#\lbrace e\in E_s\mid e=(u,a) \text{ for some } u\in V\rbrace, \\
C_s &:= C_s(a):=\#\lbrace e\in E_s\mid e=(u,a) \text{ for some } u\in V \text{ with } w_V(u)\ge 1\rbrace.
\end{aligned}
\end{equation}
We also define an ordering and a notation
\cite{GNO}
for immediate descendants of \(a\) by writing
\(a-\#s;i\) for the \(i\)th immediate descendant with edge of weight \(s\), where \(1\le s\le n\) and \(1\le i\le N_s\).
\end{definition}



\section{Concrete model in \(p\)-group theory}
\label{s:GroupTheory}
\noindent
Now we introduce a group theoretic model of the rooted in-trees \(\mathcal{T}(R)=(V,E)\) in \S\
\ref{s:GraphTheory}.
Vertices \(v\in V\) are represented by isomorphism classes of finite \(p\)-groups \(G\),
for a fixed prime number \(p\).
Directed edges \(e\in E\) are represented by epimorphisms \(\pi:G\to\pi{G}\) of finite \(p\)-groups
with characteristic kernels \(\ker(\pi)=\gamma_c{G}\),
where \(c:=\mathrm{cl}(G)\) denotes the nilpotency class of \(G\)
and \((\gamma_i{G})_{i\ge 1}\) is the lower central series of \(G\).

We emphasize that the symbol \(\pi\) is used now intentionally for two distinct mappings,
the abstract parent operator \(\pi:\,V\setminus\lbrace R\rbrace\to V\), \(v\mapsto\pi{v}\), in Definition
\ref{dfn:Parents},
and the concrete natural projection onto the quotient \(\pi:G\to\pi{G}\simeq G/\gamma_c{G}\), \(g\mapsto\pi(g)=g\cdot\gamma_c{G}\),
for each individual vertex \(G=v\in V\setminus\lbrace R\rbrace\),
which should precisely be denoted by \(\pi=\pi_G\),
but we omit the subscript, since there is no danger of misinterpretation.
In both views, \(\pi{G}\) is the parent of \(G\).

The weight of a vertex \(G\) is realized by its \textit{nuclear rank} \(n(G)\)
\cite[\S\ 14, eqn. (28), p. 178]{Ma6}
and the weight of a directed edge \(\pi:G\to\pi{G}\) is realized by its \textit{step size}
\(s(\pi)=\log_p(\#\gamma_c{G})\)
\cite[\S\ 17, eqn. (33), p. 179]{Ma6}.
These invariants are essential for understanding the phenomenon of \textit{multifurcation} in Definition
\ref{dfn:Multifurcation}.
In particular, we can hide multifurcation by restricting all edges \(\pi\) to step size \(s(\pi)=1\),
that is,
by considering the minimal tree \(\mathcal{T}_1(v)\) instead of the entire descendant tree \(\mathcal{T}(v)\)
of a vertex \(v\in V\).
In our concrete \(p\)-group theoretic model, all vertices \(G\) of a minimal tree share a common \textit{coclass},
which is the additive complement \(\mathrm{cc}(G):=\mathrm{lo}(G)-\mathrm{cl}(G)\)
of the (nilpotency) class \(c=\mathrm{cl}(G)\) with respect to the \textit{logarithmic order}
\(\mathrm{lo}(G):=\log_p(\mathrm{ord}(G))\) of \(G\).
Generally, the logarithmic order of an immediate descendant \(G\) with parent \(\pi{G}\)
increases by the step size, \(\mathrm{lo}(G)=\mathrm{lo}(\pi{G})+s(\pi)\),
since \(\log_p(\#\pi{G})=\log_p(\#(G/\ker{\pi}))=\log_p(\#G)-\log_p(\#\ker{\pi})\).
Consequently, the coclass remains fixed in a minimal tree with \(s(\pi)=1\), since
\[\mathrm{cc}(G)=\mathrm{lo}(G)-\mathrm{cl}(G)=\mathrm{lo}(\pi{G})+1-(\mathrm{cl}(\pi{G})+1)
=\mathrm{lo}(\pi{G})-\mathrm{cl}(\pi{G})=\mathrm{cc}(\pi{G}).\]
A minimal tree \(\mathcal{T}_1(G)\) which contains a unique infinite mainline is called a \textit{coclass tree}.
It is denoted by \(\mathcal{T}^{(r)}(G):=\mathcal{T}_1(G)\)
when its root \(G\) is of coclass \(r:=\mathrm{cc}(G)\).
For further details, see
\cite[\S\ 5, p. 164]{Ma6}.

In view of the principal goals of this chapter,
we must specify our intended situation even more concretely.
We put \(p:=3\), the smallest odd prime number,
and we select as the root 
either \(R:=\langle 243,6\rangle\) or \(R:=\langle 243,8\rangle\),
characterized by its SmallGroup identifier
\cite{BEO2}.
These are metabelian \(3\)-groups of order \(\#R=243=3^5\), logarithmic order \(\mathrm{lo}(R)=5\), class \(c=3\), and coclass \(r=2\).



\subsection{Periodicity of finite patterns}
\label{ss:FirstPeriodicity}
\noindent
Within the frame of the above-mentioned model with \(p=3\)
for the theory of rooted in-trees
as developed in \S\
\ref{s:GraphTheory},
the following finiteness and periodicity statement
becomes provable.


The \textit{virtual periodicity} of depth-pruned branches of coclass trees
has been proven rigorously
with analytic methods (using zeta functions and cone integrals) by du Sautoy
\cite{dS}
in \(2000\),
and with algebraic methods (using cohomology groups) by Eick and Leedham-Green
\cite{EkLg}
in \(2008\).
We recall that a coclass tree contains a unique infinite path
of edges \(\pi\) with uniform step size \(s(\pi)=1\),
the so-called mainline.
Pattern recognition and pattern classification concerns the branches.


\begin{theorem}
\label{thm:FirstPeriodicity}
(A finite periodically repeating pattern.) \\
Among the vertices of any mainline \((v_i)_{i\ge 0}\) in \(\mathcal{T}(R)\),
there exists a \textbf{periodic root} \(v_\varrho\) with \(\varrho\ge 0\)
and a \textbf{period length} \(\lambda\ge 1\) such that the branches
\[\mathcal{B}(v_{i+\lambda})\simeq\mathcal{B}(v_i)\]
are isomorphic \textbf{finite} graphs,
for all \(i\ge\varrho\).
Up to a finite pre-periodic component, the minimal tree \(\mathcal{T}_1(v_0)\)
consists of periodically repeating copies of the finite pattern
\(\dot{\bigcup}_{i=0}^{\lambda-1}\,\mathcal{B}(v_{\varrho+i})\).
\end{theorem}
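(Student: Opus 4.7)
The plan is to follow the algebraic cohomological approach of Eick and Leedham-Green, since it naturally respects the weighted graph structure that the statement demands. The first step is to reinterpret the mainline in pro-\(p\) terms: the inverse limit \(S:=\varprojlim_i v_i\) along the mainline epimorphisms is, by the coclass theorems of Leedham-Green, Shalev and du Sautoy, a pro-\(p\) group of finite coclass \(r=\mathrm{cc}(R)\), hence a uniserial \(p\)-adic space group and in particular \(p\)-adic analytic of dimension \(r\). After a suitable reindexing one may identify \(v_i\simeq S/\gamma_{c+i}S\), so that the mainline is simply the lower central series of \(S\) read modulo the fixed coclass \(r\).

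Next I would parametrize the branch \(\mathcal{B}(v_i)=\mathcal{T}_1(v_i)\setminus\mathcal{T}_1(v_{i+1})\) cohomologically. A vertex \(G\in\mathcal{B}(v_i)\) is a finite \(p\)-group of coclass \(r\) whose mainline parent equals \(v_i\); by O'Brien's \(p\)-group generation algorithm such a \(G\) is classified, up to isomorphism, by an \(\mathrm{Aut}(v_i)\)-orbit of an allowable subgroup of the \(p\)-multiplicator of \(v_i\), equivalently a class in a bounded-rank quotient of \(H^2(v_i,\mathbb{F}_p)\). The internal edges, step sizes, and nuclear ranks inside \(\mathcal{B}(v_i)\) are likewise determined by iterating the nucleus/multiplicator construction, which depends only on \(v_i\) together with a truncation of \(S\) of bounded depth above it.

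The heart of the argument is to show that this data stabilizes periodically in \(i\). Because \(S\) is uniserial \(p\)-adic analytic, the action of \(v_i\) on the graded Lie-algebra pieces \(\gamma_{c+i+j}S/\gamma_{c+i+j+1}S\) is governed, for \(i\) sufficiently large, by a single fixed representation of the finite quotient \(S/\gamma_2 S\), and only a slice of bounded depth contributes to the \(p\)-multiplicator and nucleus of \(v_i\). Hence the isomorphism type of the relevant pair (cohomology module, group action) assumes only finitely many values as \(i\) grows, and one extracts a period \(\lambda\) together with a pre-periodic length \(\varrho\) such that \(\mathcal{B}(v_{i+\lambda})\simeq\mathcal{B}(v_i)\) as finite weighted digraphs for all \(i\ge\varrho\). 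Finiteness of each branch follows from the boundedness of the nucleus at bounded coclass.

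I expect the main obstacle to lie not in the cohomological bookkeeping itself but in the verification that an isomorphism at the level of classifying data lifts to an isomorphism of the full branches as weighted digraphs: that nuclear ranks and step sizes of every deeper descendant are preserved by the shift \(i\mapsto i+\lambda\), not merely the immediate children of \(v_i\). This reduces to showing that iterating the \(p\)-covering group construction within the minimal subtree commutes with the shift once \(i\ge\varrho\). The alternative du Sautoy route via cone integrals sidesteps this point by producing a rational generating function for isomorphism counts but supplies no explicit graph isomorphism, so the algebraic path seems indispensable for recovering the graph-level statement rather than merely a counting statement.
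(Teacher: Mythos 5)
Your proposal is essentially an attempt to re-prove the Eick--Leedham-Green virtual periodicity theorem from scratch, whereas the paper simply cites du Sautoy and Eick--Leedham-Green for that result and adds one decisive observation. The gap in your argument is exactly the point you flag at the end but do not resolve, and it is not a mere technicality: what the cited theorems give (and what your cohomological stabilization argument can at best reproduce) is periodicity of the branches \emph{pruned to a fixed depth}. Passing from this ``virtual'' periodicity to the strict statement \(\mathcal{B}(v_{i+\lambda})\simeq\mathcal{B}(v_i)\) for the \emph{full} branches requires a uniform bound on the depth of all branches, and such a bound does not follow from ``boundedness of the nucleus at bounded coclass'': bounded nuclear rank controls the width of the descendant construction (number and step sizes of immediate descendants), not the depth of the branch. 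Indeed, for larger primes the branch depths of coclass trees grow unboundedly, so eventual isomorphism of full branches is simply false in that generality; any argument that never uses \(p=3\) and never uses the specific tree under consideration cannot close this gap.

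The paper's proof supplies precisely the missing ingredient: for \(p=3\) and for the tree \(\mathcal{T}(R)\) pruned by the Artin-transfer dissection of \S\ref{sss:ArtinTransfers} (removing the complex-type brushwood and the scaffold leaves), the depth of all branches is uniformly bounded, so the depth-pruned periodicity of \cite{dS} and \cite{EkLg} already is the full statement. Your uniserial/pro-\(3\) limit setup and the multiplicator--nucleus bookkeeping are a reasonable sketch of how the cited algebraic proof works internally, but as written your argument proves only the depth-truncated version of the theorem; to repair it you would have to establish the uniform depth bound for the pruned branches of \(\mathcal{T}(R)\) (e.g.\ from Nebelung's classification of the metabelian skeleton together with the pruning rules), at which point the cohomological machinery becomes unnecessary and the proof collapses to the paper's short reduction.
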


\begin{proof}
According to
\cite{dS}
and
\cite{EkLg},
the claims are true for pruned branches with any fixed depth.
However, for \(p=3\) and under the pruning operation on \(\mathcal{T}(R)\) described in \S\
\ref{sss:ArtinTransfers},
the virtual periodicity becomes a strict periodicity,
since the depth is bounded uniformly for all branches.
\end{proof}

Before we visualize a particular instance of Theorem
\ref{thm:FirstPeriodicity}
in the diagram of Figure
\ref{fig:GraphDissection},
we have to establish techniques for disentangling dense branches of high complexity.



\subsection{Graph dissection by independent component analysis}
\label{ss:GraphDissection}


\subsubsection{Dissection by Galois action}
\label{sss:GaloisAction}
\noindent
Figure
\ref{fig:GraphDissection}
visualizes a graph dissection
of the tree \(\mathcal{T}(R)\)
by independent component analysis.
This technique drastically reduces the complexity of
visual representations and avoids overlaps of dense subgraphs.
The left hand scale gives the order of groups
whose isomorphism classes are represented by vertices of the graph.
The mainline skeleton (black) connects
branches of non-\(\sigma\) groups (red) in the left subfigure
and branches of \(\sigma\)-groups (green) in the right subfigure.
This terminology has its origin in the action of the Galois group \(\mathrm{Gal}(F/\mathbb{Q})\)
on the abelianization \(\mathfrak{M}/\mathfrak{M}^\prime\),
when a vertex of \(\mathcal{T}(R)\) is realized as
second \(3\)-class group \(\mathfrak{M}:=\mathrm{Gal}\left(F_3^{(2)}/F\right)\)
of an algebraic number field \(F\).
For quadratic fields \(F\), we obtain \(\sigma\)-groups.


\begin{definition}
\label{dfn:SigmaGroup}
A \textit{\(\sigma\)-group} \(G\) admits an automorphism \(\sigma\in\mathrm{Aut}(G)\)
acting as inversion \(\sigma(x)=x^{-1}\) on the commutator quotient \(G/G^\prime\).
\end{definition}
 

The actual graph \(\mathcal{T}(R)\) consists of the overlay (superposition) of both subfigures in Figure
\ref{fig:GraphDissection}.
Infinite mainlines are indicated by arrows.
The periodic bifurcations form an infinite path
with edges of alternating step sizes \(1\) and \(2\),
according to Theorem
\ref{thm:SecondPeriodicity}.
We call it the \textit{maintrunk}.



\begin{figure}[hb]
\caption{Graph dissection into pruned branches connected by the mainline skeleton}
\label{fig:GraphDissection}

\input{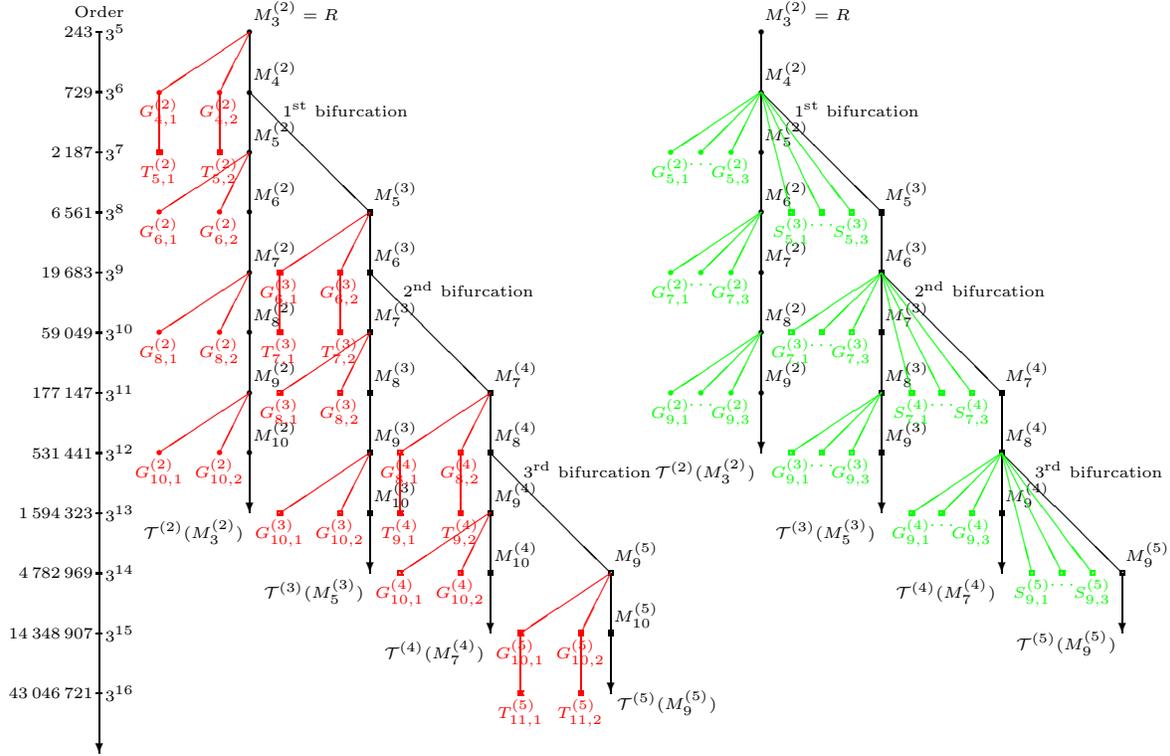}

\end{figure}


With the aid of Figure
\ref{fig:GraphDissection},
a particular instance of Theorem
\ref{thm:FirstPeriodicity}
can be expressed in a more concrete and ostensive way
by taking the tree root as the ending vertex \(v_0:=R\) of the mainline \((v_i)_{i\ge 0}\),
and by using the variable class \(c\ge 3\) and the fixed coclass \(r=2\)
as parameters describing all mainline vertices \(M_c^{(r)}:=v_{c-3}\).
The periodic root is \(M_5^{(2)}=v_{2}\) with \(\varrho=2\) and the period length is \(\lambda=2\).
The finite periodic pattern consists of the two branches
\(\mathcal{B}(M_5^{(2)})=\lbrace M_5^{(2)},G_{6,1}^{(2)},G_{6,2}^{(2)}\rbrace\) (red)
and
\(\mathcal{B}(M_6^{(2)})=\lbrace M_6^{(2)},G_{7,1}^{(2)},G_{7,2}^{(2)},G_{7,3}^{(2)}\rbrace\) (green).
The pre-period is irregular and consists of the two branches
\(\mathcal{B}(M_3^{(2)})=\lbrace M_3^{(2)},G_{4,1}^{(2)},G_{4,2}^{(2)},T_{5,1}^{(2)},T_{5,2}^{(2)}\rbrace\) (red)
and
\(\mathcal{B}(M_4^{(2)})=\lbrace M_4^{(2)},G_{5,1}^{(2)},G_{5,2}^{(2)},G_{5,3}^{(2)}\rbrace\) (green).
But \(M_4^{(2)}\) is not coclass-settled, has nuclear rank \(n=2\), and gives rise to a bifurcation
with immediate descendants \(S_{5,1}^{(3)},S_{5,2}^{(3)},S_{5,3}^{(3)},M_5^{(3)}\) (green) of step size \(s=2\).



\subsubsection{Dissection by Artin transfers}
\label{sss:ArtinTransfers}


{\tiny

\vspace{0.2in}

\begin{figure}[ht]
\caption{Coclass tree \(\mathcal{T}^{(2)}(\langle 243,8\rangle)\) with simple, scaffold and complex types}
\label{fig:TreeUMinDisc}


\setlength{\unitlength}{0.9cm}
\begin{picture}(17,22.5)(-9,-21.5)

\put(-8,0.5){\makebox(0,0)[cb]{Order}}
\put(-8,0){\line(0,-1){20}}
\multiput(-8.1,0)(0,-2){11}{\line(1,0){0.2}}
\put(-8.2,0){\makebox(0,0)[rc]{\(243\)}}
\put(-7.8,0){\makebox(0,0)[lc]{\(3^5\)}}
\put(-8.2,-2){\makebox(0,0)[rc]{\(729\)}}
\put(-7.8,-2){\makebox(0,0)[lc]{\(3^6\)}}
\put(-8.2,-4){\makebox(0,0)[rc]{\(2\,187\)}}
\put(-7.8,-4){\makebox(0,0)[lc]{\(3^7\)}}
\put(-8.2,-6){\makebox(0,0)[rc]{\(6\,561\)}}
\put(-7.8,-6){\makebox(0,0)[lc]{\(3^8\)}}
\put(-8.2,-8){\makebox(0,0)[rc]{\(19\,683\)}}
\put(-7.8,-8){\makebox(0,0)[lc]{\(3^9\)}}
\put(-8.2,-10){\makebox(0,0)[rc]{\(59\,049\)}}
\put(-7.8,-10){\makebox(0,0)[lc]{\(3^{10}\)}}
\put(-8.2,-12){\makebox(0,0)[rc]{\(177\,147\)}}
\put(-7.8,-12){\makebox(0,0)[lc]{\(3^{11}\)}}
\put(-8.2,-14){\makebox(0,0)[rc]{\(531\,441\)}}
\put(-7.8,-14){\makebox(0,0)[lc]{\(3^{12}\)}}
\put(-8.2,-16){\makebox(0,0)[rc]{\(1\,594\,323\)}}
\put(-7.8,-16){\makebox(0,0)[lc]{\(3^{13}\)}}
\put(-8.2,-18){\makebox(0,0)[rc]{\(4\,782\,969\)}}
\put(-7.8,-18){\makebox(0,0)[lc]{\(3^{14}\)}}
\put(-8.2,-20){\makebox(0,0)[rc]{\(14\,348\,907\)}}
\put(-7.8,-20){\makebox(0,0)[lc]{\(3^{15}\)}}
\put(-8,-20){\vector(0,-1){2}}

\put(-6,0.5){\makebox(0,0)[cb]{\(\tau(1)=\)}}
\put(-6,0){\makebox(0,0)[cc]{\((21)\)}}
\put(-6,-2){\makebox(0,0)[cc]{\((2^2)\)}}
{\color{red}
\put(-6,-4){\makebox(0,0)[cc]{\((32)\)}}
}
\put(-6,-6){\makebox(0,0)[cc]{\((3^2)\)}}
{\color{red}
\put(-6,-8){\makebox(0,0)[cc]{\((43)\)}}
}
\put(-6,-10){\makebox(0,0)[cc]{\((4^2)\)}}
{\color{red}
\put(-6,-12){\makebox(0,0)[cc]{\((54)\)}}
}
\put(-6,-14){\makebox(0,0)[cc]{\((5^2)\)}}
\put(-6,-16){\makebox(0,0)[cc]{\((65)\)}}
\put(-6,-18){\makebox(0,0)[cc]{\((6^2)\)}}
\put(-6,-20){\makebox(0,0)[cc]{\((76)\)}}
\put(-6,-21){\makebox(0,0)[cc]{\textbf{TTT}}}
\put(-6.5,-21.2){\framebox(1,22){}}

\put(7.6,-7){\vector(0,1){3}}
\put(7.8,-7){\makebox(0,0)[lc]{depth \(3\)}}
\put(7.6,-7){\vector(0,-1){3}}

\put(-3.1,-8){\vector(0,1){2}}
\put(-3.3,-8){\makebox(0,0)[rc]{period length \(2\)}}
\put(-3.1,-8){\vector(0,-1){2}}

\put(0.7,-2){\makebox(0,0)[lc]{bifurcation from}}
\put(0.7,-2.3){\makebox(0,0)[lc]{\(\mathcal{G}(3,2)\) to \(\mathcal{G}(3,3)\)}}

\multiput(0,0)(0,-2){10}{\circle*{0.2}}
\multiput(0,0)(0,-2){9}{\line(0,-1){2}}
\multiput(-1,-2)(0,-4){5}{\circle*{0.2}}
{\color{red}
\multiput(-1,-4)(0,-4){3}{\circle*{0.2}}
}
\multiput(-1,-16)(0,-4){2}{\circle*{0.2}}
\multiput(-2,-2)(0,-2){10}{\circle*{0.2}}
\multiput(1.95,-4.05)(0,-2){9}{\framebox(0.1,0.1){}}
\multiput(3,-2)(0,-2){10}{\circle*{0.2}}
\multiput(0,0)(0,-2){10}{\line(-1,-2){1}}
\multiput(0,0)(0,-2){10}{\line(-1,-1){2}}
\multiput(0,-2)(0,-2){9}{\line(1,-1){2}}
\multiput(0,0)(0,-2){10}{\line(3,-2){3}}
\multiput(-3.05,-4.05)(-1,0){2}{\framebox(0.1,0.1){}}
\multiput(3.95,-6.05)(0,-2){8}{\framebox(0.1,0.1){}}
\multiput(5,-6)(0,-2){8}{\circle*{0.1}}
\multiput(6,-4)(0,-2){9}{\circle*{0.1}}
\multiput(-1,-2)(-1,0){2}{\line(-1,-1){2}}
\multiput(3,-4)(0,-2){8}{\line(1,-2){1}}
\multiput(3,-4)(0,-2){8}{\line(1,-1){2}}
\multiput(3,-2)(0,-2){9}{\line(3,-2){3}}
\multiput(6.95,-6.05)(0,-2){8}{\framebox(0.1,0.1){}}
\multiput(6,-4)(0,-2){8}{\line(1,-2){1}}

\put(2,-0.5){\makebox(0,0)[lc]{branch}}
\put(2,-0.8){\makebox(0,0)[lc]{\(\mathcal{B}(5)\)}}
\put(2,-2.8){\makebox(0,0)[lc]{\(\mathcal{B}(6)\)}}
\put(2,-4.8){\makebox(0,0)[lc]{\(\mathcal{B}(7)\)}}
\put(2,-6.8){\makebox(0,0)[lc]{\(\mathcal{B}(8)\)}}
\put(2,-8.8){\makebox(0,0)[lc]{\(\mathcal{B}(9)\)}}
\put(2,-10.8){\makebox(0,0)[lc]{\(\mathcal{B}(10)\)}}
\put(2,-12.8){\makebox(0,0)[lc]{\(\mathcal{B}(11)\)}}
\put(2,-14.8){\makebox(0,0)[lc]{\(\mathcal{B}(12)\)}}
\put(2,-16.8){\makebox(0,0)[lc]{\(\mathcal{B}(13)\)}}
\put(2,-18.8){\makebox(0,0)[lc]{\(\mathcal{B}(14)\)}}

\put(-0.1,0.3){\makebox(0,0)[rc]{\(R=\langle 8\rangle\)}}
\put(-2.1,-1.8){\makebox(0,0)[rc]{\(\langle 53\rangle\)}}
\put(-1.1,-1.8){\makebox(0,0)[rc]{\(\langle 55\rangle\)}}
\put(0.1,-1.8){\makebox(0,0)[lc]{\(\langle 54\rangle\)}}
\put(3.1,-1.8){\makebox(0,0)[lc]{\(\langle 52\rangle\)}}
\put(-4.1,-3.5){\makebox(0,0)[cc]{\(\langle 300\rangle\)}}
\put(-3.1,-3.5){\makebox(0,0)[cc]{\(\langle 309\rangle\)}}
\put(-2.1,-3.3){\makebox(0,0)[cc]{\(\langle 302\rangle\)}}
\put(-2.1,-3.5){\makebox(0,0)[cc]{\(\langle 306\rangle\)}}
{\color{red}
\put(-1.1,-3.5){\makebox(0,0)[cc]{\(\langle 304\rangle\)}}
}
\put(0.1,-3.5){\makebox(0,0)[lc]{\(\langle 303\rangle\)}}
\put(2.2,-3.3){\makebox(0,0)[cc]{\(\langle 307\rangle\)}}
\put(2.2,-3.5){\makebox(0,0)[cc]{\(\langle 308\rangle\)}}
\put(3.2,-3.3){\makebox(0,0)[cc]{\(\langle 301\rangle\)}}
\put(3.2,-3.5){\makebox(0,0)[cc]{\(\langle 305\rangle\)}}
\put(6.2,-2.5){\makebox(0,0)[cc]{\(\langle 294\rangle\)}}
\put(6.2,-2.7){\makebox(0,0)[cc]{\(\langle 295\rangle\)}}
\put(6.2,-2.9){\makebox(0,0)[cc]{\(\langle 296\rangle\)}}
\put(6.2,-3.1){\makebox(0,0)[cc]{\(\langle 297\rangle\)}}
\put(6.2,-3.3){\makebox(0,0)[cc]{\(\langle 298\rangle\)}}
\put(6.2,-3.5){\makebox(0,0)[cc]{\(\langle 299\rangle\)}}

\put(-2.1,-5.8){\makebox(0,0)[lc]{\(\langle 2053\rangle\)}}
\put(-1.1,-5.8){\makebox(0,0)[lc]{\(\langle 2051\rangle\)}}
\put(0.1,-5.8){\makebox(0,0)[lc]{\(\langle 2050\rangle\)}}
\put(3.1,-5.8){\makebox(0,0)[lc]{\(\langle 2052\rangle\)}}

{\color{red}
\put(-1.1,-7.8){\makebox(0,0)[lc]{\(\#1;2\)}}
}
\put(0.1,-7.8){\makebox(0,0)[lc]{\(\#1;1\)}}
\put(0.1,-9.8){\makebox(0,0)[lc]{\(\#1;1\)}}
{\color{red}
\put(-1.1,-11.8){\makebox(0,0)[lc]{\(\#1;2\)}}
}

\put(2.1,-3.8){\makebox(0,0)[lc]{\(*2\)}}
\multiput(-2.1,-3.8)(0,-4){5}{\makebox(0,0)[rc]{\(2*\)}}
\multiput(3.1,-3.8)(0,-4){5}{\makebox(0,0)[lc]{\(*2\)}}
\put(6.1,-3.8){\makebox(0,0)[lc]{\(*6\)}}
\multiput(5.1,-5.8)(0,-4){4}{\makebox(0,0)[lc]{\(*2\)}}
\multiput(5.5,-5.3)(0,-4){4}{\makebox(0,0)[lc]{\(\#4\)}}
\multiput(6.1,-5.8)(0,-4){4}{\makebox(0,0)[lc]{\(*2\)}}
\multiput(5.1,-7.8)(0,-4){4}{\makebox(0,0)[lc]{\(*3\)}}
\multiput(6.1,-7.8)(0,-4){4}{\makebox(0,0)[lc]{\(*3\)}}
\multiput(7.1,-7.8)(0,-2){4}{\makebox(0,0)[lc]{\(*2\)}}

\put(-3,-21){\makebox(0,0)[cc]{\textbf{TKT}}}
\put(-2,-21){\makebox(0,0)[cc]{E.9}}
{\color{red}
\put(-1,-21){\makebox(0,0)[cc]{E.8}}
}
\put(0,-21){\makebox(0,0)[cc]{c.21}}
\put(2,-21){\makebox(0,0)[cc]{c.21}}
\put(3.1,-21){\makebox(0,0)[cc]{G.16}}
\put(4,-21){\makebox(0,0)[cc]{G.16}}
\put(5,-21){\makebox(0,0)[cc]{G.16}}
\put(6,-21){\makebox(0,0)[cc]{G.16}}
\put(7,-21){\makebox(0,0)[cc]{G.16}}
\put(-3,-21.5){\makebox(0,0)[cc]{\(\varkappa=\)}}
\put(-2,-21.5){\makebox(0,0)[cc]{\((2231)\)}}
{\color{red}
\put(-1,-21.5){\makebox(0,0)[cc]{\((1231)\)}}
}
\put(0,-21.5){\makebox(0,0)[cc]{\((0231)\)}}
\put(2,-21.5){\makebox(0,0)[cc]{\((0231)\)}}
\put(3.1,-21.5){\makebox(0,0)[cc]{\((4231)\)}}
\put(4,-21.5){\makebox(0,0)[cc]{\((4231)\)}}
\put(5,-21.5){\makebox(0,0)[cc]{\((4231)\)}}
\put(6,-21.5){\makebox(0,0)[cc]{\((4231)\)}}
\put(7,-21.5){\makebox(0,0)[cc]{\((4231)\)}}
\put(-3.8,-21.7){\framebox(11.6,1){}}

\put(0,-18){\vector(0,-1){2}}
\put(0.2,-19.4){\makebox(0,0)[lc]{infinite}}
\put(0.2,-19.9){\makebox(0,0)[lc]{mainline}}
\put(1.8,-20.4){\makebox(0,0)[rc]{\(\mathcal{T}^2(\langle 243,8\rangle)\)}}


\multiput(0,-2)(0,-4){3}{\oval(1,1)}
\put(0.1,-1.3){\makebox(0,0)[lc]{\underbar{\textbf{+540\,365}}}}
\put(0.1,-5.3){\makebox(0,0)[lc]{\underbar{\textbf{+1\,001\,957}}}}
\put(0.1,-9.3){\makebox(0,0)[lc]{\underbar{\textbf{+116\,043\,324}}}}

{\color{red}
\multiput(-1,-4)(0,-4){3}{\oval(1,1)}
\put(-1,-4.8){\makebox(0,0)[lc]{\underbar{\textbf{-34\,867}}}}
\put(-1,-8.8){\makebox(0,0)[lc]{\underbar{\textbf{-370\,740}}}}
\put(-1,-12.8){\makebox(0,0)[lc]{\underbar{\textbf{-4\,087\,295}}}}
}
\put(-1,-16){\oval(1,1)}
\put(-1,-16.8){\makebox(0,0)[lc]{\underbar{\textbf{-19\,027\,947}}}}
\multiput(-2,-4)(0,-4){5}{\oval(1,1)}
\put(-2,-4.8){\makebox(0,0)[rc]{\underbar{\textbf{-9\,748}}}}
\put(-2,-8.8){\makebox(0,0)[rc]{\underbar{\textbf{-297\,079}}}}
\put(-2,-12.8){\makebox(0,0)[rc]{\underbar{\textbf{-1\,088\,808}}}}
\put(-2,-16.8){\makebox(0,0)[rc]{\underbar{\textbf{-11\,091\,140}}}}
\put(-2,-19.3){\makebox(0,0)[rc]{\underbar{\textbf{-94\,880\,548}}}}
\multiput(6,-6)(0,-4){4}{\oval(1,1)}
\put(6,-6.8){\makebox(0,0)[rc]{\underbar{\textbf{-17\,131}}}}
\put(6,-10.8){\makebox(0,0)[rc]{\underbar{\textbf{-819\,743}}}}
\put(6,-14.8){\makebox(0,0)[rc]{\underbar{\textbf{-2\,244\,399}}}}
\put(6,-18.8){\makebox(0,0)[rc]{\underbar{\textbf{-30\,224\,744}}}}

\end{picture}

\end{figure}
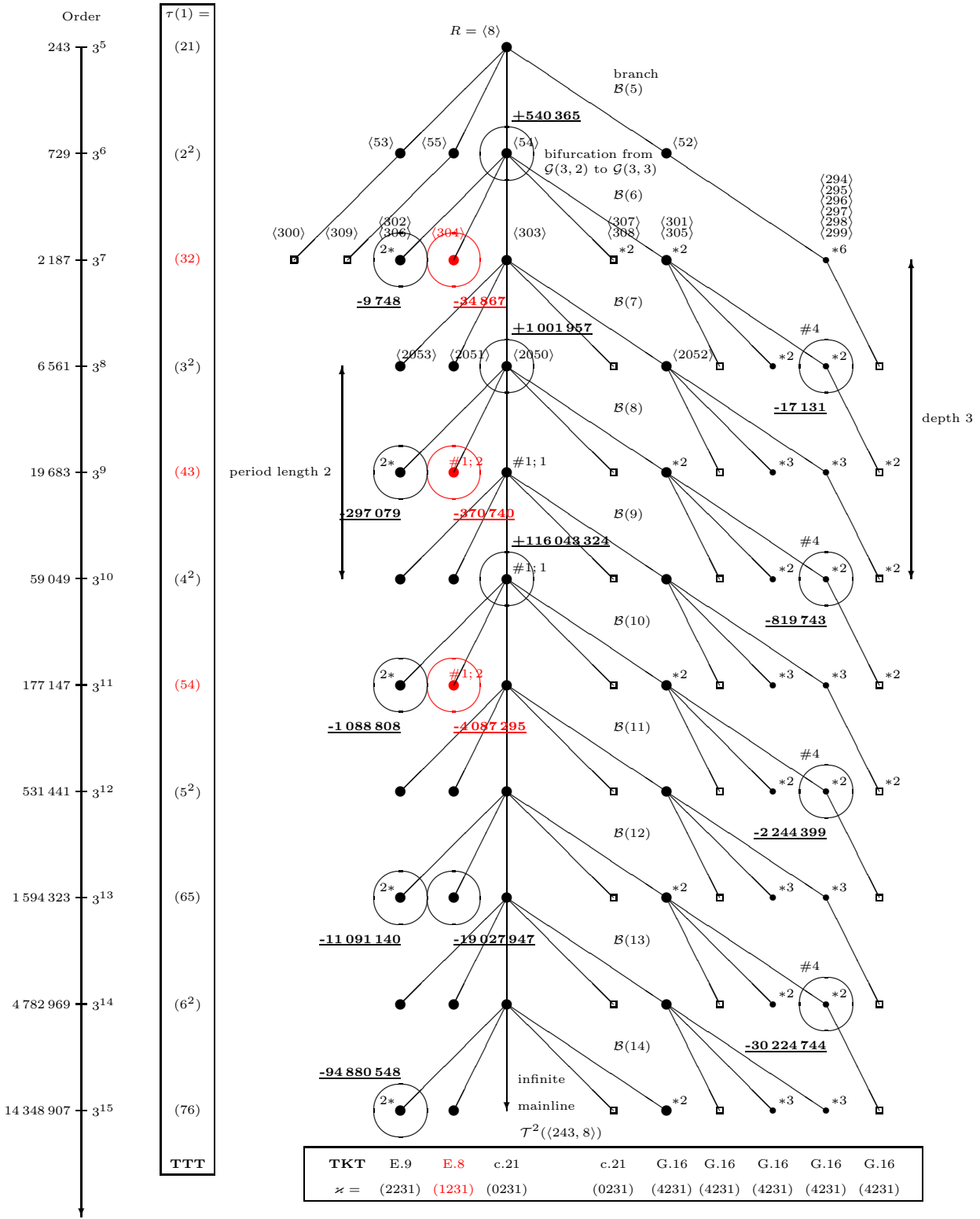

}

In Figure
\ref{fig:GraphDissection},
we have tacitly used a second technique of graph dissection
by independent component analysis. Figure
\ref{fig:TreeUMinDisc}
is restricted to the coclass tree \(\mathcal{T}^{(2)}(R)\) with exemplary root \(R=\langle 243,8\rangle\),
which is the leftmost coclass tree in both subfigures of Figure
\ref{fig:GraphDissection}.
However, now this coclass tree is drawn completely up to logarithmic order \(15\),
containing both, non-\(\sigma\) branches and \(\sigma\)-branches.
The tree is embedded in a kind of coordinate system
having the transfer kernel type (TKT) \(\varkappa\) as its horizontal axis
and the first component \(\tau(1)\) of the transfer target type (TTT) \(\tau\) as its vertical axis
\cite[Dfn. 4.2, p. 27]{Ma10}.
It is convenient to employ a second graph dissection, according to three fundamental types of transfer kernels:
\begin{itemize}
\item
the vertices with \textit{simple} types \(\mathrm{E}.8\), \(\varkappa=(1231)\), and \(\mathrm{E}.9\), \(\varkappa=(2231)\),
which are leaves (left of the mainline), except those of order \(3^6\),
\item
the vertices with \textit{scaffold} (or skeleton) type \(\mathrm{c}.21\), \(\varkappa=(0231)\),
which are either infinitely capable mainline vertices or non-metabelian leaves (immediately right of the mainline),
\item
the vertices with \textit{complex} type \(\mathrm{G}.16\), \(\varkappa=(4231)\),
which are capable at depth \(1\) and give rise to a complicated brushwood of various descendants (right of the mainline).
\end{itemize}
The tacit omission in Figure
\ref{fig:GraphDissection}
concerns all vertices with complex type and the leaves with scaffold type.
Our main results in this chapter will shed complete light on
all mainline vertices and the vertices with simple types.
Underlined boldface integers in Figure
\ref{fig:TreeUMinDisc}
indicate the minimal discriminants \(d\)
of (real and imaginary) quadratic fields \(F=\mathbb{Q}(\sqrt{d})\)
whose second \(3\)-class group \(\mathrm{G}_3^{(2)}{F}:=\mathrm{Gal}\left(F_3^{(2)}/F\right)\)
realizes the vertex surrounded by the adjacent oval.
Three leaves of type \(\mathrm{E}.8\) are drawn with red color,
because they will be referred to in Theorem
\ref{thm:3ClassTowerLength3}
on \(3\)-class towers.



\subsection{Periodicity of infinite patterns}
\label{ss:Periodicity}
\noindent
With the aid of a combination of top down and bottom up techniques,
we are now going to provide evidence of
a new kind of \textit{periodic bifurcations}
in pruned descendant trees
which contain a unique infinite path of edges \(\pi\)
with strictly alternating step sizes \(s(\pi)=1\) and \(s(\pi)=2\),
the so-called \textit{maintrunk}.
It is very important that the trees are \textit{pruned}
in the sense explained at the end of the preceding section \S\
\ref{sss:ArtinTransfers},
for otherwise the maintrunk will not be unique.
In fact, each of our pruned descendant trees \(\mathcal{T}(R)\)
is a countable disjoint union of pruned coclass trees \(\mathcal{T}^{(r)}\), \(r\ge 2\),
which are isomorphic as infinite graphs and connected by edges of weight \(2\),
and finite batches \(\mathcal{T}_0^{(r)}\), \(r\ge 3\), of sporadic vertices outside of coclass trees. 
The top down and bottom up techniques are implemented simultaneously
in two recursive Algorithms
\ref{alg:Mainlines}
and
\ref{alg:ShafarevichCover}.

The first Algorithm
\ref{alg:Mainlines}
recursively constructs the mainline vertices \(M_{c}^{(r)}\), with class \(c\ge 2r-1\),
of the coclass tree \(\mathcal{T}^{(r)}\subset\mathcal{T}(R)\),
for an assigned value \(r\ge 2\),
by means of the \textit{bottom up} technique.
In each recursion step,
the \textit{top down} technique is used for constructing the class-\(c\) quotient \(\mathcal{L}_c^{(r)}\)
of an infinite limit group \(\mathcal{L}^{(r)}\).
Finally, the isomorphism \(M_{c}^{(r)}\simeq\mathcal{L}_c^{(r)}\) is proved.


\begin{theorem}
\label{thm:SecondPeriodicity}
(An infinite periodically repeating pattern.) \quad Let \(u_r=30\) be an upper bound.
An infinite path is generated recursively, since for each \(2\le r<u_r\),
the immediate descendant \(M_{2r+1}^{(r+1)}=M_{2r}^{(r)}-\#2;1\) of step size \(2\)
of the second mainline vertex \(M_{2r}^{(r)}\) of the coclass tree \(\mathcal{T}^{(r)}(M_{2r-1}^{(r)})\),
is root of a new coclass tree \(\mathcal{T}^{(r+1)}(M_{2(r+1)-1}^{(r+1)})\).
The pruned coclass trees
\[\mathcal{T}^{(r)}(M_{2r-1}^{(r)})\simeq\mathcal{T}^{(2)}(M_3^{(2)})\]
are isomorphic \textbf{infinite} graphs,
for each \(2\le r\le u_r\). Note that the nuclear rank \(n(M_{2r}^{(r)})=2\).
\end{theorem}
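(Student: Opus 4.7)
The plan is to argue by induction on the coclass $r$, using Algorithms \ref{alg:Mainlines} and \ref{alg:ShafarevichCover} in the parallel bottom-up/top-down manner outlined just before the statement. The base case $r=2$ is simply the identification of $\mathcal{T}^{(2)}(M_3^{(2)})$ with itself, already exhibited in Figure \ref{fig:TreeUMinDisc}. For the inductive step, suppose the assertion holds up to some $r<u_r$; the goal is to produce the next coclass tree $\mathcal{T}^{(r+1)}(M_{2r+1}^{(r+1)})$, verify that it arises via a step-$2$ edge from $M_{2r}^{(r)}$, and show it is isomorphic (as an infinite pruned graph) to $\mathcal{T}^{(2)}(M_3^{(2)})$.

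First I would construct $M_{2r}^{(r)}=M_{2r-1}^{(r)}-\#1;1$ by the bottom-up half of Algorithm \ref{alg:Mainlines}, and then compute its nuclear rank. Realizing $M_{2r}^{(r)}$ as the class-$2r$ quotient $\mathcal{L}^{(r)}_{2r}$ of the pro-$3$ limit group $\mathcal{L}^{(r)}$ (the top-down half), one reads off a pro-$3$ presentation and applies the $p$-group generation algorithm of O'Brien to the pair (relation rank, $p$-multiplicator rank). The outcome is $n(M_{2r}^{(r)})=2$, which is precisely the final clause of the theorem and legitimises the bifurcation. The first immediate descendant of weight $s=2$ is then the designated vertex $M_{2r+1}^{(r+1)}:=M_{2r}^{(r)}-\#2;1$; a direct check gives $\mathrm{lo}(M_{2r+1}^{(r+1)})=2r+3$ and $\mathrm{cl}(M_{2r+1}^{(r+1)})=2r+1$, so its coclass is $r+1$ as required, and a second nuclear-rank computation shows $n(M_{2r+1}^{(r+1)})=1$, so that it is the root of a genuine coclass tree.

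The main obstacle is the tree isomorphism $\mathcal{T}^{(r+1)}(M_{2r+1}^{(r+1)})\simeq\mathcal{T}^{(2)}(M_3^{(2)})$ as infinite pruned graphs. My approach is to compare the two limit groups $\mathcal{L}^{(r+1)}$ and $\mathcal{L}^{(2)}$ by producing, via the recursive definition, a uniform pro-$3$ presentation whose only $r$-dependent data sit in the relations that fix the coclass. Compatibility of the induced filtrations then yields a mainline-preserving isomorphism $\mathcal{L}^{(r+1)}_c\simeq\mathcal{L}^{(2)}_c$ for every class $c$. With the mainlines identified, Theorem \ref{thm:FirstPeriodicity} applied on the $\mathcal{T}^{(2)}$ side bounds the depth of every branch uniformly, so the isomorphism of infinite trees reduces to matching the finitely many branch types at each level; this matching is carried out by comparing the capable-descendant lists produced by Algorithm \ref{alg:ShafarevichCover} on both sides.

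Because the uniform bound on branch depth comes from the pruning operation described in \S\ref{sss:ArtinTransfers}, only finitely many descendant computations are needed per level, and the induction closes at each $r<u_r=30$ by an explicit (computer-assisted) verification. The step from $r$ to $r+1$ thus simultaneously exhibits the new edge of weight $2$ in the maintrunk and certifies the graph isomorphism, completing the inductive passage and hence the theorem within the stated range.
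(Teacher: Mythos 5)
Your overall strategy (bottom-up construction along the maintrunk, top-down identification with quotients of the limit group, computer-assisted verification up to the bound, and an appeal to Theorem \ref{thm:FirstPeriodicity} to reduce the infinite trees to finitely many branch checks) is in the same spirit as the paper, which itself defers the proof to Theorem \ref{thm:Mainlines} (MAGMA identification \(M_c^{(r)}\simeq\mathcal{L}^{(r)}_{\pm,c}\) via Algorithm \ref{alg:Mainlines} for \(r\le 8\), \(c\le 35\)), Remark \ref{rmk:MainlineConjecture} (pure bottom-up computations reaching \(r=32\), which is what covers \(u_r=30\)), and Theorem \ref{thm:FirstPeriodicity} for the branches. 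However, your key step for the tree isomorphism contains a genuine error: you claim ``a mainline-preserving isomorphism \(\mathcal{L}^{(r+1)}_c\simeq\mathcal{L}^{(2)}_c\) for every class \(c\).'' These finite quotients cannot be isomorphic: a mainline vertex of class \(c\) and coclass \(r+1\) has logarithmic order \(c+r+1\), whereas the coclass-\(2\) vertex of the same class has logarithmic order \(c+2\). The asserted graph isomorphism \(\mathcal{T}^{(r)}(M_{2r-1}^{(r)})\simeq\mathcal{T}^{(2)}(M_3^{(2)})\) is purely combinatorial and necessarily shifts the class (root \(M_{2r-1}^{(r)}\) of order \(3^{3r-1}\) corresponds to root \(M_3^{(2)}\) of order \(3^5\)); corresponding vertices are \emph{not} isomorphic groups, so no ``uniform presentation'' argument can produce group isomorphisms between the two towers of quotients. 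The correct identification, used in the paper, is \(M_c^{(r)}\simeq\mathcal{L}^{(r)}_{\pm,c}\) \emph{within a fixed coclass} \(r\), and the cross-coclass comparison of the pruned trees is then established level by level by descendant computations, not by identifying the limit quotients across different \(r\).

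A second, smaller misstep: you propose to match the branches of \(\mathcal{T}^{(r+1)}\) and \(\mathcal{T}^{(2)}\) ``by comparing the capable-descendant lists produced by Algorithm \ref{alg:ShafarevichCover}.'' That algorithm does something else entirely: it navigates the mainline of the \emph{coclass-\(2\)} tree and identifies covers (second derived quotients of quotients of \(\mathcal{C}^{(e)}\)) of metabelian leaves of type E; it does not enumerate branches of higher-coclass trees. The branch matching has to come from the pruning of \S\ \ref{sss:ArtinTransfers} (which bounds the depth uniformly), Theorem \ref{thm:FirstPeriodicity}, and the descendant computations of Algorithm \ref{alg:Mainlines} together with the TKT filter \texttt{IsAdmissible()}. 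Also note that being the root of a coclass tree requires a unique infinite mainline, which is again a consequence of the computational identification with \(\mathcal{L}^{(r+1)}_{\pm,c}\) for all \(c\) (plus coclass theory), not merely of the single computation \(n(M_{2r+1}^{(r+1)})=1\). With these corrections your inductive, computer-assisted scheme would align with the paper's argument; as written, the central isomorphism claim is false and the branch-matching mechanism is misattributed.
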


\noindent
This is the first main theorem of the present chapter.
The proof will be conducted with the aid of an infinite limit group \(\mathcal{L}_{\pm}\),
due to M. F. Newman.
Certain quotients of \(\mathcal{L}_{\pm}\) give precisely the mainline vertices \(M_c^{(r)}\)
with \(r\ge 2\) and \(c\ge 2r-1\),
as will be shown in Theorem
\ref{thm:Mainlines},
Remark
\ref{rmk:MainlineConjecture}.

\begin{conjecture}
\label{cnj:SecondPeriodicity}
Theorem
\ref{thm:SecondPeriodicity}
remains true for any upper bound \(u_r>30\).
\end{conjecture}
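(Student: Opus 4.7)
The plan is to remove the computational ceiling $u_r = 30$ by replacing the case-by-case verification (which presumably underlies the bound $30$) with a uniform pro-$p$ argument based on the Newman limit group $\mathcal{L}_\pm$ mentioned in the text. The strategy is inductive on $r$: assuming the coclass tree $\mathcal{T}^{(r)}(M_{2r-1}^{(r)})$ has the predicted shape and that $M_{2r}^{(r)}$ is coclass-settled only up to nuclear rank $2$, construct the next coclass tree $\mathcal{T}^{(r+1)}(M_{2r+1}^{(r+1)})$ from the descendant $M_{2r}^{(r)} - \#2;1$, and verify that the isomorphism of infinite graphs claimed in Theorem \ref{thm:SecondPeriodicity} carries over to step $r+1$.

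The key steps, in order, are the following. First, identify the mainline vertices $M_c^{(r)}$ with the class-$c$ quotients $\mathcal{L}_\pm/\gamma_c\mathcal{L}_\pm$ of Newman's infinite pro-$3$ limit group, which is anticipated in the forthcoming Theorem \ref{thm:Mainlines} and Remark \ref{rmk:MainlineConjecture}; this identification must be shown to hold for \emph{all} $r \ge 2$, not merely $r \le 30$, which requires exhibiting an explicit pro-$3$ presentation of $\mathcal{L}_\pm$ whose lower central quotients of logarithmic order $2r+1$ and coclass $r$ give exactly the second mainline vertex $M_{2r}^{(r)}$. Second, compute the $p$-multiplicator rank and nuclear rank of these quotients by lifting the $p$-covering group construction to the pro-$3$ setting: the claim $n(M_{2r}^{(r)}) = 2$ reduces to showing that the relation module of $\mathcal{L}_\pm$ has exactly two independent generators that survive modulo $\gamma_{2r+1}\mathcal{L}_\pm$. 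Third, apply Theorem \ref{thm:FirstPeriodicity} and the virtual periodicity of du Sautoy and Eick--Leedham-Green to each individual pruned coclass tree $\mathcal{T}^{(r)}$ to obtain the shape of its branches from the finite periodic pattern; since all these coclass trees arise as projections of the \emph{same} limit group, the periodic patterns must coincide, yielding the graph isomorphism $\mathcal{T}^{(r)}(M_{2r-1}^{(r)}) \simeq \mathcal{T}^{(2)}(M_3^{(2)})$ uniformly in $r$. Fourth, verify that the step-size-$2$ edge $M_{2r}^{(r)} \to M_{2r+1}^{(r+1)}$ produced at each bifurcation is characteristic, so that the maintrunk is well-defined and the recursion closes.

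The main obstacle is the second step: establishing the nuclear rank identity $n(M_{2r}^{(r)}) = 2$ uniformly for all $r \ge 2$. For small $r$ this can be checked by the $p$-group generation algorithm and SmallGroups data, which is presumably exactly what supports the value $u_r = 30$ in Theorem \ref{thm:SecondPeriodicity}. For unbounded $r$ one must instead compute the Schur multiplier $H_2(M_{2r}^{(r)}, \mathbb{F}_3)$ from a pro-$3$ presentation of $\mathcal{L}_\pm$ and show its rank stabilises. This is exactly the type of cohomological stability result proved by Eick--Leedham-Green for fixed coclass, but here the coclass $r$ itself varies along the maintrunk, so the result cannot be invoked directly; a genuinely new argument, tracking how the cohomology of the truncations $\mathcal{L}_\pm/\gamma_c\mathcal{L}_\pm$ behaves as both $c$ and $r$ increase together along the maintrunk, appears to be required.

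Once that obstacle is cleared, the remaining steps (the isomorphism of pruned coclass trees and the characteristic nature of the step-size-$2$ edges) follow by the same arguments that establish Theorem \ref{thm:SecondPeriodicity} in the bounded range, so no further essential difficulty is expected beyond the cohomological stability of the Newman limit group.
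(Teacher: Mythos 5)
The statement you are asked to prove is stated in the paper as a \emph{conjecture}: the paper itself offers no proof, only computational evidence (executions of Algorithm \ref{alg:Mainlines} up to the stated bounds, and the bottom-up runs mentioned in Remark \ref{rmk:MainlineConjecture} reaching coclass about \(32\)). Your text is likewise not a proof but a programme, and you concede this yourself: the decisive step, establishing \(n\bigl(M_{2r}^{(r)}\bigr)=2\) uniformly for all \(r\), i.e.\ the stability of the \(3\)-multiplicator and nucleus of the truncations along the maintrunk as both \(c\) and \(r\) grow, is exactly the point where the known coclass machinery (du Sautoy, Eick--Leedham-Green) does not apply, and you leave it open. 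Since that is the entire content of extending Theorem \ref{thm:SecondPeriodicity} beyond \(u_r=30\), the proposal has a genuine gap at its core; what you have written is a plausible reduction of the conjecture to a cohomological stability statement about \(\mathcal{L}_{\pm}\), not a proof of it.

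Two further points in your outline would need repair even if that obstacle were cleared. First, your identification of the mainline vertices uses lower-central quotients \(\mathcal{L}_{\pm}/\gamma_c\mathcal{L}_{\pm}\), whereas the paper's candidates are the quotients \(\mathcal{L}^{(r)}_{\pm,c}\) of \(\mathcal{L}^{(r)}_{\pm}=\mathcal{L}_{\pm}/\langle a^{3^r}\rangle\) in Definition \ref{dfn:Mainlines}; these depend on \(r\), so the different coclass trees are \emph{not} projections of literally the same group, and your third step --- inferring the graph isomorphism \(\mathcal{T}^{(r)}(M_{2r-1}^{(r)})\simeq\mathcal{T}^{(2)}(M_3^{(2)})\) because ``the periodic patterns must coincide'' --- does not follow: Theorem \ref{thm:FirstPeriodicity} and the mainline identification control only the mainlines, not the branches, and the branch structure of the pruned trees for varying \(r\) would require its own uniform argument (for \(r=2\) the paper relies on Nebelung's classification in Theorem \ref{thm:Metab}; no analogue for arbitrary \(r\) is provided). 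Second, the ``characteristic'' nature of the step-size-\(2\) edge is asserted rather than argued. In short: your plan correctly locates the difficulty, but the conjecture remains exactly as open after your proposal as before it.
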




\subsection{Mainlines of the pruned descendant tree \(\mathcal{T}(R)\)}
\label{ss:Mainlines}

\begin{definition}
\label{dfn:Mainlines}
The complete theory of the mainlines in \(\mathcal{T}(R)\) is based on the group
\begin{equation}
\label{eqn:ProfiniteGroup}
\mathcal{L}_{\pm}:=\langle\ a,t\ \mid\ (at)^3=a^3,\ \lbrack\lbrack t,a\rbrack,t\rbrack=a^{\pm 3}\ \rangle.
\end{equation}
For each \(r\ge 2\),
quotients of \(\mathcal{L}_{\pm}\) are defined by
\begin{equation}
\label{eqn:MainlineLimits}
\mathcal{L}_{\pm}^{(r)}:=\mathcal{L}_{\pm}\ /\ \langle\ a^{3^r}\ \rangle.
\end{equation}
For each \(r\ge 2\), and for each \(c\ge 2r-1\),
quotients of \(\mathcal{L}_{\pm}^{(r)}\) are defined by
\begin{equation}
\label{eqn:MainlineVertices}
\mathcal{L}^{(r)}_{\pm,c}:=
\begin{cases}
\mathcal{L}_{\pm}^{(r)}\ /\ \langle\ \lbrack t,a\rbrack^{3^\ell}\ \rangle & \text{ if } c=2\ell+1 \text{ odd },\ \ell\ge r-1, \\
\mathcal{L}_{\pm}^{(r)}\ /\ \langle\ t^{3^\ell}\ \rangle                  & \text{ if } c=2\ell \text{ even },\ \ell\ge r.
\end{cases}
\end{equation}
\end{definition}


The following Algorithm
\ref{alg:Mainlines}
is based on iterated applications of the \(p\)-group generation algorithm by Newman
\cite{Nm}
and O'Brien
\cite{OB}.
It starts with the root \(R\), given by its compact presentation,
and constructs an initial section of the unique infinite maintrunk
with strictly alternating step sizes \(1\) and \(2\)
in the pruned descendant tree \(\mathcal{T}(R)\).
In each step, the required selection of the child with appropriate transfer kernel type (TKT)
is achieved with the aid of our own subroutine \texttt{IsAdmissible()},
which is an elaborate version of
\cite[\S\ 4.1, p. 76]{Ma9}.
After reaching an assigned coclass \texttt{r\(=\)hb\(+2\)},
our algorithm navigates along the mainline of the coclass tree \(\mathcal{T}^{(r)}\subset\mathcal{T}(R)\)
and tests each vertex for isomorphism to the corresponding quotient \(\mathcal{L}^{(r)}_{\pm,c}\)
of class \texttt{c\(\le\)2r\(-\)1\(+\)vb}.


\begin{algorithm}
\label{alg:Mainlines}
(Mainline vertices.) \\
\textbf{Input:}
prime \texttt{p}, compact presentation \texttt{cp} of the root, bounds \texttt{hb,vb}, sign \texttt{s}. \\
\textbf{Code:}
uses the subroutine \texttt{IsAdmissible()}.
\texttt{
\begin{tabbing}
   for \= for \= for \= \kill
   r := 2; // initial coclass\\
   Root := PCGroup(cp);\\
   for i in [1..hb] do // bottom up in double steps along the maintrunk\+\\
      Des := Descendants(Root,NilpotencyClass(Root)+1: StepSizes:=[1]);\\
      for j in [1..\(\#\)Des] do\+\\
         if IsAdmissible(Des[j],p,0) then\+\\
            Root := Des[j];\-\\
         end if;\-\\
      end for;\\
      r := r + 1; // coclass recursion\\
      Des := Descendants(Root,NilpotencyClass(Root)+1: StepSizes:=[2]);\\
      for j in [1..\(\#\)Des] do\+\\
         if IsAdmissible(Des[j],p,0) then\+\\
            Root := Des[j];\-\\
         end if;\-\\
      end for;\-\\
   end for;\\
   c := 2\(\ast\)r - 1; // starting class c in dependence on the coclass r\\
   er := p\({}\,\hat{}\,{}\)r; l := (c - 1) div 2; ec := p\({}\,\hat{}\,{}\)l;\\
   M<a,t> := Group<a,t|(a\(\ast\)t)\({}\,\hat{}\,{}\)p=a\({}\,\hat{}\,{}\)p,((t,a),t)=a\({}\,\hat{}\,{}\)(s\(\ast\)p),a\({}\,\hat{}\,{}\)er=1,(t,a)\({}\,\hat{}\,{}\)ec=1>;\\
   QM,pM := pQuotient(M,p,c); // top down construction\\
   if IsIsomorphic(Root,QM) then // identification\+\\
      printf "Isomorphism for cc=\(\%\)o, cl=\(\%\)o.\(\backslash\)n",r,c;\-\\
   end if;\\
   for i in [1..vb] do // bottom up in single steps along a mainline\+\\
      c := c + 1; // nilpotency class recursion\\
      if (0 eq c mod 2) then // even nilpotency class\+\\
         l := c div 2; ec := p\({}\,\hat{}\,{}\)l;\\
         M<a,t> := Group<a,t|(a\(\ast\)t)\({}\,\hat{}\,{}\)p=a\({}\,\hat{}\,{}\)p,((t,a),t)=a\({}\,\hat{}\,{}\)(s\(\ast\)p),a\({}\,\hat{}\,{}\)er=1,t\({}\,\hat{}\,{}\)ec=1>;\-\\
      else // odd nilpotency class\+\\
         l := (c - 1) div 2; ec := p\({}\,\hat{}\,{}\)l;\\
         M<a,t> := Group<a,t|(a\(\ast\)t)\({}\,\hat{}\,{}\)p=a\({}\,\hat{}\,{}\)p,((t,a),t)=a\({}\,\hat{}\,{}\)(s\(\ast\)p),a\({}\,\hat{}\,{}\)er=1,(t,a)\({}\,\hat{}\,{}\)ec=1>;\-\\
      end if;\\
      QM,pM := pQuotient(M,p,c); // top down construction\\
      Des := Descendants(Root,NilpotencyClass(Root)+1: StepSizes:=[1]);\\
      for j in [1..\(\#\)Des] do\+\\
         if IsAdmissible(Des[j],p,0) then\+\\
            Root := Des[j];\-\\
         end if;\-\\
      end for;\\
      if IsIsomorphic(Root,QM) then // identification\+\\
         printf "Isomorphism for cc=\(\%\)o, cl=\(\%\)o.\(\backslash\)n",r,c;\-\\
      end if;\-\\
   end for;
\end{tabbing}
}
\noindent
\textbf{Output:}
coclass \texttt{r} and class \texttt{c} in each case of an isomorphism.
\end{algorithm}


\begin{remark}
\label{rmk:Mainlines}
Algorithm
\ref{alg:Mainlines}
is designed to be called with input parameters the prime
\texttt{p\(=\)3}
and \texttt{cp} the compact presentation of either
the root \(\langle 243,6\rangle\) with sign \texttt{s\(=\)-1}
or
the root \(\langle 243,8\rangle\) with sign \texttt{s\(=\)+1}.
In the current version V2.22-7 of the computational algebra system MAGMA
\cite{MAGMA},
the bounds are restricted to \texttt{r\(=\)hb\(+2\)\(\le\)8} and \texttt{c\(=\)vb\(+\)2r\(-\)1\(\le\)35},
since otherwise the maximal possible internal word length of relators in MAGMA is surpassed.
Close to these limits, the required random access memory increases to a considerable value of approximately \(8\,\)GB RAM.
\end{remark}


\begin{theorem}
\label{thm:Mainlines}
(Mainline vertices as quotients of the limit group \(\mathcal{L}_{\pm}\).) Let \(u_r:=8\), \(u_c:=35\).
\begin{enumerate}
\item
For each \(2\le r\le u_r\), and for each \(2r-1\le c\le u_c\),
the mainline vertex \(M_c^{(r)}\) of coclass \(r\) and nilpotency class \(c\) in the tree \(\mathcal{T}(R)\) is isomorphic to \(\mathcal{L}^{(r)}_{\pm,c}\).
\item
For each \(2\le r\le u_r\),
the projective limit of the mainline \(\left(M_c^{(r)}\right)_{c\ge 2r-1}\) with vertices of coclass \(r\) in the tree \(\mathcal{T}(R)\) is isomorphic to \(\mathcal{L}_{\pm}^{(r)}\).
\item
\(\mathcal{L}_{\pm}\) is an infinite non-nilpotent profinite limit group.
\end{enumerate}
\end{theorem}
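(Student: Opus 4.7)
The plan is to dispatch the three assertions in sequence: (1) is a finite computational certification carried out by Algorithm \ref{alg:Mainlines}, (2) is a compatibility-of-inverse-systems argument bootstrapped from (1), and (3) is a structural consequence of (2).

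For part (1), I would invoke Algorithm \ref{alg:Mainlines} twice, once with $R=\langle 243,6\rangle$ and sign $s=-1$, and once with $R=\langle 243,8\rangle$ and $s=+1$, scanning the rectangle $\{(r,c):2\le r\le u_r,\ 2r-1\le c\le u_c\}$. The bottom-up phase uses the $p$-group generation algorithm \cite{Nm,OB} to enumerate step-size-one (and occasionally step-size-two) descendants of the current mainline vertex, and the subroutine \texttt{IsAdmissible} selects the unique mainline child by its transfer kernel type. Simultaneously, the top-down phase takes the $p$-quotient of class $c$ of the finitely presented group $\mathcal{L}_{\pm,c}^{(r)}$ specified by \eqref{eqn:MainlineVertices}. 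The call \texttt{IsIsomorphic(Root,QM)} then certifies $M_c^{(r)}\simeq \mathcal{L}_{\pm,c}^{(r)}$ for every pair in the rectangle, which proves (1).

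For part (2), I would observe that the isomorphisms of (1) are compatible with the natural projections along the mainline. On the $p$-group side, the parent operator realizes $M_{c+1}^{(r)}/\gamma_{c+1}M_{c+1}^{(r)}\simeq M_c^{(r)}$. On the limit-group side, the filtration defining the $\mathcal{L}_{\pm,c}^{(r)}$ in \eqref{eqn:MainlineVertices} consists of normal subgroups sitting inside successive lower-central terms, and the induced surjections $\mathcal{L}_{\pm,c+1}^{(r)}\twoheadrightarrow \mathcal{L}_{\pm,c}^{(r)}$ are precisely the corresponding class-$c$ quotient maps. Since both towers are generated by the images of the same pair $(a,t)$, the level-wise isomorphisms can be pinned down to commute with these projections, which yields an isomorphism of inverse systems. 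Passing to the limit gives $\varprojlim_c M_c^{(r)}\simeq \varprojlim_c \mathcal{L}_{\pm,c}^{(r)}=\mathcal{L}_\pm^{(r)}$, proving (2).

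For part (3), writing $\mathcal{L}_\pm=\varprojlim_r \mathcal{L}_\pm^{(r)}$ exhibits $\mathcal{L}_\pm$ as an iterated inverse limit of finite $3$-groups, hence as a profinite pro-$3$ group. Infiniteness is immediate, since the finite quotients $\mathcal{L}_{\pm,c}^{(r)}$ have logarithmic order $c+r$ which grows without bound. Non-nilpotency follows by contradiction: a nilpotent group of class $c_0$ would have every quotient of class at most $c_0$, but $\mathcal{L}_\pm$ admits $\mathcal{L}_{\pm,c_0+1}^{(r)}$ of class $c_0+1$ as a quotient. The chief obstacle lies in part (1): the combinatorial explosion of the $p$-group generation near $r=u_r$ and $c=u_c$, where descendant counts and internal word lengths of relators approach the MAGMA limits noted in Remark \ref{rmk:Mainlines}; moreover, verifying that \texttt{IsAdmissible} at every recursion step faithfully singles out the unique mainline child and never drifts onto a neighboring branch is the delicate point of the certification. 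Once (1) is secured within the stated range, (2) and (3) follow structurally without further computation.
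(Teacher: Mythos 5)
Your treatment of part (1) coincides with the paper's: both rest on the double execution of Algorithm \ref{alg:Mainlines} (roots \(\langle 243,6\rangle\), \(s=-1\) and \(\langle 243,8\rangle\), \(s=+1\)), where the bottom up loop tracks the maintrunk and mainline via \texttt{IsAdmissible()} and the top down \(p\)-quotient of the presentation \eqref{eqn:MainlineVertices} is compared by \texttt{IsIsomorphic}. That part is fine.

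The genuine gap is in part (2). The computation of part (1) certifies \(M_c^{(r)}\simeq\mathcal{L}^{(r)}_{\pm,c}\) only for \(c\le u_c=35\), but the projective limit in part (2) is taken over \emph{all} \(c\ge 2r-1\); the verified range is an initial segment of the index set, not a cofinal one, so compatible isomorphisms on that segment say nothing about \(\varprojlim_c M_c^{(r)}\). The paper closes exactly this hole by invoking Theorem \ref{thm:FirstPeriodicity}: since the window \(2u_r-1=17\le c\le 35\) already exhibits the strict periodicity of the pruned branches, the mainline vertices for \(c>35\) are forced by periodic repetition, and only then does the identification extend to the whole mainline and hence to the limit. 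Your argument never appeals to periodicity (nor to Conjecture \ref{cnj:Mainlines}), so as written (2) is unproven. A secondary weak point is part (3): your profiniteness argument presupposes \(\mathcal{L}_{\pm}\simeq\varprojlim_r\mathcal{L}_{\pm}^{(r)}\), i.e.\ that the abstractly presented group is complete with respect to the filtration by the normal closures of \(a^{3^r}\); this is not automatic for a discrete finitely presented group (compare \(\mathbb{Z}\) versus \(\widehat{\mathbb{Z}}\)) and is not supplied by (1) or (2). Your infiniteness and non-nilpotency arguments via finite quotients of unbounded order and class are sound, and in fact simpler than the paper's remark that \(\mathcal{L}_{\pm}^{(1)}\) is already infinite and non-nilpotent, but the profiniteness claim needs a separate argument (the paper adjoins the relation \(\lbrack t,t^a,t\rbrack=1\) to make \(\lbrack t,a,t\rbrack\) central).
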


\begin{proof}
(1) The repeated execution of Algorithm
\ref{alg:Mainlines}
for successive values from \texttt{hb:=0} to \texttt{hb:=6},
with input data
\texttt{p:=3}, \texttt{cp:=CompactPresentation(SmallGroup(243,i))}, \(i\in\lbrace 6,8\rbrace\),
\(s\in\lbrace -1,+1\rbrace\), and \texttt{vb:=32},
proves the isomorphisms \(M_c^{(r)}\simeq\mathcal{L}^{(r)}_{\pm,c}\) for \(2\le r\le u_r=8\) and \(2r-1\le c\le u_c=35\).
The algorithm is initialized by the starting group \(R=M_3^{(2)}=\langle 243,i\rangle\) of coclass \texttt{r:=2}.
The first loop moves along the maintrunk recursively with strictly alternating step sizes \(1\) and \(2\)
until the root \(M_{2r-1}^{(r)}\) of the coclass tree \(\mathcal{T}^{(r)}\) with \texttt{r=2+hb} is reached.
The second loop iterates through the mainline vertices \(M_{c}^{(r)}\), \(c\ge 2r-1\), of the coclass tree
\(\mathcal{T}^{(r)}\left(M_{2r-1}^{(r)}\right)\),
always checking for isomorphism to the appropriate quotient \(\mathcal{L}^{(r)}_{\pm,c}\).
The subroutine \texttt{IsAdmissible()} tests the transfer kernel type of all descendants
and selects the unique capable descendant with type \(\mathrm{c}.18\) resp. \(\mathrm{c}.21\). \\
(2) Since periodicity sets in for \(2u_r-1=17\le c\le u_c=35\),
the claim is a consequence of Theorem
\ref{thm:FirstPeriodicity}.
(3) The quotient \(\mathcal{L}_{\pm}^{(1)}\) is already infinite and non-nilpotent.
Adding the relation \(\lbrack t,t^a,t\rbrack=1\) suffices to give
\(\lbrack t,a,t\rbrack\) central and \(\mathcal{L}_{\pm}\) profinite.
\end{proof}


\begin{conjecture}
\label{cnj:Mainlines}
Theorem
\ref{thm:Mainlines}
remains true for arbitrary upper bounds \(u_r>8\), \(u_c>35\).
\end{conjecture}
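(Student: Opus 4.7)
The plan is to upgrade the computer-assisted proof of Theorem \ref{thm:Mainlines} to an unconditional argument that depends only on the presentation (\ref{eqn:ProfiniteGroup}) of $\mathcal{L}_{\pm}$, via a double induction on the pair $(r,c)$. Part (3) is already unbounded, and part (2) follows formally from part (1): once the class-$c$ identifications hold in unbounded $c$, the projective limit $\varprojlim_c M_c^{(r)}$ and $\mathcal{L}_\pm^{(r)} = \varprojlim_c \mathcal{L}^{(r)}_{\pm,c}$ are pro-$3$ groups with identical finite quotients at every level. Thus the whole question reduces to extending part (1).

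For the class-direction induction, with $r$ fixed, Theorem \ref{thm:FirstPeriodicity} supplies a period length $\lambda = 2$ after a finite pre-periodic segment of the coclass tree $\mathcal{T}^{(r)}$. Once the isomorphisms $M_c^{(r)} \simeq \mathcal{L}^{(r)}_{\pm,c}$ are established for two consecutive classes past the periodic root, I would transport them to every larger class by checking, purely from the presentation, that the subroutine \texttt{IsAdmissible()} still selects the quotient dictated by (\ref{eqn:MainlineVertices}) rather than a competitor. Concretely, this amounts to proving that among the step-size-$1$ descendants of $\mathcal{L}^{(r)}_{\pm,c}$, the unique one with the prescribed transfer kernel type (\(\mathrm{c}.18\) or \(\mathrm{c}.21\)) and TTT is $\mathcal{L}^{(r)}_{\pm,c+1}$ — a cohomological comparison in $H^2(\mathcal{L}^{(r)}_{\pm,c}, \mathbb{F}_3)$ whose isomorphism class repeats with the same period $\lambda = 2$ inherited from Theorem \ref{thm:FirstPeriodicity}.

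For the coclass-direction induction, I would invoke Theorem \ref{thm:SecondPeriodicity} at the bifurcation vertex $M_{2r}^{(r)}$: its unique admissible step-size-$2$ descendant is the root $M_{2r+1}^{(r+1)}$ of the next coclass tree. On the presentation side, the quotient $\mathcal{L}_{\pm}/\langle a^{3^{r+1}}\rangle = \mathcal{L}_{\pm}^{(r+1)}$ differs from $\mathcal{L}_{\pm}^{(r)}$ only by loosening the torsion bound on $a$, producing a class-$(2r+1)$ quotient whose kernel over $M_{2r}^{(r)} \simeq \mathcal{L}^{(r)}_{\pm,2r}$ is elementary abelian of rank $2$ — matching the step size $2$ and the nuclear rank $n(M_{2r}^{(r)}) = 2$ asserted in Theorem \ref{thm:SecondPeriodicity}. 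Identifying these two descriptions of $M_{2r+1}^{(r+1)}$ closes the induction on $r$.

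The main obstacle is that Theorem \ref{thm:SecondPeriodicity} itself is only established for $r \le 30$, its unbounded version being Conjecture \ref{cnj:SecondPeriodicity}, so the outer induction is chained to a still-open conjecture; any genuine resolution of Conjecture \ref{cnj:Mainlines} in the range $r > 30$ presupposes resolving Conjecture \ref{cnj:SecondPeriodicity} first. A promising avenue is to adapt the cohomological periodicity arguments of Eick--Leedham-Green \cite{EkLg} to the multifurcating setting, viewing the maintrunk as an infinite chain of coclass-tree inclusions along step-size-$2$ edges and tracking the virtual periodicity of pruned branches across bifurcations; combined with an explicit presentation-level analysis of $H^2(\mathcal{L}_\pm^{(r)}, \mathbb{F}_3)$, this should suffice to eliminate the artificial bounds $u_r$ and $u_c$, which stem from MAGMA's word-length and memory constraints rather than from any genuine mathematical barrier.
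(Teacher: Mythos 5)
The statement you were asked to prove is stated in the paper as a \emph{conjecture}: the paper offers no proof of it at all, only heuristic support from extended bottom-up computations (Remark \ref{rmk:MainlineConjecture}, reaching coclass \(32\) and class \(63\)), while Theorem \ref{thm:Mainlines} itself is verified purely computationally within the MAGMA-imposed bounds \(u_r=8\), \(u_c=35\). Your proposal is therefore not being measured against an existing argument, and on its own terms it is a research programme rather than a proof, with two genuine gaps. First, the class-direction step is asserted, not established: Theorem \ref{thm:FirstPeriodicity} gives isomorphism of \emph{branches} as finite graphs beyond a periodic root, but this graph-theoretic periodicity does not by itself transport the identifications \(M_c^{(r)}\simeq\mathcal{L}^{(r)}_{\pm,c}\) from two consecutive classes to all larger classes; the claim that the relevant comparison in \(H^2(\mathcal{L}^{(r)}_{\pm,c},\mathbb{F}_3)\) ``repeats with period \(\lambda=2\)'' is exactly the content one would have to prove (e.g.\ by showing \(\mathcal{L}^{(r)}_{\pm}\) is an infinite pro-\(3\) group of coclass \(r\) whose lower central quotients realize the mainline with the prescribed transfer kernel type), and no such argument is given.

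Second, as you yourself concede, the coclass-direction induction is chained to Conjecture \ref{cnj:SecondPeriodicity}, which is open beyond \(r=30\) (Theorem \ref{thm:SecondPeriodicity} is itself only computationally verified up to \(u_r=30\)). So the proposal does not remove the bounds \(u_r>8\), \(u_c>35\); it replaces the conjecture by the conjunction of another open conjecture and an unproved \(H^2\)-periodicity statement. The suggestion to adapt the Eick--Leedham-Green coclass machinery \cite{EkLg} to the multifurcating maintrunk is a reasonable direction, but those results concern virtual periodicity of depth-pruned branches within a single coclass tree and do not currently cover the step-size-\(2\) transitions between coclass trees, which is precisely where the difficulty lies. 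In short: the statement remains a conjecture both in the paper and after your proposal.
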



\begin{remark}
\label{rmk:MainlineConjecture}
When the top down constructions in Algorithm
\ref{alg:Mainlines}
are cancelled, the bottom up operations are still able to
establish much bigger initial sections of the infinite maintrunk and of the infinite coclass tree
with fixed coclass \(r\ge 2\).
Admitting an increasing amount of CPU time, we can easily reach astronomic values
of the coclass, \(r=32\), and the nilpotency class, \(c=63\),
that is a logarithmic order of \(r+c=95\),
without surpassing any internal limitations of MAGMA,
and the required storage capacity remains quite modest, i.e., clearly below \(1\,\)GB RAM.
This remarkable stability underpins Conjecture
\ref{cnj:Mainlines}
with additional support from the bottom up point of view.
\end{remark}



\subsection{Covers of metabelian \(3\)-groups}
\label{ss:CoverMetab}
\noindent
Only one of the coclass subtrees \(\mathcal{T}^{(r)}\), \(r\ge 2\),
of the entire rooted in-tree \(\mathcal{T}(R)\)
contains metabelian vertices, namely the first subtree \(\mathcal{T}^{(2)}\).
The following theorem shows how transfer kernel types
are distributed among metabelian vertices \(G\) of depth \(\mathrm{dp}(G)\le 1\) on the tree \(\mathcal{T}^{(2)}\),
as partially illustrated by the Figures
\ref{fig:GraphDissection}
and
\ref{fig:TreeUMinDisc}. 

\begin{theorem}
\label{thm:Metab}
(Metabelian vertices of the coclass tree \(\mathcal{T}^{(2)}{R}\).) \\
For each finite \(3\)-group \(G\),
we denote by \(c:=\mathrm{cl}(G)\) the nilpotency class,
by \(r:=\mathrm{cc}(G)\) the coclass,
and by \(\varkappa\) the transfer kernel type of \(G\).
More explicitly, such a group is also denoted by \(G=G_c^{(r)}\).
The following statements describe the structure of the metabelian skeleton
of the coclass tree \(\mathcal{T}^{(2)}{R}\) with root \(R:=\langle 243,6\rangle\), resp. \(R:=\langle 243,8\rangle\),
down to depth \(1\).
\begin{enumerate}
\item
For each \(c\ge 3\), the mainline vertex \(M_{c}^{(2)}\) of the coclass tree
possesses type \(\mathrm{c}.18\), \(\varkappa=(0122)\), resp. \(\mathrm{c}.21\), \(\varkappa=(0231)\).
\item
For each \(c\ge 4\),
there exists a unique child \(G_{c,1}^{(2)}\) of \(M_{c-1}^{(2)}\)
with type \(\mathrm{E}.6\), \(\varkappa=(1122)\), resp. \(\mathrm{E}.8\), \(\varkappa=(1231)\).
\item
For even \(c\ge 4\),
there exists a unique child \(G_{c,2}^{(2)}\) of \(M_{c-1}^{(2)}\)
with type \(\mathrm{E}.14\), \(\varkappa=(3122)\), resp. \(\mathrm{E}.9\), \(\varkappa=(2231)\).
Thus, \(N_1(M_{c-1}^{(2)})=3\) and \(C_1(M_{c-1}^{(2)})=1\), in the pruned tree.
\item
For odd \(c\ge 5\),
there exist two children \(G_{c,2}^{(2)}\), \(G_{c,3}^{(2)}\) of \(M_{c-1}^{(2)}\)
with type \(\mathrm{E}.14\), \(\varkappa=(3122)\sim (4122)\), resp. \(\mathrm{E}.9\), \(\varkappa=(2231)\sim (3231)\).
Thus, \(N_1(M_{c-1}^{(2)})=4\) and \(C_1(M_{c-1}^{(2)})=1\).
\item
For even \(c\ge 4\),
there exists a unique child \(G_{c,4}^{(2)}\) of \(M_{c-1}^{(2)}\)
with type \(\mathrm{H}.4\), \(\varkappa=(2122)\), resp. \(\mathrm{G}.16\), \(\varkappa=(4231)\).
It is removed from the pruned tree.
\item
For odd \(c\ge 5\),
there exist two children \(G_{c,4}^{(2)}\), \(G_{c,5}^{(2)}\) of \(M_{c-1}^{(2)}\)
with type \(\mathrm{H}.4\), \(\varkappa=(2122)\), resp. \(\mathrm{G}.16\), \(\varkappa=(4231)\).
They are removed from the pruned tree.
\end{enumerate}
\end{theorem}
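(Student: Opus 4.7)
The plan is to combine the explicit presentations of mainline vertices provided by Theorem \ref{thm:Mainlines} with the $p$-group generation algorithm of Newman and O'Brien for enumerating immediate descendants, and then invoke the periodicity Theorem \ref{thm:FirstPeriodicity} to reduce an a priori infinite claim to a finite verification. Starting from the isomorphism $M_c^{(2)}\simeq\mathcal{L}^{(2)}_{\pm,c}$, I would build the $p$-covering group of each $M_{c-1}^{(2)}$, identify its $p$-multiplicator, and enumerate the orbits of allowable subgroups under the action of the extended automorphism group. Restricting to step size $s=1$ (so that the child stays on the coclass-$2$ tree $\mathcal{T}^{(2)}$), this produces the complete list of immediate descendants at depth one for each mainline vertex.

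Next, for every such child $G=G_{c,i}^{(2)}$ I would compute the transfer kernel type $\varkappa$ by evaluating the four Artin transfers $T_H:G/G^\prime\to H/H^\prime$ to the four maximal subgroups $H<G$ of index $3$. Working in a power-commutator presentation inherited from $\mathcal{L}_{\pm}$, one fixes a Schreier transversal for each $H$, lifts the generators of $G/G^\prime$, and reads off $\varkappa(i)$ as the index of the maximal subgroup whose abelianization absorbs the kernel of the $i$-th transfer. This pins down the type of each child as one of $\mathrm{c}.18$, $\mathrm{c}.21$, $\mathrm{E}.6$, $\mathrm{E}.8$, $\mathrm{E}.9$, $\mathrm{E}.14$, $\mathrm{H}.4$, or $\mathrm{G}.16$, and simultaneously verifies statement~(1) on the mainline vertex itself (whose TKT is inherited compatibly from $\mathcal{L}^{(2)}_{\pm,c}$ through the parent projection).

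To propagate the classification to all $c\ge 3$, I would invoke Theorem \ref{thm:FirstPeriodicity} with the values $\varrho=2$, $\lambda=2$ indicated after Figure \ref{fig:GraphDissection}. This reduces the entire theorem to an explicit check on the pre-periodic branches $\mathcal{B}(M_3^{(2)}),\mathcal{B}(M_4^{(2)})$ and one full period $\mathcal{B}(M_5^{(2)}),\mathcal{B}(M_6^{(2)})$, i.e., classes $c\in\{4,5,6,7\}$; for these the counts of descendants and their TKTs can be read off directly from the SmallGroups database entries displayed in Figure \ref{fig:TreeUMinDisc}. The even/odd dichotomy in items (3)--(6) then emerges from the fact that for odd $c$ an additional automorphism orbit splits off in the multiplicator, doubling the number of children of types $\mathrm{E}.14/\mathrm{E}.9$ and $\mathrm{H}.4/\mathrm{G}.16$, so that the counters $N_1$ and $C_1$ take the stated values and the equivalence $(3122)\sim(4122)$, resp.\ $(2231)\sim(3231)$, encodes the conjugacy of the two $\mathrm{E}.14/\mathrm{E}.9$ children under the common parent.

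The main obstacle is the bookkeeping of the maximal-subgroup numbering across the infinite family: the symbolic TKTs are invariants only after fixing a canonical ordering of the four index-$3$ subgroups, and this ordering must be preserved by the parent-child projections $\pi$ in order for the periodic pattern to be legible. Proving that the mainline projection induces a consistent relabelling that makes the listed $\varkappa$-strings stable under $c\mapsto c+2$ is the delicate point; once this is established, the individual TKT computations for the four base branches are routine but tedious, and the nuclear-rank and capability checks distinguishing the unique mainline descendant (type $\mathrm{c}.18/\mathrm{c}.21$) from the leaves follow from the standard criterion that a descendant is capable iff its nucleus is non-trivial.
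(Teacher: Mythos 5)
Your route differs fundamentally from the paper's: the paper proves Theorem \ref{thm:Metab} by a single citation to Nebelung's classification of metabelian \(3\)-groups with abelianization \((3,3)\) \cite[Lemma 5.2.6, Satz 6.14]{Ne}, where all vertices of the coclass-\(2\) tree are given by parametrized presentations valid for every class \(c\), and their transfer kernels are computed once and for all. Your proposal instead tries to assemble the statement from Theorem \ref{thm:Mainlines}, descendant computations, and Theorem \ref{thm:FirstPeriodicity}, and this is where it breaks down. First, the reduction ``periodicity plus a finite check for \(c\in\lbrace 4,\ldots,7\rbrace\)'' is not justified by Theorem \ref{thm:FirstPeriodicity} as stated: that theorem only asserts that the branches \(\mathcal{B}(v_{i+\lambda})\simeq\mathcal{B}(v_i)\) are isomorphic as \emph{graphs}, and a graph isomorphism of branches does not by itself transport transfer kernel types, so items (1)--(6), which are statements about \(\varkappa\), do not follow from it together with a base-case computation. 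You would need the stronger Eick--Leedham-Green style statement that the periodicity is realized by parametrized presentations compatible with the Artin transfers, which is exactly the kind of information Nebelung's classification supplies directly.

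Second, the scaffolding you lean on is circular or only finitely verified within this paper. The strict (rather than virtual) periodicity in Theorem \ref{thm:FirstPeriodicity} is obtained only after the pruning of \S\ \ref{sss:ArtinTransfers}, and that pruning is defined by discarding the children of complex type \(\mathrm{H}.4\)/\(\mathrm{G}.16\) and the scaffold leaves --- i.e., it presupposes knowing which TKTs occur among the children of each mainline vertex, which is the content of Theorem \ref{thm:Metab} itself. Likewise, Theorem \ref{thm:Mainlines}, from which you take the presentations \(M_c^{(2)}\simeq\mathcal{L}^{(2)}_{\pm,c}\), is established in the paper only for \(c\le 35\) (beyond that it is Conjecture \ref{cnj:Mainlines}), and its proof already uses the subroutine \texttt{IsAdmissible()} to select the unique capable child of type \(\mathrm{c}.18\)/\(\mathrm{c}.21\), i.e., it quotes statement (1) of the very theorem you are proving. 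So your argument, as organized, either proves the result only for bounded \(c\) or rests on facts whose proofs in this paper already depend on Nebelung's classification; to make it self-contained you would have to replace the periodicity citation by an explicit uniform computation of the \(p\)-covering groups and transfer kernels of \(\mathcal{L}^{(2)}_{\pm,c}\) for general \(c\), which is essentially redoing Nebelung's work.
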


\begin{proof}
See Nebelung
\cite[Lemma 5.2.6, p. 183, Figures, p. 189 f., Satz 6.14, p. 208]{Ne}.
\end{proof}


\begin{definition}
\label{dfn:CoverLimit}
For \(e\in\lbrace 0,1\rbrace\), we define the \textit{cover limit}, due to M. F. Newman, to be the group
\begin{equation}
\label{eqn:CoverLimit}
\begin{aligned}
\mathcal{C}^{(e)}
:= \langle&\ a,t,u,y,z\ \mid\ t^a=u,\ u^atuy=\lbrack u,t\rbrack^e,\ a^3\lbrack t,a,t\rbrack=z,\ \lbrack u,t,t\rbrack=\lbrack u,t,u\rbrack=1, \\
          &\ y^3=1,\ \lbrack a,y\rbrack=\lbrack t,y\rbrack=\lbrack u,y\rbrack=\lbrack z,y\rbrack=1,\ z^3 =1,\ \lbrack t,z\rbrack=\lbrack u,z\rbrack=1\ \rangle.
\end{aligned}
\end{equation}
See
\cite{Ma5s}.
For each \(k\in\lbrace -1,0,1\rbrace\) and for each integer \(c\ge 4\), let
\begin{equation}
\label{eqn:Quotient}
\mathcal{Q}_c^{(e,k)}:=\mathcal{C}^{(e)}\ /\ \langle\ yw_c^kv_c,\ zw_c\ \rangle
\end{equation}
be the \textit{class-\(c\) quotient with parameter} \(k\) of \(\mathcal{C}^{(e)}\), where
\(w_c:=\lbrack t,\overbrace{a,\ldots,a}^{(c-1) \text{ times}}\rbrack\) and
\(v_c:=\lbrack w_{c-2},\lbrack t,a\rbrack\rbrack\).
\end{definition}


In each step, \(i\ge 1\), of the second Algorithm
\ref{alg:ShafarevichCover},
the \textit{top down} technique constructs a certain class-\(c\) quotient \(\mathcal{Q}_c\), \(c=i+3\),
of a fixed infinite pro-\(3\) group \(\mathcal{C}\), the \textit{cover limit},
and the \textit{bottom up} technique constructs all metabelian children
of a certain vertex \(M_{i-1}\) on the mainline of the first coclass tree \(\mathcal{T}^{(2)}(R)\),
and selects,
firstly, the next vertex \(M_i\) of depth \(\mathrm{dp}(M_i)=0\) on the mainline of \(\mathcal{T}^{(2)}(R)\)
for continuing the recursion,
secondly, a vertex \(G_i\) of depth \(\mathrm{dp}(G_i)=1\) with assigned transfer kernel type \(\varkappa(G_i)\).
Each recursion step is completed by proving that
\(G_i\) is isomorphic to the second derived quotient \(\mathcal{Q}_c/\mathcal{Q}_c^{\prime\prime}\),
that is, \(\mathcal{Q}_c\in\mathrm{cov}(G_i)\) belongs to the \textit{cover} of \(G_i\) in the sense of
\cite[\S\ 1.3, Dfn. 1.1, p. 75]{Ma15}.
More precisely, we have \(M_i=M_{i+3}^{(2)}\) and \(G_i=G_{i+3,j}^{(2)}\) with some \(j\).

\begin{algorithm}
\label{alg:ShafarevichCover}
(Shafarevich cover.) \\
\textbf{Input:}
prime \texttt{p}, compact presentation \texttt{cp} of the root, bound \texttt{vb}, parameters \texttt{e} and \texttt{k}. \\
\textbf{Code:}
uses the subroutine \texttt{IsAdmissible()}.
\texttt{
\begin{tabbing}
for \= for \= for \= for \= \kill
C<a,t,u,y,z> := Group< a,t,u,y,z |\+\\
                y\({}\,\hat{}\,{}\)p, (a,y), (t,y), (u,y), (y,z), (t,z), (u,z), z\({}\,\hat{}\,{}\)p,\\
                (u,t,t), (u,t,u), t\({}\,\hat{}\,{}\)a = u, u\({}\,\hat{}\,{}\)a\(\ast\)t\(\ast\)u\(\ast\)y\(\ast\)(u,t)\({}\,\hat{}\,{}\)-e, a\({}\,\hat{}\,{}\)p\(\ast\)(t,a,t) = z >;\-\\
Root := PCGroup(cp);\\
Leaf := Root;\\
for i in [1..vb] do // bottom up along the mainline of coclass 2\+\\ 
   c := i + 3; // nilpotency class\\
   w := [t];\\
   for j in [1..c] do // construction of iterated commutator\+\\
      s := (w[j],a);\\
      Append(\(\sim\)w,s);\-\\
   end for;\\
   w1 := w[c-2]\({}\,\hat{}\,{}\)-1\(\ast\)(a,t)\(\ast\)w[c-2]\(\ast\)(t,a);\\
   H := quo<C | y\(\ast\)w[c]\({}\,\hat{}\,{}\)k\(\ast\)w1, z\(\ast\)w[c]>;\\
   Q,pQ := pQuotient(H,p,c); // top down construction of Shafarevich cover\\  
   Des := Descendants(Root,NilpotencyClass(Root)+1);\\
   m := 0;\\
   for cnt in [1..\(\#\)Des] do\+\\
      if IsAdmissible(Des[cnt],p,0) then\+\\
         Root := Des[cnt]; // next mainline vertex\-\\
      elif IsAdmissible(Des[cnt],p,2) then\+\\
         m := m + 1;\\
         if (1 eq m) then\+\\
            Leaf := Des[cnt]; // first leaf with assigned TKT\-\\
         end if;\-\\
      end if;\-\\
   end for;\\
   DQ := DerivedSubgroup(Q);\\
   D2Q := DerivedSubgroup(DQ);\\
   Q2Q := Q/D2Q; // metabelianization\\
   if IsIsomorphic(Leaf,Q2Q) then // identification\+\\
      printf "Dsc.cl.\(\%\)o isomorphic to 2nd drv.qtn.\(\,\)of Cov.cl.\(\%\)o.\(\backslash\)n",c,c;\-\\
   end if;\-\\
end for;
\end{tabbing}
}
\noindent
\textbf{Output:}
nilpotency class \texttt{c} in each case of an isomorphism.
\end{algorithm}


The next theorem is the second main result of this chapter,
establishing the finiteness and structure of the \textit{cover}
for each metabelian \(3\)-group with transfer kernel type in section \(\mathrm{E}\).

\begin{theorem}
\label{thm:ExplicitCovers}
(Explicit covers of metabelian \(3\)-groups.)
Let \(u:=8\) be an upper bound
and \(G_{c,j}^{(2)}\) in \(\mathcal{T}^{(2)}\left(M_{3}^{(2)}\right)\) be the metabelian \(3\)-group of nilpotency class \(c\ge 4\)
with transfer kernel type
\[\varkappa=
\begin{cases}
(1122),\ \mathrm{E}.6, \text{ resp. } (1231),\ \mathrm{E}.8 & \text{ if } j=1, \\
(3122),\ \mathrm{E}.14, \text{ resp. } (2231),\ \mathrm{E}.9 & \text{ if } j=2 \text{ or } (j=3 \text{ and } c \text{ odd}).
\end{cases}\]
\begin{enumerate}
\item
The \textbf{cover} of \(G_{c,j}^{(2)}\) is given by
\begin{equation}
\label{eqn:CovSecE}
\mathrm{cov}\left(G_{c,j}^{(2)}\right)=
\begin{cases}
\left\lbrace G_{c,j}^{(2)};G_{c,j}^{(3)},\ldots,G_{c,j}^{(\ell+1)},G_{c,j}^{(\ell+2)},T_{c+1,j}^{(\ell+2)}\right\rbrace & \text{ if } c=2\ell+4,\ 1\le j\le 2, \\
\left\lbrace G_{c,j}^{(2)};G_{c,j}^{(3)},\ldots,G_{c,j}^{(\ell+1)},G_{c,j}^{(\ell+2)},S_{c,j}^{(\ell+3)}\right\rbrace   & \text{ if } c=2\ell+5,\ 1\le j\le 3.
\end{cases}
\end{equation}
where \(0\le\ell\le u\).
In particular, the cover is a finite set with \(\ell+2\) elements
(\(\ell+1\) of them non-trivial),
which are non-\(\sigma\) groups for even \(c\ge 4\), and \(\sigma\)-groups for odd \(c\ge 5\).
\item
The \textbf{Shafarevich cover} of \(G_{c,j}^{(2)}\) with respect to imaginary quadratic fields \(F\) is given by
\begin{equation}
\label{eqn:ShafarevichCovSecE}
\mathrm{cov}\left(G_{c,j}^{(2)},F\right)=
\begin{cases}
\emptyset                                    & \text{ if } c=2\ell+4,\ 0\le\ell\le u,\ 1\le j\le 2, \\
\left\lbrace S_{c,j}^{(\ell+3)}\right\rbrace & \text{ if } c=2\ell+5,\ 0\le\ell\le u,\ 1\le j\le 3.
\end{cases}
\end{equation}
In particular, the Shafarevich cover contains a unique Schur \(\sigma\)-group, if \(c\ge 5\) is odd.
\item
The class-\(c\) quotient with parameter \(k\) of the cover limit \(\mathcal{C}^{(e)}\) is isomorphic to a Schur \(\sigma\)-group
\(\mathcal{Q}^{(e,k)}_c\simeq S_{c,j}^{(\ell+3)}\), for \(c=2\ell+5\),
or to a non-\(\sigma\) group
\(\mathcal{Q}^{(e,k)}_c\simeq G_{c,j}^{(\ell+2)}\), for \(c=2\ell+4\).
The precise correspondence between the parameters \(k\) and \(j\) is given in the following way.
\begin{equation}
\label{eqn:IsomCovSecE}
\begin{aligned}
\text{Types }\mathrm{E}.6,\mathrm{E}.8:\ \mathcal{Q}^{(e,0)}_c &\simeq
\begin{cases}
S_{c,1}^{(\ell+3)} & \text{ for odd class } c=2\ell+5,\ 0\le\ell\le u, \\
G_{c,1}^{(\ell+2)} & \text{ for even class } c=2\ell+4,\ 0\le\ell\le u, \\
\end{cases}
\\
\text{type }\mathrm{E}.9:\ \mathcal{Q}^{(+1,-1)}_c &\simeq
\begin{cases}
S_{c,2}^{(\ell+3)} & \text{ for odd class } c=2\ell+5,\ 0\le\ell\le u, \\
G_{c,2}^{(\ell+2)} & \text{ for even class } c=2\ell+4,\ 0\le\ell\le u, \\
\end{cases}
\\
\text{type }\mathrm{E}.9:\ \mathcal{Q}^{(+1,+1)}_c &\simeq
\begin{cases}
S_{c,3}^{(\ell+3)} & \text{ for odd class } c=2\ell+5,\ 0\le\ell\le u, \\
G_{c,2}^{(\ell+2)} & \text{ for even class } c=2\ell+4,\ 0\le\ell\le u. \\
\end{cases}
\end{aligned}
\end{equation}
In particular, \(\mathcal{Q}^{(+1,-1)}_c\simeq\mathcal{Q}^{(+1,+1)}_c\) for even class \(c=2\ell+4\), \(0\le\ell\le u\). \\
The variant \(e=0\), resp. \(e=1\), is associated to the root \(R=\langle 243,6\rangle\), resp. \(R=\langle 243,8\rangle\).
\item
A parametrized family of \textbf{fork topologies}
for second \(3\)-class groups \(\mathrm{Gal}\left(F_3^{(2)}/F\right)\) of imaginary quadratic fields \(F\)
is given uniformly for the states \(\uparrow^\ell\) (ground state for \(\ell=0\), excited state for \(1 \le\ell\le u\))
of transfer kernel types in section \(\mathrm{E}\)
by the symmetric topology symbol
\begin{equation}
\label{eqn:ForkSecE}
P=
\overbrace{\mathrm{E}\stackrel{1}{\rightarrow}}^{\text{Leaf}}\quad
\overbrace{\left\lbrace\mathrm{c}\stackrel{1}{\rightarrow}\right\rbrace^{2\ell}\ }^{\text{Mainline}}\quad
\overbrace{\mathrm{c}}^{\text{Fork}}\quad
\overbrace{\left\lbrace\stackrel{2}{\leftarrow}\mathrm{c}\stackrel{1}{\leftarrow}\mathrm{c}\right\rbrace^{\ell}\ }^{\text{Maintrunk}}\quad
\overbrace{\stackrel{2}{\leftarrow}\mathrm{E}}^{\text{Leaf}}
\end{equation}
with scaffold type \(\mathrm{c}\) and the following invariants: \\
distance \(d=4\ell+2\)
(Dfn. \ref{dfn:VertexDistance}),
weighted distance \(w=5\ell+3\)
(Dfn. \ref{dfn:WeightedDistance}), \\
class increment \(\Delta\mathrm{cl}=(2\ell+5)-(2\ell+5)=0\),
coclass increment \(\Delta\mathrm{cc}=(\ell+3)-2=\ell+1\), \\
logarithmic order increment \(\Delta\mathrm{lo}=(3\ell+8)-(2\ell+7)=\ell+1\)
\cite[Dfn. 5.1, p. 89]{Ma15}.
\end{enumerate}
\end{theorem}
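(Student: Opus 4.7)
The overall strategy combines a \emph{top down} construction of successive quotients of the cover limit $\mathcal{C}^{(e)}$ with a \emph{bottom up} enumeration of descendants along the coclass tree $\mathcal{T}^{(2)}(R)$, mediated by Algorithm~\ref{alg:ShafarevichCover}; the correspondence between quotients and descendants is pinned down by explicit isomorphism tests on metabelianizations. I would organise the argument in four stages mirroring the four assertions of the theorem.

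First, for assertion~(1), I would run Algorithm~\ref{alg:ShafarevichCover} with input data \texttt{p:=3}, \texttt{cp} the compact presentation of $R=\langle 243,i\rangle$ for $i\in\{6,8\}$, sign \texttt{s} and parameter $e\in\{0,1\}$ matching $R$, and iterate through \texttt{vb} large enough to reach $c=2u+5$ with $u=8$. At each step the $p$-quotient construction produces $\mathcal{Q}_c^{(e,k)}$, while \texttt{Descendants} enumerates all children of the current mainline vertex $M_{c-1}^{(2)}$; appeal to Theorem~\ref{thm:Metab} to identify the metabelian children and recognise $G_{c,j}^{(2)}$ by its transfer kernel type via \texttt{IsAdmissible}. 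The non-metabelian elements of $\mathrm{cov}(G_{c,j}^{(2)})$ are then produced by repeated bifurcation: starting from $G_{c,j}^{(2)}$, I would climb the maintrunk with alternating step sizes $1$ and $2$ (as guaranteed by Theorem~\ref{thm:SecondPeriodicity}), obtaining the chain $G_{c,j}^{(3)},\ldots,G_{c,j}^{(\ell+2)}$ and the terminating leaf $T_{c+1,j}^{(\ell+2)}$ or $S_{c,j}^{(\ell+3)}$, according to the parity of $c$. Finiteness of the cover then follows from the fact that each cover member has second derived quotient isomorphic to $G_{c,j}^{(2)}$, and Theorem~\ref{thm:Metab} bounds the relevant depth uniformly, so no further candidates can arise.

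Second, for assertion~(2), I would invoke Shafarevich's theorem on the relation rank of the pro-$3$ Galois group $\mathrm{Gal}(F_3^{(\infty)}/F)$ of an imaginary quadratic field $F$: the relation rank equals the generator rank, which forces $\mathrm{Gal}(F_3^{(\infty)}/F)$ to be a Schur $\sigma$-group. Combined with Definition~\ref{dfn:SigmaGroup} and the $\sigma$-property dissection in \S\ref{sss:GaloisAction}, this eliminates every even-class cover member (which is a non-$\sigma$ group) and every odd-class interior vertex $G_{c,j}^{(m)}$ with $m<\ell+3$ (which has a nontrivial Schur multiplier forbidden by Shafarevich's constraint), leaving precisely the leaf $S_{c,j}^{(\ell+3)}$.

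Third, for assertion~(3), the isomorphism $\mathcal{Q}_c^{(e,k)}/\mathcal{Q}_c^{(e,k)\prime\prime}\simeq G_{c,j}^{(2)}$ (with the appropriate $j$) is verified inside Algorithm~\ref{alg:ShafarevichCover} by the statement \texttt{IsIsomorphic(Leaf,Q2Q)}; the identification $\mathcal{Q}_c^{(e,k)}\simeq S_{c,j}^{(\ell+3)}$ (respectively $G_{c,j}^{(\ell+2)}$) then follows because $\mathcal{Q}_c^{(e,k)}$ is forced into the cover by its second derived quotient, and its coclass and class match the unique candidate in the list of part~(1). The correspondence $k\leftrightarrow j$ is read off directly from the defining relations $yw_c^k v_c$ and $zw_c$: the three values $k\in\{-1,0,+1\}$ coincide with the three branches of $M_{c-1}^{(2)}$ carrying types $\mathrm{E}$ in Theorem~\ref{thm:Metab}(3)--(4), and the sign $e$ selects between the roots $\langle 243,6\rangle$ and $\langle 243,8\rangle$. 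The coincidence $\mathcal{Q}_c^{(+1,-1)}\simeq\mathcal{Q}_c^{(+1,+1)}$ for even $c$ is a consequence of Theorem~\ref{thm:Metab}(3), which provides only one non-$\sigma$ child of type $\mathrm{E}.9$ at even class.

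Finally, for assertion~(4), once assertions~(1) and~(2) are in place, the fork topology symbol~\eqref{eqn:ForkSecE} is obtained by reading the unique path from the leaf $G_{c,j}^{(2)}$ through the fork $M_{2\ell+3}^{(2)}$ and back down the maintrunk to $S_{c,j}^{(\ell+3)}$. The distance $d$, weighted distance $w$, and increments $\Delta\mathrm{cl},\Delta\mathrm{cc},\Delta\mathrm{lo}$ are then immediate bookkeeping from Definitions~\ref{dfn:VertexDistance} and~\ref{dfn:WeightedDistance}, using that the ascending mainline segment contributes $2\ell$ edges of weight $1$ and the descending maintrunk segment contributes $\ell$ blocks of shape $(1,2)$ plus the final edge of weight $2$ attaching the Schur $\sigma$-leaf.

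The main obstacle will be stage~(1): proving that the explicit list in~\eqref{eqn:CovSecE} is complete rather than merely contained in the cover. Completeness rests on showing that at each step of the recursion no further capable vertex with the prescribed second derived quotient can exist above depth $\ell+3$; this requires combining the nuclear rank calculations implicit in Theorem~\ref{thm:Metab} with the finite bound $u=8$ supplied by Remark~\ref{rmk:Mainlines}, and its extension to the conjectural range relies on Conjecture~\ref{cnj:Mainlines}.
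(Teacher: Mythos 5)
Your overall strategy---running Algorithm \ref{alg:ShafarevichCover} to couple the top down quotients \(\mathcal{Q}^{(e,k)}_c\) of the cover limit \(\mathcal{C}^{(e)}\) with the bottom up descendants selected by \texttt{IsAdmissible()}, eliminating the non-balanced candidates via the Shafarevich relation-rank bound for imaginary quadratic fields, and reading off the fork-topology invariants as tree bookkeeping---is essentially the proof the paper gives, including the verified range \(0\le\ell\le u=8\) and the identification of \(\mathcal{Q}^{(e,k)}_c\) through the computational isomorphism test of its metabelianization against the selected leaf.

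The substantive weak point is your completeness argument for part (1). You claim that Theorem \ref{thm:Metab} ``bounds the relevant depth uniformly'' and that the needed nuclear rank calculations are ``implicit in Theorem \ref{thm:Metab}''; but that theorem only describes the \emph{metabelian} children of mainline vertices of \(\mathcal{T}^{(2)}\) down to depth \(1\), and says nothing about non-metabelian groups of coclass \(r\ge 3\) whose second derived quotient is \(G_{c,j}^{(2)}\)---which is exactly what \(\mathrm{cov}(G_{c,j}^{(2)})\) consists of, so your step would fail as stated. The paper closes this differently: it asserts (with references to \cite[\S\ 21.2]{Ma6} and \cite{Ma8}) that the \(p\)-multiplicator rank is \(\mu=2\) precisely for the groups \(S_{c,j}^{(\ell+3)}\) with odd \(c\) and \(\mu=3\) for all other members, and that the nuclear rank is \(\nu=1\) only for \(G_{c,j}^{(\ell+2)}\) with even \(c\) (whose relevant child is \(T_{c+1,j}^{(\ell+2)}\)) and \(\nu=0\) otherwise; these rank data terminate the descendant chains and make the list \eqref{eqn:CovSecE} exhaustive, and the same \(\mu\)-values give part (2) at once via \(d_2=\mu\le d_1=2\) for admissible \(3\)-tower groups. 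Two minor corrections: the bound \(u=8\) comes from the choice \texttt{vb:=25} in the execution of Algorithm \ref{alg:ShafarevichCover}, not from Remark \ref{rmk:Mainlines} (which concerns Algorithm \ref{alg:Mainlines}); and the non-metabelian cover members are not found by ``climbing the maintrunk'' itself---they are the admissible non-mainline descendants arising at each bifurcation, adjacent to the maintrunk, as displayed by the projection arrows in Figure \ref{fig:MetabProj}.
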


\begin{proof}
We compare the uniform generator rank \(d_1=2\) of all involved groups \(G_{c,j}^{(r)}\), \(c\ge 4\), \(r\ge 2\), \(1\le j\le 3\),
with their relation rank \(d_2\).
Since \(d_2=\mu\) and the \(p\)-multiplicator rank is \(\mu=2\) for \(S_{c,j}^{(r)}\) with odd \(c=2\ell+5\ge 5\) and \(r=\ell+3\ge 3\),
but \(\mu=3\) otherwise,
only the groups \(S_{c,j}^{(r)}\) are Schur \(\sigma\)-groups with balanced presentation \(d_2=2=d_1\),
and are therefore admissible as \(3\)-tower groups of imaginary quadratic fields \(F\),
according to our corrected version
\cite[\S\ 5, Thm. 5.1, pp. 28--29]{Ma10}
of the Shafarevich Theorem
\cite[Thm. 6, \((18^\prime)\)]{Sh}.
Finally we remark that
the nuclear rank is \(\nu=1\) for \(G_{c,j}^{(r)}\) with even \(c=2\ell+4\), \(r=\ell+2\), and child \(T_{c+1,j}^{(r)}\),
but \(\nu=0\) otherwise.

The execution of Algorithm
\ref{alg:ShafarevichCover}
with input data
\texttt{p:=3}, \texttt{vb:=25},
either \texttt{i:=6}, \texttt{e:=0},
or \texttt{i:=8}, \texttt{e:=1},
and \texttt{cp:=CompactPresentation(SmallGroup(243,i))},
proves the isomorphisms \(\mathcal{Q}^{(e,k)}_c\simeq S_{c,j}^{(\ell+3)}\), \(c=2\ell+5\),
resp. \(\mathcal{Q}^{(e,k)}_c\simeq G_{c,j}^{(\ell+2)}\), \(c=2\ell+4\),
for \(4\le c\le 20\), that is, \(0\le\ell\le u=8\).
The algorithm is initialized by the starting group \(R=M_3^{(2)}\) of coclass \(2\).
The loop navigates through the mainline vertices \(M_{c}^{(2)}\), \(c\ge 3\), of the coclass tree
\(\mathcal{T}^{(2)}\left(M_{3}^{(2)}\right)\).
The subroutine \texttt{IsAdmissible()} tests the transfer kernel type of all descendants and selects
either the unique capable descendant with type \(\mathrm{c}.18\), resp. \(\mathrm{c}.21\), for the flag \texttt{0},
or the unique descendant with type \(\mathrm{E}.6\), resp. \(\mathrm{E}.8\), for the flag \texttt{1},
or the first or second descendant with type \(\mathrm{E}.9\), for the flag \texttt{2}.
The selected non-mainline vertex is always
checked for isomorphism to the metabelianization of the appropriate quotient \(\mathcal{Q}^{(e,k)}_c\).
See also
\cite[\S\ 21.2, pp. 189--193]{Ma6},
\cite[pp. 751--756]{Ma8},
the proof of Theorem
\ref{thm:3ClassTowerLength3},
and Figures
\ref{fig:TreeTopoGStypeE},
\ref{fig:TreeTopoES1typeE},
\ref{fig:TreeTopoES2typeE}.
\end{proof}


Here again, a pure bottom up approach without top down constructions,
instead of using Algorithm
\ref{alg:ShafarevichCover},
is able to reach coclass \(r=32\), nilpotency class \(c=63\), and logarithmic order \(r+c=95\),
without surpassing internal limits of MAGMA,
and strongly supports Conjecture
\ref{cnj:ExplicitCovers}.

\begin{conjecture}
\label{cnj:ExplicitCovers}
Theorem
\ref{thm:ExplicitCovers}
remains true for any upper bound \(u>8\).
\end{conjecture}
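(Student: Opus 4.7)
The plan is to replace the finite computational verification of Algorithm \ref{alg:ShafarevichCover} with a uniform inductive argument exploiting the periodicity of the pruned coclass tree $\mathcal{T}^{(2)}(M_3^{(2)})$ established in Theorem \ref{thm:FirstPeriodicity}. My strategy is to treat the cover limit $\mathcal{C}^{(e)}$ from Definition \ref{dfn:CoverLimit} as a universal object whose finite quotients $\mathcal{Q}_c^{(e,k)}$ realize, simultaneously for all nilpotency classes $c$, every non-trivial member of the cover $\mathrm{cov}(G_{c,j}^{(2)})$, and then to transfer the bounded verification $\ell\le 8$ from Theorem \ref{thm:ExplicitCovers} to arbitrary $\ell$ via purely structural arguments that replace the MAGMA loop.

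First I would show that the family $\bigl(\mathcal{Q}_c^{(e,k)}\bigr)_{c\ge 4}$ forms a coherent projective system whose bonding epimorphisms $\mathcal{Q}_{c+1}^{(e,k)}\twoheadrightarrow\mathcal{Q}_c^{(e,k)}$ have one-dimensional kernels generated by the next iterated commutator $w_{c+1}$, so that they coincide with the parent operator along the maintrunk of the pruned descendant tree. This step rests on the commutator identities visible in the presentation \eqref{eqn:CoverLimit}, together with the definition \eqref{eqn:Quotient} of $\mathcal{Q}_c^{(e,k)}$ via the relators $yw_c^k v_c$ and $zw_c$. I would then set up induction on $\ell$ whose hypothesis asserts (i) $\mathcal{Q}_c^{(e,k)}/(\mathcal{Q}_c^{(e,k)})''\simeq G_{c,j}^{(2)}$ with the correspondence $k\leftrightarrow j$ from \eqref{eqn:IsomCovSecE}, (ii) $\mu(\mathcal{Q}_c^{(e,k)})=2$ for odd $c$ and $\mu=3$ for even $c$, and (iii) the nuclear rank $\nu$ is as stated. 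The base case $0\le\ell\le 8$ is precisely the proved Theorem \ref{thm:ExplicitCovers}. For the step $\ell\to\ell+1$, Theorem \ref{thm:FirstPeriodicity} guarantees that the branches $\mathcal{B}(M_c^{(2)})$ and $\mathcal{B}(M_{c+\lambda}^{(2)})$ of period $\lambda=2$ are isomorphic, and one checks that this branch-isomorphism lifts to the cover layer because the defining relations of $\mathcal{C}^{(e)}$ are stable under the two-step shift along the maintrunk of alternating step sizes $1$ and $2$ described by Theorem \ref{thm:SecondPeriodicity}.

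The main obstacle is the Schur $\sigma$-group condition $\mu(S_{c,j}^{(\ell+3)})=2$, i.e.\ $\dim_{\mathbb{F}_3} H^2(S_{c,j}^{(\ell+3)},\mathbb{F}_3)=2$, for arbitrarily large $\ell$: this is the cohomological input that the Shafarevich bound in \cite[Thm.~5.1]{Ma10} consumes, and it is the exact point where Algorithm \ref{alg:ShafarevichCover} hits the MAGMA word-length ceiling. I expect the cleanest route to go through the cohomological periodicity framework of Eick and Leedham-Green \cite{EkLg}: identifying the ascending family $\bigl(\mathcal{Q}_c^{(e,k)}\bigr)_c$ with the projection of a finitely generated module over the integral group ring of an appropriate pro-$3$ space group along the maintrunk, one can show that $c\mapsto\mu(\mathcal{Q}_c^{(e,k)})$ is ultimately periodic in $c$; the computationally verified values on the base interval then force the constant value $2$ on odd classes throughout. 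A secondary difficulty is to verify that no auxiliary relation outside \eqref{eqn:CoverLimit} is needed to pin down $\mathcal{Q}_c^{(e,k)}$ for every $\ell$; if a refinement of the presentation of $\mathcal{C}^{(e)}$ is required, it must be extracted by a hand analysis of the Schur multiplier of $G_{c,j}^{(2)}$ in the style of Nebelung \cite{Ne}, and this combinatorial commutator calculus is where I expect the technical heart of the argument to lie.
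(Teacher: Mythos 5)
You are proposing a proof of a statement that the paper itself leaves open: Conjecture \ref{cnj:ExplicitCovers} is supported in the paper only by further computation (a pure bottom-up run reaching coclass \(32\), class \(63\)), not by any argument, so there is no paper proof to match. Judged on its own terms, your outline does not close the gap, because the two steps you label as "expected" are precisely the open content of the conjecture. Your induction step \(\ell\to\ell+1\) invokes the periodicity of the maintrunk with alternating step sizes \(1\) and \(2\), i.e.\ Theorem \ref{thm:SecondPeriodicity}; but that theorem is itself only established up to the bound \(u_r=30\) by the same kind of MAGMA verification you are trying to eliminate, and its extension to arbitrary coclass is exactly Conjecture \ref{cnj:SecondPeriodicity}. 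So the argument trades one unproven conjecture for another. Theorem \ref{thm:FirstPeriodicity} (du Sautoy, Eick--Leedham-Green) does not help here: it gives (virtual) periodicity of branches \emph{within} a coclass tree, at fixed coclass, whereas your family \(S_{c,j}^{(\ell+3)}\) has coclass \(\ell+3\) growing with \(\ell\) and lives on infinitely many distinct coclass trees joined by edges of step size \(2\), a regime the classical coclass theorems do not cover.

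The same objection applies to your treatment of the main obstacle, the relation-rank condition. The claim that \(c\mapsto\mu(\mathcal{Q}_c^{(e,k)})\) is ultimately periodic cannot be extracted from the Eick--Leedham-Green cohomological framework, since that framework is again a fixed-coclass statement; establishing such periodicity transversally to the coclass filtration is the genuinely new phenomenon the paper is describing, and you assert it rather than prove it. Likewise, the statement that the branch isomorphism "lifts to the cover layer because the defining relations of \(\mathcal{C}^{(e)}\) are stable under the two-step shift" is not an argument: no shift operator on the pruned tree across bifurcations is constructed, and constructing one --- equivalently, proving for every \(c\) that \(\mathcal{Q}_c^{(e,k)}\) is the claimed tree vertex and that the bonding maps \(\mathcal{Q}_{c+1}^{(e,k)}\twoheadrightarrow\mathcal{Q}_c^{(e,k)}\) realize the parent operator --- is the technical heart you explicitly defer to an unperformed Nebelung-style commutator calculus. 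In short, your plan correctly locates the difficulties and sketches a reasonable strategy, but as written it establishes nothing beyond the computationally verified range \(\ell\le 8\) already covered by Theorem \ref{thm:ExplicitCovers}.
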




\begin{figure}[ht]
\caption{Projections \(\mathcal{Q}^{(e,k)}_c\to\mathcal{Q}^{(e,k)}_c/\left(\mathcal{Q}^{(e,k)}_c\right)^{\prime\prime}\) of the covers onto their metabelianizations}
\label{fig:MetabProj}

\input{MetabProj}

\end{figure}



Figure
\ref{fig:MetabProj}
shows exactly the same situation as Figure
\ref{fig:GraphDissection},
supplemented by blue arrows indicating the projections
of the quotients \(\mathcal{Q}^{(e,k)}_c\) onto their metabelianizations,
that is, \(S_{c,j}^{(\ell+3)}\to G_{c,j}^{(2)}\), for odd class \(c=2\ell+5\), in the right diagram with green branches,
and \(G_{c,j}^{(\ell+2)}\to G_{c,j}^{(2)}\), for even class \(c=2\ell+4\), in the left diagram with red branches.
For \(c=4\), a degeneration occurs, since \(\mathcal{Q}^{(e,k)}_4\) is metabelian already,
indicated by surrounding blue circles.

Strictly speaking, the caption of Figure
\ref{fig:MetabProj}
in its full generality is valid for \(e=1\), \(M_{3}^{(2)}=\langle 243,8\rangle\) only.
For \(e=0\), \(M_{3}^{(2)}=\langle 243,6\rangle\),
all blue arrows have the same meaning as before
but the interpretation of the covers as quotients \(\mathcal{Q}^{(e,k)}_c\) is slightly restricted.
Whereas we have the following supplement to Formula
\eqref{eqn:IsomCovSecE}:
\begin{equation}
\label{eqn:IsomCovE14}
\text{type }\mathrm{E}.14:\ \mathcal{Q}^{(0,-1)}_c\simeq
\begin{cases}
S_{c,3}^{(\ell+3)} & \text{ for odd class } c=2\ell+5,\ 0\le\ell\le u, \\
G_{c,2}^{(\ell+2)} & \text{ for even class } c=2\ell+4,\ 0\le\ell\le u, \\
\end{cases}
\end{equation}
the quotients \(\mathcal{Q}^{(0,+1)}_c\) lead into a completely different realm,
namely the complicated brushwood of the complex transfer kernel type \(\mathrm{H}.4\).

Figure
\ref{fig:ComplexType}
shows three pruned descendant trees \(\mathcal{T}_\ast(\mathfrak{R})\) with roots
\(\mathfrak{R}=\langle 243,4\rangle\), \(\mathfrak{R}=\langle 6561,615\rangle\), and \(\mathfrak{R}=\langle 6561,613\rangle-\#1;1-\#2;1\),
all of whose vertices are of type \(\mathrm{H}.4\) exclusively.
We restrict the trees to \(\sigma\)-groups indicated by green color.
The top vertex \(\langle 27,3\rangle\) is intentionally drawn twice
to avoid an overlap of the dense trees and to admit a uniform representation of periodic bifurcations.
The tree with root \(\langle 243,4\rangle\) is not concerned by the quotients \(\mathcal{Q}^{(0,+1)}_c\).
It is sporadic and consists of periodically repeating finite saplings of depth \(2\) and increasing coclass \(2,3,\ldots\).
Connected by the maintrunk with vertices of type \(\mathrm{c}.18\) (red color)
in the descendant tree \(\mathcal{T}(\langle 243,6\rangle)\),
the trees with roots \(\langle 6561,615\rangle\) and \(\langle 6561,613\rangle-\#1;1-\#2;1\)
form the beginning of an infinite sequence of similar trees, which are, however, not isomorphic as graphs,
since the depth of the constituting saplings increases in steps of \(2\).
The projections of the quotients \(\mathcal{Q}^{(0,+1)}_c\) with odd class \(c\in\lbrace 5,7\rbrace\)
onto their metabelianizations are indicated by blue arrows.



\begin{figure}[ht]
\caption{Branches of \(\sigma\)-groups with complex type \(\mathrm{H}.4\) connected by the maintrunk}
\label{fig:ComplexType}

\input{ComplexType}

\end{figure}



\subsection{Topologies in descendant trees}
\label{ss:TreeTopologies}
\noindent
Tree topologies describe the mutual location
of distinct higher \(p\)-class groups \(\mathrm{G}_p^{(m)}{F}\) and \(\mathrm{G}_p^{(n)}{F}\),
with \(n>m\ge 1\),
of an algebraic number field \(F\).
The case \((m,n)=(3,4)\) will be crucial for finding the first examples of
\textit{four-stage towers} of \(p\)-class fields
with length \(\ell_p{F}:=\mathrm{dl}(\mathrm{G}_p^{(\infty)}{F})=4\),
which are unknown up to now, for any prime \(p\ge 2\).
Fork topologies with \((m,n)=(2,3)\) have proved to be essential for discovering
\(p\)-class towers with length \(\ell_p{F}=3\), for odd primes \(p\ge 3\).
In
\cite[Prp. 5.3, p. 89]{Ma15},
we have pointed out that the \textit{qualitative} topology problem for \((m,n)=(1,2)\)
is trivial, since the fork of \(\mathrm{G}_p^{(1)}{F}\) and \(\mathrm{G}_p^{(2)}{F}\)
is simply the abelian root \(\mathrm{G}_p^{(1)}{F}\simeq\mathrm{Cl}_p{F}\) of the entire relevant descendant tree.
However, the \textit{quantitative} structure of the root path between
\(\mathrm{G}_p^{(2)}{F}\) and \(\mathrm{G}_p^{(1)}{F}\) is not at all trivial
and can be given in a general theorem for \(\mathrm{Cl}_p{F}\simeq (p,p)\) and \(p\in\lbrace 2,3\rbrace\) only.
In the following Theorem
\ref{thm:MetabDescTopo},
we establish a purely group theoretic version of this result
by replacing \(\mathrm{G}_p^{(2)}{F}\) with an arbitrary \textit{metabelian} \(3\)-group \(\mathfrak{M}\)
having abelianization \(\mathfrak{M}/\mathfrak{M}^\prime\) of type \((3,3)\).
Any attempt to determine the group \(G:=\mathrm{Gal}(F_p^{(\infty)}/F)\)
of the \(p\)-class tower \(F_p^{(\infty)}\) of an algebraic number field \(F\)
begins with a search for the metabelianization \(\mathfrak{M}:=G/G^{\prime\prime}\),
i.e., the second derived quotient, of the \(p\)-tower group \(G\).
\(\mathfrak{M}\) is also called the \textit{second \(p\)-class group} \(\mathrm{Gal}(F_p^{(2)}/F)\) of \(F\),
and \(F_p^{(2)}\) can be viewed as a metabelian approximation of the \(p\)-class tower \(F_p^{(\infty)}\).
In the case of the smallest odd prime \(p=3\) and a number field \(F\)
with \(3\)-class group \(\mathrm{Cl}_3{F}\) of type \((3,3)\),
the structure of the root path from \(\mathfrak{M}\) to the root \(\langle 9,2\rangle\)
is known explicitly.
For its description, it suffices
to use the set of possible transfer kernel types 
\[\mathrm{X}\in\lbrace\mathrm{A},\mathrm{D},\mathrm{E},\mathrm{F},\mathrm{G},\mathrm{H},\mathrm{a},\mathrm{b},\mathrm{c},\mathrm{d}\rbrace\]
of the ancestors \(\pi^j\mathfrak{M}\), \(0\le j\le\ell\),
and the symbol \(\stackrel{s}{\to}\) for a weighted edge of step size \(s\ge 1\)
with formal exponents denoting iteration.
A capable vertex is indicated by an asterisk \(\mathrm{X}^\ast\).


\begin{theorem}
\label{thm:MetabDescTopo}
(Periodic root paths.) \\
There exist basically three kinds of root paths \(P:=\left(\pi^j\mathfrak{M}\right)_{0\le j\le\ell}\)
of metabelian \(3\)-groups \(\mathfrak{M}\) with abelianization \(\mathfrak{M}/\mathfrak{M}^\prime\) of type \((3,3)\),
which are located on coclass trees.
Let \(c\) denote the nilpotency class \(\mathrm{cl}(\mathfrak{M})\) and
\(r\) the coclass \(\mathrm{cc}(\mathfrak{M})\) of \(\mathfrak{M}\).
\begin{enumerate}
\item
If \(r=1\) and \(c\ge 1\), then \(P=\mathrm{X}\left\lbrace\stackrel{1}{\to}\mathrm{a}^\ast\right\rbrace^{c-1}\),
where \(\mathrm{X}\in\lbrace\mathrm{A},\mathrm{a},\mathrm{a}^\ast\rbrace\).
\item
If \(r=2\) and \(c\ge 3\), then
either \(P=\mathrm{X}
\left\lbrace\stackrel{1}{\to}\mathrm{b}^\ast\right\rbrace^{c-3}
\stackrel{2}{\to}\mathrm{a}^\ast\stackrel{1}{\to}\mathrm{a}^\ast\),
where \(\mathrm{X}\in\lbrace\mathrm{d},\mathrm{b},\mathrm{b}^\ast\rbrace\),
or \(P=\mathrm{X}
\left\lbrace\stackrel{1}{\to}\mathrm{c}^\ast\right\rbrace^{c-3}
\stackrel{2}{\to}\mathrm{a}^\ast\stackrel{1}{\to}\mathrm{a}^\ast\),
where \(\mathrm{X}\in\lbrace\mathrm{E},\mathrm{G}^\ast,\mathrm{H}^\ast,\mathrm{c}^\ast\rbrace\).
An additional variant arises for \(r=2\), \(c\ge 5\), with
\(P=\mathrm{X}
\stackrel{1}{\to}\mathrm{X}^\ast
\left\lbrace\stackrel{1}{\to}\mathrm{c}^\ast\right\rbrace^{c-4}
\stackrel{2}{\to}\mathrm{a}^\ast\stackrel{1}{\to}\mathrm{a}^\ast\),
where \(\mathrm{X}\in\lbrace\mathrm{G},\mathrm{H}\rbrace\).
\item
If \(r\ge 3\) and \(c\ge r+1\), then
either \(P=\mathrm{X}
\left\lbrace\stackrel{1}{\to}\mathrm{b}^\ast\right\rbrace^{c-(r+1)}
\left\lbrace\stackrel{2}{\to}\mathrm{b}^\ast\right\rbrace^{r-2}
\stackrel{2}{\to}\mathrm{a}^\ast\stackrel{1}{\to}\mathrm{a}^\ast\),
where \(\mathrm{X}\in\lbrace\mathrm{d},\mathrm{b},\mathrm{b}^\ast\rbrace\),
or \(P=\mathrm{X}
\left\lbrace\stackrel{1}{\to}\mathrm{d}^\ast\right\rbrace^{c-(r+1)}
\left\lbrace\stackrel{2}{\to}\mathrm{b}^\ast\right\rbrace^{r-2}
\stackrel{2}{\to}\mathrm{a}^\ast\stackrel{1}{\to}\mathrm{a}^\ast\),
where \(\mathrm{X}\in\lbrace\mathrm{F},\mathrm{G}^\ast,\mathrm{H}^\ast,\mathrm{d}^\ast\rbrace\).
An additional variant arises for \(r\ge 3\), \(c\ge r+3\), with \\
\(P=\mathrm{X}
\stackrel{1}{\to}\mathrm{X}^\ast
\left\lbrace\stackrel{1}{\to}\mathrm{d}^\ast\right\rbrace^{c-(r+2)}
\left\lbrace\stackrel{2}{\to}\mathrm{b}^\ast\right\rbrace^{r-2}
\stackrel{2}{\to}\mathrm{a}^\ast\stackrel{1}{\to}\mathrm{a}^\ast\),
where \(\mathrm{X}\in\lbrace\mathrm{G},\mathrm{H}\rbrace\).
\end{enumerate}
In particular, the maximal possible step size is \(s=2\),
and the \(r-1\) edges with step size \(s=2\) arise successively without gaps at the end of the path,
except the trailing edge of step size \(s=1\).
\end{theorem}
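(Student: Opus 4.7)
The plan is to prove Theorem \ref{thm:MetabDescTopo} by induction on the coclass $r$ of $\mathfrak{M}$, exploiting the decomposition of the pruned descendant tree $\mathcal{T}(\langle 9,2\rangle)$ into coclass trees $\mathcal{T}^{(r)}$ linked by the maintrunk of step-$2$ edges established in Theorem \ref{thm:SecondPeriodicity}. The guiding principle is that, within a single coclass tree, all edges have weight $s=1$ (as shown in \S\ \ref{s:GroupTheory} for minimal trees), whereas every transition between consecutive coclass trees along the maintrunk has weight $s=2$. Thus each root path from $\mathfrak{M}$ factors into a string of $c-(2r-1)$ step-$1$ edges inside the coclass-$r$ tree $\mathcal{T}^{(r)}(M_{2r-1}^{(r)})$, followed by an alternating sequence of step-$2$ and step-$1$ edges along the maintrunk down to $M_3^{(2)}$, and finally the tail $\stackrel{2}{\to}\mathrm{a}^\ast\stackrel{1}{\to}\mathrm{a}^\ast$ through the unique capable extra-special quotient of order $27$ to the root $\langle 9,2\rangle$. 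The arithmetic $(c-(2r-1))+(r-1)\cdot 2+(r-2)\cdot 1+1=c+r-2$ matches the logarithmic order difference between $\mathfrak{M}$ and $\langle 9,2\rangle$, providing a built-in consistency check.

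For the base case $r=1$, I would invoke the classical Scholz--Taussky classification of metabelian $3$-groups of coclass $1$ with abelianization of type $(3,3)$: this is the single mainline with TKT $\mathrm{a}$ at every capable vertex, and leaves of TKT $\mathrm{A}$ (depth $0$) or $\mathrm{a}$ (depth $1$), giving precisely the claimed form $P=\mathrm{X}\{\stackrel{1}{\to}\mathrm{a}^\ast\}^{c-1}$. For $r=2$, I would apply Theorem \ref{thm:Metab} (Nebelung) directly: the mainline vertices of both coclass trees $\mathcal{T}^{(2)}(\langle 243,6\rangle)$ and $\mathcal{T}^{(2)}(\langle 243,8\rangle)$ have TKT $\mathrm{c}.18$ or $\mathrm{c}.21$, abbreviated $\mathrm{b}^\ast$ and $\mathrm{c}^\ast$ respectively, and every off-mainline child is a vertex of TKT type $\mathrm{d}$, $\mathrm{E}$, $\mathrm{G}$ or $\mathrm{H}$ as listed. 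Following Theorem \ref{thm:Metab}, the immediate parent of each mainline root $M_3^{(2)}$ is the unique capable extra-special group of order $27$ and TKT $\mathrm{a}$, reached via a step-$2$ edge (nuclear rank $2$ of $\langle 9,2\rangle$), and that group in turn has parent $\langle 9,2\rangle$ via one final step-$1$ edge. The additional variant for $c\ge 5$ with starting type $\mathrm{G}$ or $\mathrm{H}$ is handled by noting that these capable vertices at depth $1$ possess a unique capable child of TKT $\mathrm{X}^\ast$ before rejoining the mainline.

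For $r\ge 3$, I would proceed inductively, using Theorem \ref{thm:SecondPeriodicity} to identify the root of $\mathcal{T}^{(r)}(M_{2r-1}^{(r)})$ as the step-$2$ immediate descendant of $M_{2r-2}^{(r-1)}$, which in turn lies above $M_{2r-3}^{(r-1)}$ by a step-$1$ edge. Iterating the pairing (step-$2$ to parent coclass tree, step-$1$ within it) $r-2$ times transports the path to $M_3^{(2)}$ and reduces the problem to the $r=2$ tail already analysed. The induction hypothesis supplies the TKT labels $\mathrm{b}^\ast$ on these intermediate maintrunk vertices. The structure of off-mainline children at each coclass level matches that of $r=2$ by the graph isomorphism $\mathcal{T}^{(r)}(M_{2r-1}^{(r)})\simeq\mathcal{T}^{(2)}(M_3^{(2)})$, yielding the allowed starting types $\mathrm{X}\in\{\mathrm{d},\mathrm{b},\mathrm{b}^\ast\}$ in the mainline-of-type-$\mathrm{b}$ scenario and $\mathrm{X}\in\{\mathrm{F},\mathrm{G}^\ast,\mathrm{H}^\ast,\mathrm{d}^\ast\}$ in the mainline-of-type-$\mathrm{d}$ scenario, with the $\mathrm{G},\mathrm{H}$ variant handled as before. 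The concluding assertion that step-$2$ edges cluster at the end of the path (except for the last trailing step-$1$) is a direct consequence: the only step-$2$ edges in $\mathcal{T}(\langle 9,2\rangle)$ are those of the maintrunk, which sit between coclass trees, and $\mathfrak{M}$ enters the maintrunk only after it has climbed to the root of its own coclass tree.

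The main obstacle is not the overall bookkeeping but the precise TKT identification at each off-mainline starting vertex, because the distinction between types $\mathrm{b}$ and $\mathrm{d}$, or between $\mathrm{E},\mathrm{F},\mathrm{G},\mathrm{H}$, depends on the action of generators on the last two layers of the lower central series and is genuinely sensitive to the commutator calculus in the relevant quotient of $\mathcal{L}_{\pm}$; I would handle this by bootstrapping from the explicit presentations in Definition \ref{dfn:Mainlines} and Theorem \ref{thm:Mainlines} rather than re-deriving the types from scratch. A secondary technical point is to confirm that the step size never exceeds $2$ on the maintrunk, which amounts to showing that the nuclear rank of every mainline vertex $M_{2r}^{(r)}$ equals exactly $2$ (noted in Theorem \ref{thm:SecondPeriodicity}) and that no higher-rank bifurcation intervenes between consecutive coclass trees; this follows from the explicit rank computations already available for the limit group $\mathcal{L}_{\pm}$.
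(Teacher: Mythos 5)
There is a genuine gap, and it sits exactly where you place the main weight of the argument: the case \(r\ge 3\). You propose to transport the root path along the maintrunk of Theorem \ref{thm:SecondPeriodicity} and to import the structure of off-mainline children through the graph isomorphism \(\mathcal{T}^{(r)}(M_{2r-1}^{(r)})\simeq\mathcal{T}^{(2)}(M_3^{(2)})\) of pruned coclass trees. But every ancestor \(\pi^j\mathfrak{M}\) of a metabelian group is itself metabelian, and the paper states (\S\ \ref{ss:CoverMetab}) that in the pruned tree \(\mathcal{T}(R)\) only the coclass-\(2\) subtree contains metabelian vertices; the maintrunk vertices \(M_{2r}^{(r)}\), \(M_{2r+1}^{(r+1)}\) with \(r\ge 3\) are non-metabelian and therefore never occur on the root path of any group covered by Theorem \ref{thm:MetabDescTopo}. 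The metabelian \(3\)-groups of coclass \(r\ge 3\) live on different coclass trees, with mainlines of type \(\mathrm{b}^\ast\) or \(\mathrm{d}^\ast\), and their root paths run through metabelian type-\(\mathrm{b}^\ast\) vertices. Your decomposition also produces the wrong shape: a string of \(c-(2r-1)\) step-\(1\) edges inside the coclass-\(r\) tree followed by an alternating \((2,1,2,1,\ldots)\)-pattern along the maintrunk, whereas the theorem asserts \(c-(r+1)\) step-\(1\) edges followed by \(r-1\) \emph{consecutive} step-\(2\) edges (plus the trailing step-\(1\) edge); the agreement of the total weighted length \(c+r-2\) hides this discrepancy, so your "consistency check" does not detect that the proposed path is not the actual root path. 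Relatedly, the closing claim that the only step-\(2\) edges in \(\mathcal{T}(\langle 9,2\rangle)\) are those of the maintrunk is false: bifurcations occur at many capable vertices (e.g.\ \(\langle 243,4\rangle\) is a step-\(2\) child of \(\langle 27,3\rangle\), and the step-\(2\) edges inside the metabelian skeleton at type-\(\mathrm{b}\) vertices are precisely what part (3) describes).

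There are also smaller gaps in the low-coclass cases. Theorem \ref{thm:Metab} describes only the two coclass trees with roots \(\langle 243,6\rangle\) and \(\langle 243,8\rangle\) and only down to depth \(1\); it neither yields the first alternative of part (2) (mainline of type \(\mathrm{b}^\ast\), root \(\langle 243,3\rangle\), starting types \(\mathrm{X}\in\lbrace\mathrm{d},\mathrm{b},\mathrm{b}^\ast\rbrace\)) nor the depth-\(2\) variant \(\mathrm{X}\stackrel{1}{\to}\mathrm{X}^\ast\) for \(\mathrm{X}\in\lbrace\mathrm{G},\mathrm{H}\rbrace\). The paper's own proof is, in substance, a citation of Nebelung's classification of the metabelian skeleton (mainline and scaffold types of all relevant coclass trees), equivalently the lower-central quotient structure recalled in Remark \ref{rmk:MetabDescTopo}; your proposal does not supply a substitute for that input where it is actually needed, namely for the metabelian groups of coclass \(r\ge 3\) and for the type identifications \(\mathrm{b}\) versus \(\mathrm{d}\) and \(\mathrm{E},\mathrm{F},\mathrm{G},\mathrm{H}\) at the starting vertex, which you yourself flag as the hard point but then defer to presentations of \(\mathcal{L}_{\pm}\) that only describe the (pruned, largely non-metabelian) mainlines of \(\mathcal{T}(\langle 243,6\rangle)\) and \(\mathcal{T}(\langle 243,8\rangle)\).
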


\begin{proof}
\(\mathrm{X}\) always denotes the type of the starting vertex \(\mathfrak{M}\).
The remaining vertices of the root path form the \textit{scaffold},
which connects the starting vertex with the ending vertex (the root \(R=\langle 9,2\rangle\)).
The unique coclass tree \(\mathcal{T}^{(1)}\langle 9,2\rangle\) with \(r=1\) has a mainline of type \(\mathrm{a}^\ast\).
Two of the coclass trees \(\mathcal{T}^{(2)}\langle 243,n\rangle\) with \(r=2\), those with \(n\in\lbrace 6,8\rbrace\),
have mainlines of type \(\mathrm{c}^\ast\) and an additional scaffold of type  \(\mathrm{a}^\ast\).
For \(n=3\), the mainline is of type \(\mathrm{b}^\ast\).
The coclass trees \(\mathcal{T}^{(r)}\) with \(r\ge 3\) behave uniformly
with mainlines of type \(\mathrm{b}^\ast\) or \(\mathrm{d}^\ast\) and scaffold types \(\mathrm{b}^\ast\), \(\mathrm{a}^\ast\).
For details, see Nebelung
\cite[Satz 3.3.7, p. 70, Lemma 5.2.6, p. 183, Satz 6.9, p. 202, Satz 6.14, p. 208]{Ne}.
\end{proof}

\begin{remark}
\label{rmk:MetabDescTopo}
The final statement of Theorem
\ref{thm:MetabDescTopo}
is a graph theoretic reformulation of the quotient structure
of the lower central series \(\left(\gamma_j{\mathfrak{M}}\right)_{j\ge 1}\)
of a metabelian \(3\)-group \(\mathfrak{M}\),
observing that the root \(R=\langle 9,2\rangle\) corresponds to the bicyclic quotient \(\gamma_1/\gamma_2\simeq (3,3)\)
and the conspicuous trailing edge \(\stackrel{1}{\to}\mathrm{a}^\ast\) corresponds to the cyclic bottleneck \(\gamma_2/\gamma_3\simeq (3)\).
The structure is drawn ostensively in Formula (2.12) of
\cite[\S\ 2.2]{Ma12},
using the CF-invariant \(e=r+1\) instead of the coclass \(r\).
\end{remark}


Theorem
\ref{thm:MetabDescTopo}
concerns periodic vertices on coclass trees.
Sporadic vertices outside of coclass trees
must be treated separately in Corollary
\ref{cor:MetabDescTopo}.

\begin{corollary}
\label{cor:MetabDescTopo}
(Sporadic root paths.) \\
As before, let \(\mathfrak{M}\) be a metabelian \(3\)-group
with abelianization \(\mathfrak{M}/\mathfrak{M}^\prime\simeq(3,3)\),
nilpotency class \(c:=\mathrm{cl}(\mathfrak{M})\) and
coclass \(r:=\mathrm{cc}(\mathfrak{M})\).
Assume that \(\mathfrak{M}\) is located outside of coclass trees.
\begin{enumerate}
\item
If \(r=2\) and \(c=3\), then
\(P=\mathrm{X}
\stackrel{2}{\to}\mathrm{a}^\ast\stackrel{1}{\to}\mathrm{a}^\ast\),
where \(\mathrm{X}\in\lbrace\mathrm{D},\mathrm{G}^\ast,\mathrm{H}^\ast\rbrace\).
\item
If \(r=2\) and \(c=4\), then
\(P=\mathrm{X}
\stackrel{1}{\to}\mathrm{X}^\ast
\stackrel{2}{\to}\mathrm{a}^\ast\stackrel{1}{\to}\mathrm{a}^\ast\),
where \(\mathrm{X}\in\lbrace\mathrm{G},\mathrm{H}\rbrace\).
\item
If \(r\ge 3\) and \(c=r+1\), then
\(P=\mathrm{X}
\left\lbrace\stackrel{2}{\to}\mathrm{b}^\ast\right\rbrace^{r-2}
\stackrel{2}{\to}\mathrm{a}^\ast\stackrel{1}{\to}\mathrm{a}^\ast\),
where \(\mathrm{X}\in\lbrace\mathrm{F},\mathrm{G}^\ast,\mathrm{H}^\ast\rbrace\).
\item
If \(r\ge 3\) and \(c=r+2\), then
\(P=\mathrm{X}
\stackrel{1}{\to}\mathrm{X}^\ast
\left\lbrace\stackrel{2}{\to}\mathrm{b}^\ast\right\rbrace^{r-2}
\stackrel{2}{\to}\mathrm{a}^\ast\stackrel{1}{\to}\mathrm{a}^\ast\),
where \(\mathrm{X}\in\lbrace\mathrm{G},\mathrm{H}\rbrace\).
\end{enumerate}
\end{corollary}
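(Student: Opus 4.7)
The plan is to proceed in parallel with the proof of Theorem \ref{thm:MetabDescTopo}, but restricted to the finite sporadic region of metabelian $3$-groups with commutator quotient of type $(3,3)$. Using Nebelung's classification \cite{Ne}, the sporadic vertices at fixed coclass $r\ge 2$ are confined to the two nilpotency classes $c=r+1$ (the ``sporadic roots'') and $c=r+2$ (their immediate descendants of step size $1$), which matches the four cases of the statement. No sporadic vertex with commutator quotient $(3,3)$ exists at class $c\ge r+3$ for any $r\ge 2$, so once the root path leaves the sporadic stratum of coclass $r$ it must land in a coclass tree of coclass strictly less than $r$.

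The core computation is to determine the action of the parent operator $\pi$ on each sporadic vertex. For cases (1) and (3) I would start at a sporadic root $\mathfrak{M}$ of class $c=r+1$ and verify from Nebelung's presentations that $\pi\mathfrak{M}$ has coclass $r-1$, i.e., the transition is an edge of step size $s=2$; iterating produces $r-1$ consecutive step-$2$ edges whose intermediate endpoints are sporadic vertices of type $\mathrm{b}^\ast$ on the respective coclass trees, until the coclass-$1$ mainline $\mathcal{T}^{(1)}\langle 9,2\rangle$ is reached and the path closes with the trailing edge $\stackrel{1}{\to}\mathrm{a}^\ast$ of step size $1$ into the root $R=\langle 9,2\rangle$. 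For cases (2) and (4), the vertices of types $\mathrm{G}$, $\mathrm{H}$ at class $c=r+2$ are precisely the step-$1$ children of the capable sporadic roots of types $\mathrm{G}^\ast$, $\mathrm{H}^\ast$ at class $c=r+1$, so their root paths are obtained by prepending the single edge $\mathrm{X}\stackrel{1}{\to}\mathrm{X}^\ast$ onto the paths already computed in cases (1) and (3).

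The main obstacle will be the case-by-case identification of the intermediate transfer kernel types (specifically the uniform appearance of $\mathrm{b}^\ast$ for $r\ge 3$) together with the verification that no further sporadic strata occur higher up each coclass tree. Both points are addressed by Nebelung's exhaustive enumeration of the $p$-multiplicator quotients at each descendant step in \cite[Satz 3.3.7, Lemma 5.2.6, Satz 6.9, Satz 6.14]{Ne}, which pins down the parent operator uniquely, so that the types along the root path can be read off directly. Once this bookkeeping is complete, the four formulas of the corollary follow in the same fashion as the three cases of Theorem \ref{thm:MetabDescTopo}, and the concluding remark of that theorem on the placement of the $r-1$ step-$2$ edges at the end of the path is automatically inherited.
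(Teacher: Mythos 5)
Your proposal is correct and follows essentially the same route as the paper: the paper's proof is simply a reference back to the argument for Theorem \ref{thm:MetabDescTopo}, i.e.\ to Nebelung's classification \cite{Ne}, which is exactly the source you invoke to pin down the parent operator, the step sizes, and the scaffold types along the sporadic root paths. Your additional bookkeeping (confinement of sporadic vertices to classes \(c=r+1,\ r+2\), and reduction of cases (2), (4) to (1), (3) by prepending the edge \(\mathrm{X}\stackrel{1}{\to}\mathrm{X}^\ast\)) is consistent with that classification and with the statement.
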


\begin{proof}
As in the proof of Theorem
\ref{thm:MetabDescTopo},
see the dissertation of Nebelung
\cite{Ne}.
\end{proof}



\subsection{Computing Artin patterns of \(p\)-groups}
\label{ss:TransferKernelType}
\noindent
In both Algorithms
\ref{alg:Mainlines}
and
\ref{alg:ShafarevichCover},
we made use of a subroutine \texttt{IsAdmissible()}
which filters \(p\)-groups \(G\) with abelianization \(G/G^\prime\simeq (p,p)\)
having a prescribed transfer kernel type (TKT).
Since an algorithm of this kind is not implemented in MAGMA,
we briefly communicate a succinct form of the code for this subroutine.

\begin{algorithm}
\label{alg:TransferKernelType}
(Transfer kernel type.) \\
\textbf{Input:} a prime number \(p\) and a finite \(p\)-group \texttt{G}.\\
\textbf{Code:}
\texttt{
\begin{tabbing}
   for \= for \= for \= for \= for \= for \= \kill
         if ([p,p] eq AbelianQuotientInvariants(G)) then\+\\
            x := G.1;
            y := G.2; // main generators\\
            A := [];
            B := []; // generators and transversal\\
            Append(\(\sim\)A,y);\\
            Append(\(\sim\)B,x);\\
            for e in [0..p-1] do\+\\
               Append(\(\sim\)A,x\(\ast\)y\({}\,\hat{}\,{}\)e);\\
               Append(\(\sim\)B,y);\-\\
            end for;\\
            DG := DerivedSubgroup(G);\\
            nTotal := 0;
            nFixed := 0;\\
            TKT := [];\\
            for i in [1..p+1] do\+\\
               M := sub<G|A[i],DG>;\\
               DM := DerivedSubgroup(M);\\
               AQM,pr := M/DM;\\
               ImA := (A[i]\(\ast\)B[i]\({}\,\hat{}\,{}\)-1)\({}\,\hat{}\,{}\)p\(\ast\)B[i]\({}\,\hat{}\,{}\)p; // inner transfer\\
               ImB := B[i]\({}\,\hat{}\,{}\)p; // outer transfer\\
               T := hom<G->AQM|<A[i],(ImA)@pr>,<B[i],(ImB)@pr>>;\\
               KT := sub<G|DG,Kernel(T)>;\\
               if KT eq G then // total kernel\+\\
                  Append(\(\sim\)TKT,0);\\
                  nTotal := nTotal+1;\-\\
               else\+\\
                  for j in [1..p+1] do\+\\
                     if A[j] in KT then\+\\
                        Append(\(\sim\)TKT,j);\\
                        if (i eq j) then // fixed point\+\\
                           nFixed := nFixed+1;\-\\
                        end if;\-\\
                     end if;\-\\
                  end for;\-\\
               end if;\-\\
            end for;\\
            image := [];\\
            for i in [1..p+1] do\+\\
               if not (TKT[i] in image) then\+\\
                  Append(\(\sim\)image,TKT[i]);\-\\
               end if;\-\\
            end for;\\
            occupation := \(\#\)image;\\
            repetitions := 0; // maximal occupation number\\
            intersection := 0; // meet of repetitions and fixed points\\                     
            doublet := 0;\\
            for digit in [1..p+1] do\+\\
               counter := 0;\\
               for j in [1..\(\#\)TKT] do\+\\
                  if (digit eq TKT[j]) then\+\\
                     counter := counter + 1;\-\\
                  end if;\-\\
               end for;\\
               if (counter ge 2) then\+\\
                  doublet := digit;\-\\
               end if;\\
               if (counter gt repetitions) then\+\\
                  repetitions := counter;\-\\
               end if;\-\\
            end for;\\
            if (doublet ge 1) then\+\\
               if (doublet eq TKT[doublet]) then\+\\
                  intersection := 1;\-\\
               end if;\-\\
            end if;\-\\
         end if;
\end{tabbing}
}
\noindent
\textbf{Output:}
transfer kernel type \texttt{TKT}, number \texttt{nTotal} of total kernels, number \texttt{nFixed} of fixed points,
and further invariants \texttt{occupation,repetitions,intersection} describing the orbit of the TKT.
\end{algorithm}

The output of Algorithm
\ref{alg:TransferKernelType}
is used for the subroutine \texttt{IsAdmissible(G,p,t)} in dependence on the parameter flag \texttt{t}.
When the root \(R=\langle 243,8\rangle\) is selected for the tree \(\mathcal{T}(R)\)
the return value is determined in the following manner:
\texttt{
\begin{tabbing}
   for \= for \kill
            if (0 eq t) then\+\\
               return ((1 eq nTotal) and (2 eq nFixed)); // type c.21\-\\
            elif (1 eq t) then\+\\
               return ((0 eq nTotal) and (3 eq nFixed)); // type E.8\-\\
            elif (2 eq t) then\+\\
               return ((0 eq nTotal) and (2 eq nFixed) and (3 eq occupation)); // type E.9\-\\
            end if;
\end{tabbing}
}
For the root \(R=\langle 243,6\rangle\), we have
\texttt{
\begin{tabbing}
   for \= for \kill
            if (0 eq t) then\+\\
               return ((1 eq nTotal) and (0 eq nFixed)); // type c.18\-\\
            elif (1 eq t) then\+\\
               return ((0 eq nTotal) and (1 eq nFixed)); // type E.6\-\\
            elif (2 eq t) then\+\\
               return ((0 eq nTotal) and (0 eq nFixed) and (3 eq occupation)); // type E.14\-\\
            end if;
\end{tabbing}
}



\subsection{Benefits and drawbacks of bottom up and top down techniques}
\label{ss:BenefitDrawback}
\noindent
In this chapter, we have presented several convenient ways
of expressing information about \textit{infinite sequences} of finite \(p\)-groups.
Each of them has its benefits and drawbacks.

The \textit{bottom up strategy} of constructing finite \(p\)-groups
as successive extensions of a (metabelian or even abelian) starting group \(R\),
called the \textit{root},
by recursive applications of the \(p\)-group algorithm by Newman
\cite{Nm}
and O'Brien
\cite{OB}
has the benefit of visualizing the graph theoretic \textit{root path} in the descendant tree \(\mathcal{T}(R)\).
Its implementation in MAGMA is incredibly stable and robust
without surpassing any internal limits up to logarithmic orders of \(95\) and even more.
Only the consumption of CPU time becomes considerable in such extreme regions.

The \textit{top down strategy} of expressing finite \(p\)-groups
as quotients of an infinite pro-\(p\) group with given pro-\(p\) presentation
has the benefit of including non-metabelian groups with arbitrary coclass \(r\ge 3\),
periodic mainline vertices in Algorithm
\ref{alg:Mainlines}
and sporadic Schur \(\sigma\)-leaves in Algorithm
\ref{alg:ShafarevichCover}.
The drawback is that the evaluation of the pro-\(p\) presentation in MAGMA
exceeds the maximal permitted word length for nilpotency class \(c\ge 36\).

Up to this point, we have not yet touched upon \textit{parametrized polycyclic power-commutator presentations}
\cite{Ma5c}.
For the root \(R=\langle 243,6\rangle\),
the metabelian vertices \(G\) of the coclass tree \(\mathcal{T}^{(2)}(R)\)
with class \(c=\mathrm{cl}(G)\ge 5\), down to depth \(\mathrm{dp}(G)\le 1\),
can be presented in the form
\begin{equation}
\label{eqn:PolycyclicPresentation}
\begin{aligned}
G_c(\alpha,\beta) = \langle\ & x,y,s_2,t_3,s_3,\ldots,s_c\ \mid \\
                  & s_2=\lbrack y,x\rbrack,\ t_3=\lbrack s_2,y\rbrack,\ s_i=\lbrack s_{i-1},x\rbrack \text{ for } 3\le i\le c, \\
                  & x^3=s_c^\alpha,\ y^3s_3^{-2}s_4^{-1}=s_c^\beta,\ s_i^3=s_{i+2}^2s_{i+3} \text{ for } 2\le i\le c-3,\ s_{c-2}^3=s_c^2\ \rangle,
\end{aligned}
\end{equation}
where the parameters \(\alpha\) and \(\beta\) depend on the transfer kernel type \(\varkappa(G)\),
\begin{equation}
\label{eqn:ParametrizedPresentation}
(\alpha,\beta)=
\begin{cases}
(0,0) & \text{ for } \varkappa(G)\sim (0122),\ \mathrm{c}.18, \\
(1,0) & \text{ for } \varkappa(G)\sim (1122),\ \mathrm{E}.6, \\
(0,1) \text{ or } (0,2) & \text{ for } \varkappa(G)\sim (2122),\ \mathrm{H}.4, \\
(1,1) \text{ or } (1,2) & \text{ for } \varkappa(G)\sim (3122)\sim (4122),\ \mathrm{E}.14.
\end{cases}
\end{equation}
This presentation has the benefit of including six periodic sequences with distinct transfer kernel types,
and the drawback of being restricted to the fixed coclass \(2\).



\section{The first \(3\)-class towers of length \(3\)}
\label{s:3ClassTowerLength3}
\noindent
In our long desired disproof of the claim by Scholz and Taussky
\cite[p. 41]{SoTa}
concerning the \(3\)-class tower of the imaginary quadratic field \(F=\mathbb{Q}(\sqrt{-9\,748})\),
we presented the first \(p\)-class towers with exactly three stages, for an odd prime \(p\),
in cooperation with Bush
\cite[Cor. 4.1.1, p. 775]{BuMa}.
The underlying fields \(F\) were of type \(\mathrm{E}.9\) in its ground state,
which admits two possibilities for the second \(3\)-class group,
\(\mathfrak{M}\simeq\langle 2187,302\rangle\) or \(\langle 2187,306\rangle\).
Now we want to illustrate the way which led to the \textit{fork topologies} in Theorem
\ref{thm:ExplicitCovers}
by using the more convenient type \(\mathrm{E}.8\),
where the group \(\mathfrak{M}\) is unique for every state,
in particular, \(\mathfrak{M}\simeq\langle 2187,304\rangle\) for the ground state.

\begin{remark}
\label{rmk:LogInv}
Concerning the notation, we are going to use
\textit{logarithmic type invariants} of abelian \(3\)-groups, for instance
\((21)\hat{=}(9,3)\), \((32)\hat{=}(27,9)\), \((43)\hat{=}(81,27)\), and \((54)\hat{=}(243,81)\). 
\end{remark}

\noindent
Let \(F=\mathbb{Q}(\sqrt{d})\) be an \textit{imaginary} quadratic number field
with \(3\)-class group \(\mathrm{Cl}_3{F}\simeq (3,3)\), and let
\(E_1,\ldots,E_4\)
be the unramified cyclic cubic extensions of \(F\).


\begin{theorem}
\label{thm:3ClassTowerLength3} 
(First towers of type \(\mathrm{E}.8\).) \quad
Let the capitulation of \(3\)-classes of \(F\) in \(E_1,\ldots,E_4\)
be of type \(\varkappa_1{F}\sim (\mathbf{1,2,3},1)\), which is called type \(\mathrm{E}.8\).
Assume further that the \(3\)-class groups of \(E_1,\ldots,E_4\)
possess the abelian type invariants \(\tau_1{F}\sim\lbrack T_1,21,21,21\rbrack\),
where \(T_1\in\lbrace 32,43,54\rbrace\).

Then the length of the \(3\)-class tower of \(F\) is precisely \(\ell_3{F}=3\).
\end{theorem}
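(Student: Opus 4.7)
The plan is to identify the second $3$-class group $\mathfrak{M}:=\mathrm{Gal}(F_3^{(2)}/F)$ from the given Artin pattern, then invoke Theorem \ref{thm:ExplicitCovers} to pin down the $3$-tower group $G:=\mathrm{Gal}(F_3^{(\infty)}/F)$ uniquely, and finally read off the derived length from the structure of that tower group.

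First I would use the Artin transfer pattern $(\varkappa_1,\tau_1)=((1,2,3,1),[T_1,21,21,21])$ to locate $\mathfrak{M}$ as a metabelian vertex in the coclass tree $\mathcal{T}^{(2)}(\langle 243,8\rangle)$. By Theorem \ref{thm:Metab}(2), the transfer kernel type $\mathrm{E}.8$ with $\varkappa=(1231)$ occurs on this tree exactly along the family of distinguished children $G_{c,1}^{(2)}$ of the mainline vertices $M_{c-1}^{(2)}$. The first TTT entry $T_1$ increases monotonically with $c$ along the mainline (the vertical axis of Figure \ref{fig:TreeUMinDisc}), so the three admissible values $T_1\in\{32,43,54\}$ correspond respectively to the ground state $c=5$ (so $\ell=0$), the first excited state $c=7$ ($\ell=1$) and the second excited state $c=9$ ($\ell=2$), where $c=2\ell+5$. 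Hence $\mathfrak{M}\simeq G_{c,1}^{(2)}$ is uniquely determined by $T_1$.

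Second, since $F$ is imaginary quadratic, the Shafarevich condition (in the corrected form cited in the proof of Theorem \ref{thm:ExplicitCovers}) forces the $3$-tower group $G$ to be a Schur $\sigma$-group with balanced presentation $d_1=d_2=2$ whose metabelianization is $\mathfrak{M}$. Part (2) of Theorem \ref{thm:ExplicitCovers} asserts that for odd $c=2\ell+5$ the Shafarevich cover
\[
\mathrm{cov}\!\left(G_{c,1}^{(2)},F\right)=\left\{S_{c,1}^{(\ell+3)}\right\}
\]
is a singleton. Consequently $G\simeq S_{c,1}^{(\ell+3)}$ is determined uniquely by $T_1$, and the fork topology of the root path from $G$ down through $\mathfrak{M}$ is precisely the symmetric symbol in equation \eqref{eqn:ForkSecE}.

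Third, I would establish $\ell_3F=3$ by bracketing the derived length. The upper bound $\ell_3F\le 3$ is immediate from the fact that $G/G''\simeq\mathfrak{M}=G_{c,1}^{(2)}$ is metabelian, together with the observation that $G''$ is abelian; the latter is read off directly from the defining relators of the cover limit $\mathcal{C}^{(1)}$ in Definition \ref{dfn:CoverLimit}, where $y$ and $z$ are declared central of order $3$ and generate $G''$ in every finite quotient $\mathcal{Q}_c^{(1,0)}\simeq S_{c,1}^{(\ell+3)}$. For the lower bound $\ell_3F\ge 3$ I would note that $S_{c,1}^{(\ell+3)}$ has coclass $\ell+3\ge 3$, so it lies on a coclass tree strictly below the metabelian skeleton $\mathcal{T}^{(2)}$ and is therefore non-metabelian; equivalently, the projection $S_{c,1}^{(\ell+3)}\twoheadrightarrow G_{c,1}^{(2)}$ pictured by the blue arrows in Figure \ref{fig:MetabProj} has non-trivial kernel contained in $G'$, forcing $G''\ne 1$.

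The main obstacle is the identification step: one must be confident that the Artin pattern singles out the vertex $G_{c,1}^{(2)}$ among all candidates with the same TKT. This relies on the strict monotonicity of $\tau(1)$ down the mainline of $\mathcal{T}^{(2)}$ combined with the periodicity Theorem \ref{thm:FirstPeriodicity} and the depth-$1$ census of Theorem \ref{thm:Metab}, which together rule out any competing vertex with identical $(\varkappa,\tau)$. Once $\mathfrak{M}$ is pinned down, the existence and uniqueness of the tower group is a direct consequence of Theorem \ref{thm:ExplicitCovers}, and the length computation is mechanical from the presentation of $\mathcal{C}^{(1)}$.
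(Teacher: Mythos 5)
Your overall strategy is essentially the paper's: read off from the Artin pattern that $\mathfrak{M}$ is the depth-one type-$\mathrm{E}.8$ vertex $G_{c,1}^{(2)}$ with odd class $c=2\ell+5$ determined by $T_1$, then use the Shafarevich condition for imaginary quadratic fields to eliminate every member of the cover except the Schur $\sigma$-group, so that $G\simeq S_{c,1}^{(\ell+3)}$. The paper does this hands-on (tree search with the $p$-group generation algorithm, the break-off condition cited from \cite{Ma15} and \cite{Ma15b}, explicit navigation through the bifurcations, elimination by relation rank $d_2=3$), whereas you route it through Theorems \ref{thm:Metab} and \ref{thm:ExplicitCovers}(2); that shortcut is legitimate and not circular, since Theorem \ref{thm:ExplicitCovers} is established independently by the rank comparison and Algorithm \ref{alg:ShafarevichCover}. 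Your identification of $\mathfrak{M}$ is sketchier than the paper's (Theorem \ref{thm:Metab} only describes depth $\le 1$ of $\mathcal{T}^{(2)}$, so by itself it does not exclude competing metabelian $\mathrm{E}.8$-groups at greater depth or elsewhere; the paper defers exactly this point to the cited break-off theorems), but you flag this and the ingredients you name are the right ones.

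There is, however, a genuine error in your final step. You claim $\ell_3{F}\le 3$ because \emph{\(y\) and \(z\) are declared central of order \(3\) and generate \(G''\) in every finite quotient} of $\mathcal{C}^{(1)}$. This is false. In the quotient \eqref{eqn:Quotient} the generators $y$ and $z$ are identified with the commutator words $(w_c^k v_c)^{-1}$ and $w_c^{-1}$; they commute and have order at most $3$, so they generate a subgroup of order at most $9$. But $G''=\ker\bigl(S_{c,1}^{(\ell+3)}\to G_{c,1}^{(2)}\bigr)$ has order $3^{(3\ell+8)-(2\ell+7)}=3^{\ell+1}$ by the logarithmic-order increment recorded in Theorem \ref{thm:ExplicitCovers}(4), which is $27$ already for the second excited state $T_1=(54)$. (Moreover $\lbrack a,z\rbrack=1$ is not among the relations of \eqref{eqn:CoverLimit}, so even the centrality of $z$ is not ``declared''.) Hence your argument does not show that $G''$ is abelian, i.e.\ that the tower stops at stage $3$; the upper bound must come, as in the paper, from the explicit identification of $G$ with the concretely constructed groups $\langle 6561,622\rangle$, $\langle 729,54\rangle-\#2;3-\#1;1-\#2;2$, resp.\ $\langle 729,54\rangle-\#2;3-\#1;1-\#2;1-\#1;1-\#2;2$, whose derived length is verified (computationally, cf.\ \cite{BuMa}) to equal $3$. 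Your lower bound is fine: $G$ lies in the cover of $\mathfrak{M}$ but has strictly larger order, so $G''\ne 1$.
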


\begin{proof}
We employ the \(p\)-group generation algorithm
\cite{Nm,OB}
for searching the Artin pattern \(\mathrm{AP}(F)=(\tau_1{F},\varkappa_1{F})\)
among the descendants of the root \(R:=C_3\times C_3=\langle 9,2\rangle\) in the tree \(\mathcal{T}(R)\).
After two steps, \(\langle 9,2\rangle\leftarrow\langle 27,3\rangle\leftarrow\langle 243,8\rangle\),
we find the next root \(U_5:=\langle 243,8\rangle\) of the unique relevant coclass tree \(\mathcal{T}^{(2)}(U_5)\),
using the assigned simple TKT E.8, \(\varkappa_3=(1231)\), and its associated scaffold TKT c.21, \(\varkappa_0=(0231)\).
Finally, the first component \(T_1=\tau_1(1)\in\lbrace 32,43,54\rbrace\) of the TTT
provides the break-off condition, according to
\cite[Thm. 1.21, p. 79]{Ma15},
resp. Theorem M in
\cite[p. 14]{Ma15b},
and we get
\(\mathfrak{M}\simeq\langle 2187,304\rangle=\langle 729,54\rangle-\#1;4\)
for the ground state \(T_1=(32)\),
\(\mathfrak{M}\simeq\langle 6561,2050\rangle-\#1;2\)
for the \(1\)st excited state \(T_1=(43)\), and
\(\mathfrak{M}\simeq\langle 6561,2050\rangle(-\#1;1)^2-\#1;2\)
for the \(2\)nd excited state \(T_1=(54)\),
where \(\langle 2187,303\rangle=\langle 729,54\rangle-\#1;3\) and \(\langle 6561,2050\rangle=\langle 2187,303\rangle-\#1;1\).
The situation is visualized by Figure
\ref{fig:TreeUMinDisc},
where the three metabelianizations \(\mathfrak{M}\simeq G/G^{\prime\prime}\) of the \(3\)-tower group \(G\),
for the ground state and two excited states,
are emphasized with red color.
Figure
\ref{fig:TreeUMinDisc},
showing the second \(3\)-class groups \(\mathfrak{M}\),
was essentially known to Ascione in 1979
\cite{As1,As2},
and to Nebelung in 1989
\cite{Ne}.
Compare the historical remarks
\cite[\S\ 3, p. 163]{Ma6}.

In the next three Figures
\ref{fig:TreeTopoGStypeE},
\ref{fig:TreeTopoES1typeE},
and
\ref{fig:TreeTopoES2typeE},
which were unknown until 2012,
we present the decisive break-through
establishing the first rigorous proof for three-stage towers of \(3\)-class fields.
The key ingredient is the discovery of periodic bifurcations
\cite[\S\ 3, p. 163]{Ma6}
in the complete descendant tree \(\mathcal{T}(U_5)\) which is of considerably higher complexity
than the coclass tree  \(\mathcal{T}^{(2)}(U_5)\).

For the ground state \(T_1=(32)\),
the first bifurcation yields the cover
\[\mathrm{cov}(\mathfrak{M})=\lbrace\mathfrak{M},\langle 6561,622\rangle\rbrace\]
of
\(\mathfrak{M}\simeq\langle 2187,304\rangle\).
The relation rank \(d_2{\mathfrak{M}}=3\) eliminates \(\mathfrak{M}\)
as a candidate for the \(3\)-tower group \(G\),
according to the Corollary
\cite[p. 7]{Ma15b}
of the Shafarevich Theorem
\cite[Thm. 1.3, pp. 75--76]{Ma15},
and we end up getting \(G\simeq\langle 6561,622\rangle=\langle 729,54\rangle-\#2;4\)
with a siblings topology
\[\mathrm{E}\stackrel{1}{\rightarrow}\ 
\mathrm{c}\ 
\stackrel{2}{\leftarrow}\mathrm{E}\]
which describes the relative location of \(\mathfrak{M}\) and \(G\).

For the first excited state \(T_1=(43)\),
the second bifurcation yields the cover
\[\mathrm{cov}(\mathfrak{M})=\lbrace\mathfrak{M},\langle 6561,621\rangle-\#1;1-\#1;2,\langle 6561,621\rangle-\#1;1-\#2;2\rbrace\]
of
\(\mathfrak{M}\simeq\langle 6561,2050\rangle-\#1;2\),
where \(\langle 6561,621\rangle=\langle 729,54\rangle-\#2;3\).
The relation rank \(d_2=3\) eliminates \(\mathfrak{M}\) and \(\langle 6561,621\rangle-\#1;1-\#1;2\)
as candidates for the \(3\)-tower group \(G\),
according to Shafarevich,
and we get the unique \(G\simeq\langle 6561,621\rangle-\#1;1-\#2;2\)
with fork topology
\[\mathrm{E}\stackrel{1}{\rightarrow}\ 
\left\lbrace\mathrm{c}\stackrel{1}{\rightarrow}\right\rbrace^{2}\
\mathrm{c}\ 
\left\lbrace\stackrel{2}{\leftarrow}\mathrm{c}\stackrel{1}{\leftarrow}\mathrm{c}\right\rbrace\
\stackrel{2}{\leftarrow}\mathrm{E}.\]

Similarly, the second excited state \(T_1=(54)\)
yields a more complex advanced fork topology
\[\mathrm{E}\stackrel{1}{\rightarrow}\ 
\left\lbrace\mathrm{c}\stackrel{1}{\rightarrow}\right\rbrace^{4}\
\mathrm{c}\ 
\left\lbrace\stackrel{2}{\leftarrow}\mathrm{c}\stackrel{1}{\leftarrow}\mathrm{c}\right\rbrace^{2}\
\stackrel{2}{\leftarrow}\mathrm{E}.\qedhere\]
\end{proof}



{\tiny

\vspace{0.2in}

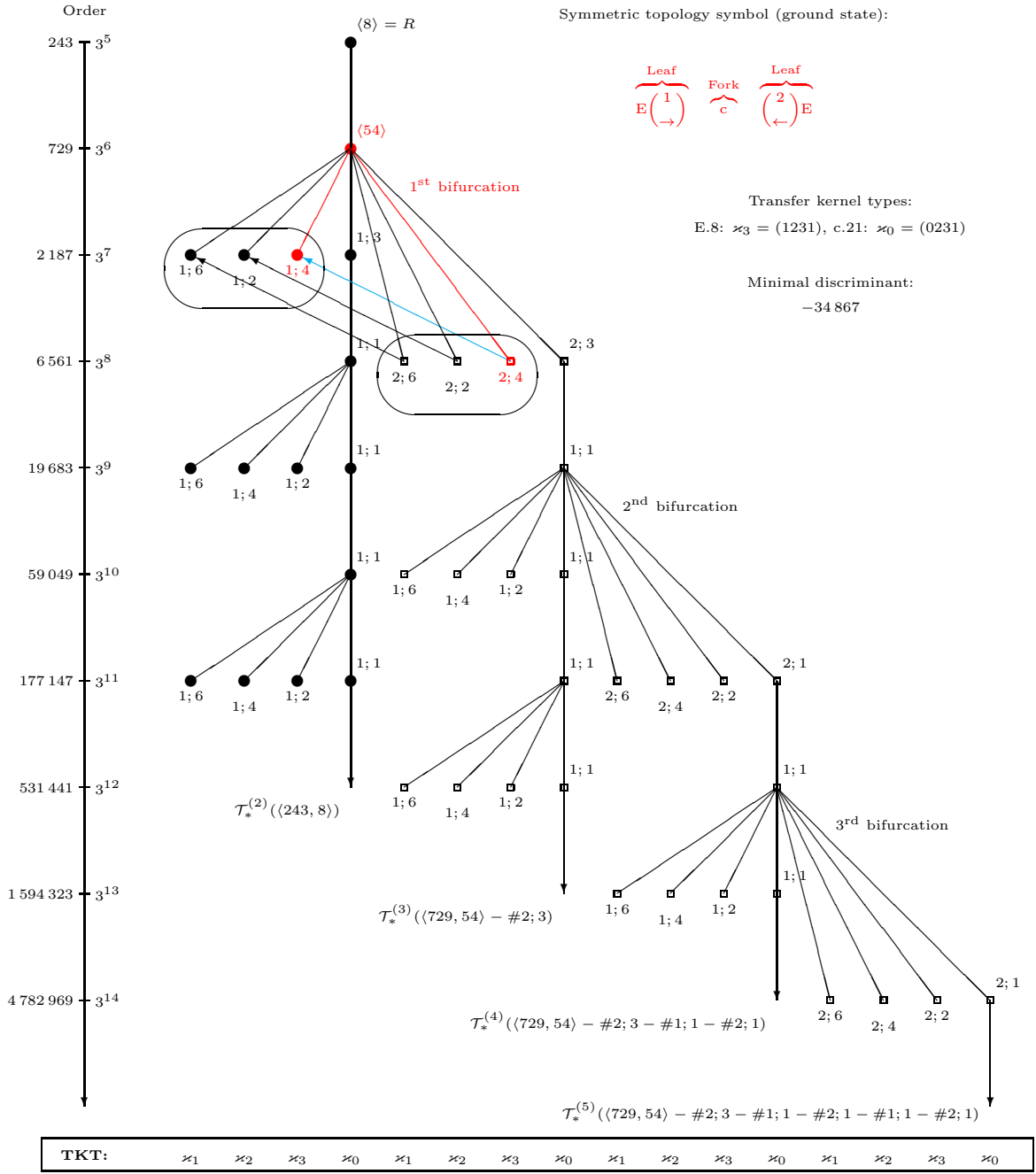
\begin{figure}[ht]
\caption{Tree topology of type E in the ground state}
\label{fig:TreeTopoGStypeE}


\setlength{\unitlength}{0.8cm}
\begin{picture}(18,22)(-6,-21)


\put(-5,0.5){\makebox(0,0)[cb]{Order}}
\put(-5,0){\line(0,-1){18}}
\multiput(-5.1,0)(0,-2){10}{\line(1,0){0.2}}
\put(-5.2,0){\makebox(0,0)[rc]{\(243\)}}
\put(-4.8,0){\makebox(0,0)[lc]{\(3^5\)}}
\put(-5.2,-2){\makebox(0,0)[rc]{\(729\)}}
\put(-4.8,-2){\makebox(0,0)[lc]{\(3^6\)}}
\put(-5.2,-4){\makebox(0,0)[rc]{\(2\,187\)}}
\put(-4.8,-4){\makebox(0,0)[lc]{\(3^7\)}}
\put(-5.2,-6){\makebox(0,0)[rc]{\(6\,561\)}}
\put(-4.8,-6){\makebox(0,0)[lc]{\(3^8\)}}
\put(-5.2,-8){\makebox(0,0)[rc]{\(19\,683\)}}
\put(-4.8,-8){\makebox(0,0)[lc]{\(3^9\)}}
\put(-5.2,-10){\makebox(0,0)[rc]{\(59\,049\)}}
\put(-4.8,-10){\makebox(0,0)[lc]{\(3^{10}\)}}
\put(-5.2,-12){\makebox(0,0)[rc]{\(177\,147\)}}
\put(-4.8,-12){\makebox(0,0)[lc]{\(3^{11}\)}}
\put(-5.2,-14){\makebox(0,0)[rc]{\(531\,441\)}}
\put(-4.8,-14){\makebox(0,0)[lc]{\(3^{12}\)}}
\put(-5.2,-16){\makebox(0,0)[rc]{\(1\,594\,323\)}}
\put(-4.8,-16){\makebox(0,0)[lc]{\(3^{13}\)}}
\put(-5.2,-18){\makebox(0,0)[rc]{\(4\,782\,969\)}}
\put(-4.8,-18){\makebox(0,0)[lc]{\(3^{14}\)}}
\put(-5,-18){\vector(0,-1){2}}


\put(7,0.5){\makebox(0,0)[cc]{Symmetric topology symbol (ground state):}}
{\color{red}
\put(7,-1){\makebox(0,0)[cc]{
\(\overbrace{\mathrm{E}\binom{1}{\rightarrow}}^{\text{Leaf}}\quad
\overbrace{\mathrm{c}}^{\text{Fork}}\quad
\overbrace{\binom{2}{\leftarrow}\mathrm{E}}^{\text{Leaf}}\)
}}
}
\put(9,-3){\makebox(0,0)[cc]{Transfer kernel types:}}
\put(9,-3.5){\makebox(0,0)[cc]{E.8: \(\varkappa_3=(1231)\), c.21: \(\varkappa_0=(0231)\)}}
\put(9,-4.5){\makebox(0,0)[cc]{Minimal discriminant:}}
\put(9,-5){\makebox(0,0)[cc]{\(-34\,867\)}}

\put(0.1,0.2){\makebox(0,0)[lb]{\(\langle 8\rangle=R\)}}
{\color{red}
\put(0.1,-1.8){\makebox(0,0)[lb]{\(\langle 54\rangle\)}}
\put(1.1,-2.8){\makebox(0,0)[lb]{\(1^{\text{st}}\) bifurcation}}
}
\put(0.1,-3.8){\makebox(0,0)[lb]{\(1;3\)}}
\put(0.1,-5.8){\makebox(0,0)[lb]{\(1;1\)}}
\put(0.1,-7.8){\makebox(0,0)[lb]{\(1;1\)}}
\put(0.1,-9.8){\makebox(0,0)[lb]{\(1;1\)}}
\put(0.1,-11.8){\makebox(0,0)[lb]{\(1;1\)}}
\put(0,0){\circle*{0.2}}
{\color{red}
\put(0,-2){\circle*{0.2}}
}
\multiput(0,-4)(0,-2){5}{\circle*{0.2}}
\multiput(0,0)(0,-2){6}{\line(0,-1){2}}
\put(0,-12){\vector(0,-1){2}}
\put(-0.2,-14.2){\makebox(0,0)[rt]{\(\mathcal{T}_\ast^{(2)}(\langle 243,8\rangle)\)}}

\put(-3,-4.2){\makebox(0,0)[ct]{\(1;6\)}}
\put(-3,-8.2){\makebox(0,0)[ct]{\(1;6\)}}
\put(-3,-12.2){\makebox(0,0)[ct]{\(1;6\)}}
\multiput(0,-2)(0,-4){3}{\line(-3,-2){3}}
\multiput(-3,-4)(0,-4){3}{\circle*{0.2}}

\put(-2,-4.4){\makebox(0,0)[ct]{\(1;2\)}}
\put(-2,-8.4){\makebox(0,0)[ct]{\(1;4\)}}
\put(-2,-12.4){\makebox(0,0)[ct]{\(1;4\)}}
\multiput(0,-2)(0,-4){3}{\line(-1,-1){2}}
\multiput(-2,-4)(0,-4){3}{\circle*{0.2}}

{\color{red}
\put(-1,-4.2){\makebox(0,0)[ct]{\(1;4\)}}
}
\put(-1,-8.2){\makebox(0,0)[ct]{\(1;2\)}}
\put(-1,-12.2){\makebox(0,0)[ct]{\(1;2\)}}
{\color{red}
\put(0,-2){\line(-1,-2){1}}
\put(-1,-4){\circle*{0.2}}
}
\multiput(0,-6)(0,-4){2}{\line(-1,-2){1}}
\multiput(-1,-8)(0,-4){2}{\circle*{0.2}}



\multiput(1,-6)(1,0){2}{\vector(-2,1){3.9}}
{\color{cyan}
\multiput(3,-6)(1,0){1}{\vector(-2,1){3.9}}
}

\put(0,-2){\line(1,-1){4}}

\put(4.1,-5.8){\makebox(0,0)[lb]{\(2;3\)}}
\put(4.1,-7.8){\makebox(0,0)[lb]{\(1;1\)}}
\put(5.1,-8.8){\makebox(0,0)[lb]{\(2^{\text{nd}}\) bifurcation}}
\put(4.1,-9.8){\makebox(0,0)[lb]{\(1;1\)}}
\put(4.1,-11.8){\makebox(0,0)[lb]{\(1;1\)}}
\put(4.1,-13.8){\makebox(0,0)[lb]{\(1;1\)}}
\multiput(3.95,-6.05)(0,-2){5}{\framebox(0.1,0.1){}}
\multiput(4,-6)(0,-2){4}{\line(0,-1){2}}
\put(4,-14){\vector(0,-1){2}}
\put(3.8,-16.2){\makebox(0,0)[rt]{\(\mathcal{T}_\ast^{(3)}(\langle 729,54\rangle-\#2;3)\)}}

\put(1,-6.2){\makebox(0,0)[ct]{\(2;6\)}}
\put(1,-10.2){\makebox(0,0)[ct]{\(1;6\)}}
\put(1,-14.2){\makebox(0,0)[ct]{\(1;6\)}}
\put(0,-2){\line(1,-4){1}}
\multiput(4,-8)(0,-4){2}{\line(-3,-2){3}}
\multiput(0.95,-6.05)(0,-4){3}{\framebox(0.1,0.1){}}

\put(2,-6.4){\makebox(0,0)[ct]{\(2;2\)}}
\put(2,-10.4){\makebox(0,0)[ct]{\(1;4\)}}
\put(2,-14.4){\makebox(0,0)[ct]{\(1;4\)}}
\put(0,-2){\line(1,-2){2}}
\multiput(4,-8)(0,-4){2}{\line(-1,-1){2}}
\multiput(1.95,-6.05)(0,-4){3}{\framebox(0.1,0.1){}}

{\color{red}
\put(3,-6.2){\makebox(0,0)[ct]{\(2;4\)}}
}
\put(3,-10.2){\makebox(0,0)[ct]{\(1;2\)}}
\put(3,-14.2){\makebox(0,0)[ct]{\(1;2\)}}
{\color{red}
\put(0,-2){\line(3,-4){3}}
\put(2.95,-6.05){\framebox(0.1,0.1){}}
}
\multiput(4,-8)(0,-4){2}{\line(-1,-2){1}}
\multiput(2.95,-10.05)(0,-4){2}{\framebox(0.1,0.1){}}

\put(4,-8){\line(1,-1){4}}

\put(8.1,-11.8){\makebox(0,0)[lb]{\(2;1\)}}
\put(8.1,-13.8){\makebox(0,0)[lb]{\(1;1\)}}
\put(9.1,-14.8){\makebox(0,0)[lb]{\(3^{\text{rd}}\) bifurcation}}
\put(8.1,-15.8){\makebox(0,0)[lb]{\(1;1\)}}
\multiput(7.95,-12.05)(0,-2){3}{\framebox(0.1,0.1){}}
\multiput(8,-12)(0,-2){2}{\line(0,-1){2}}
\put(8,-16){\vector(0,-1){2}}
\put(7.8,-18.2){\makebox(0,0)[rt]{\(\mathcal{T}_\ast^{(4)}(\langle 729,54\rangle-\#2;3-\#1;1-\#2;1)\)}}

\put(5,-12.2){\makebox(0,0)[ct]{\(2;6\)}}
\put(5,-16.2){\makebox(0,0)[ct]{\(1;6\)}}
\put(4,-8){\line(1,-4){1}}
\multiput(8,-14)(0,-4){1}{\line(-3,-2){3}}
\multiput(4.95,-12.05)(0,-4){2}{\framebox(0.1,0.1){}}

\put(6,-12.4){\makebox(0,0)[ct]{\(2;4\)}}
\put(6,-16.4){\makebox(0,0)[ct]{\(1;4\)}}
\put(4,-8){\line(1,-2){2}}
\multiput(8,-14)(0,-4){1}{\line(-1,-1){2}}
\multiput(5.95,-12.05)(0,-4){2}{\framebox(0.1,0.1){}}

\put(7,-12.2){\makebox(0,0)[ct]{\(2;2\)}}
\put(7,-16.2){\makebox(0,0)[ct]{\(1;2\)}}
\put(4,-8){\line(3,-4){3}}
\multiput(8,-14)(0,-4){1}{\line(-1,-2){1}}
\multiput(6.95,-12.05)(0,-4){2}{\framebox(0.1,0.1){}}

\put(8,-14){\line(1,-1){4}}

\put(12.1,-17.8){\makebox(0,0)[lb]{\(2;1\)}}
\multiput(11.95,-18.05)(0,-2){1}{\framebox(0.1,0.1){}}
\put(12,-18){\vector(0,-1){2}}
\put(11.8,-19.9){\makebox(0,0)[rt]{\(\mathcal{T}_\ast^{(5)}(\langle 729,54\rangle-\#2;3-\#1;1-\#2;1-\#1;1-\#2;1)\)}}

\put(9,-18.2){\makebox(0,0)[ct]{\(2;6\)}}
\put(8,-14){\line(1,-4){1}}
\multiput(8.95,-18.05)(0,-4){1}{\framebox(0.1,0.1){}}

\put(10,-18.4){\makebox(0,0)[ct]{\(2;4\)}}
\put(8,-14){\line(1,-2){2}}
\multiput(9.95,-18.05)(0,-4){1}{\framebox(0.1,0.1){}}

\put(11,-18.2){\makebox(0,0)[ct]{\(2;2\)}}
\put(8,-14){\line(3,-4){3}}
\multiput(10.95,-18.05)(0,-4){1}{\framebox(0.1,0.1){}}

\put(-5,-20.9){\makebox(0,0)[cc]{\textbf{TKT:}}}
\put(-3,-21){\makebox(0,0)[cc]{\(\varkappa_1\)}}
\put(-2,-21){\makebox(0,0)[cc]{\(\varkappa_2\)}}
\put(-1,-21){\makebox(0,0)[cc]{\(\varkappa_3\)}}
\put(0,-21){\makebox(0,0)[cc]{\(\varkappa_0\)}}
\put(1,-21){\makebox(0,0)[cc]{\(\varkappa_1\)}}
\put(2,-21){\makebox(0,0)[cc]{\(\varkappa_2\)}}
\put(3,-21){\makebox(0,0)[cc]{\(\varkappa_3\)}}
\put(4,-21){\makebox(0,0)[cc]{\(\varkappa_0\)}}
\put(5,-21){\makebox(0,0)[cc]{\(\varkappa_1\)}}
\put(6,-21){\makebox(0,0)[cc]{\(\varkappa_2\)}}
\put(7,-21){\makebox(0,0)[cc]{\(\varkappa_3\)}}
\put(8,-21){\makebox(0,0)[cc]{\(\varkappa_0\)}}
\put(9,-21){\makebox(0,0)[cc]{\(\varkappa_1\)}}
\put(10,-21){\makebox(0,0)[cc]{\(\varkappa_2\)}}
\put(11,-21){\makebox(0,0)[cc]{\(\varkappa_3\)}}
\put(12,-21){\makebox(0,0)[cc]{\(\varkappa_0\)}}
\put(-5.8,-21.2){\framebox(18.6,0.6){}}

\put(-2,-4.25){\oval(3,1.5)}

\put(2,-6.25){\oval(3,1.5)}

\end{picture}

\end{figure}

}

\noindent
Figure
\ref{fig:TreeTopoGStypeE}
impressively shows that
entering the unnoticed secret door,
which is provided by the bifurcation at the vertex \(\langle 729,54\rangle\),
immediately leads to the long desired \(3\)-tower group \(G\simeq\langle 6561,622\rangle=\langle 729,54\rangle-\#2;4\)
of the imaginary quadratic field \(F=\mathbb{Q}(\sqrt{-34\,867})\).
The siblings topology is emphasized with red color,
and the projection \(G\to\mathfrak{M}\simeq G/G^{\prime\prime}\) is drawn in blue color.



{\tiny

\vspace{0.2in}

\begin{figure}[ht]
\caption{Tree topology of type E in the first excited state}
\label{fig:TreeTopoES1typeE}


\setlength{\unitlength}{0.8cm}
\begin{picture}(18,22)(-6,-21)


\put(-5,0.5){\makebox(0,0)[cb]{Order}}
\put(-5,0){\line(0,-1){18}}
\multiput(-5.1,0)(0,-2){10}{\line(1,0){0.2}}
\put(-5.2,0){\makebox(0,0)[rc]{\(243\)}}
\put(-4.8,0){\makebox(0,0)[lc]{\(3^5\)}}
\put(-5.2,-2){\makebox(0,0)[rc]{\(729\)}}
\put(-4.8,-2){\makebox(0,0)[lc]{\(3^6\)}}
\put(-5.2,-4){\makebox(0,0)[rc]{\(2\,187\)}}
\put(-4.8,-4){\makebox(0,0)[lc]{\(3^7\)}}
\put(-5.2,-6){\makebox(0,0)[rc]{\(6\,561\)}}
\put(-4.8,-6){\makebox(0,0)[lc]{\(3^8\)}}
\put(-5.2,-8){\makebox(0,0)[rc]{\(19\,683\)}}
\put(-4.8,-8){\makebox(0,0)[lc]{\(3^9\)}}
\put(-5.2,-10){\makebox(0,0)[rc]{\(59\,049\)}}
\put(-4.8,-10){\makebox(0,0)[lc]{\(3^{10}\)}}
\put(-5.2,-12){\makebox(0,0)[rc]{\(177\,147\)}}
\put(-4.8,-12){\makebox(0,0)[lc]{\(3^{11}\)}}
\put(-5.2,-14){\makebox(0,0)[rc]{\(531\,441\)}}
\put(-4.8,-14){\makebox(0,0)[lc]{\(3^{12}\)}}
\put(-5.2,-16){\makebox(0,0)[rc]{\(1\,594\,323\)}}
\put(-4.8,-16){\makebox(0,0)[lc]{\(3^{13}\)}}
\put(-5.2,-18){\makebox(0,0)[rc]{\(4\,782\,969\)}}
\put(-4.8,-18){\makebox(0,0)[lc]{\(3^{14}\)}}
\put(-5,-18){\vector(0,-1){2}}


\put(7,0.5){\makebox(0,0)[cc]{Symmetric topology symbol (\(1\)st excited state):}}
{\color{red}
\put(7,-1){\makebox(0,0)[cc]{
\(\overbrace{\mathrm{E}\binom{1}{\rightarrow}}^{\text{Leaf}}\quad
\overbrace{\left\lbrace\mathrm{c}\binom{1}{\rightarrow}\right\rbrace^{2}\ }^{\text{Mainline}}\quad
\overbrace{\mathrm{c}}^{\text{Fork}}\quad
\overbrace{\left\lbrace\binom{2}{\leftarrow}\mathrm{c}\binom{1}{\leftarrow}\mathrm{c}\right\rbrace\ }^{\text{Maintrunk}}\quad
\overbrace{\binom{2}{\leftarrow}\mathrm{E}}^{\text{Leaf}}\)
}}
}
\put(9,-3){\makebox(0,0)[cc]{Transfer kernel types:}}
\put(9,-3.5){\makebox(0,0)[cc]{E.8: \(\varkappa_3=(1231)\), c.21: \(\varkappa_0=(0231)\)}}
\put(9,-4.5){\makebox(0,0)[cc]{Minimal discriminant:}}
\put(9,-5){\makebox(0,0)[cc]{\(-370\,740\)}}

\put(0.1,0.2){\makebox(0,0)[lb]{\(\langle 8\rangle=R\)}}
{\color{red}
\put(0.1,-1.8){\makebox(0,0)[lb]{\(\langle 54\rangle\)}}
\put(1.1,-2.8){\makebox(0,0)[lb]{\(1^{\text{st}}\) bifurcation}}
\put(0.1,-3.8){\makebox(0,0)[lb]{\(1;3\)}}
\put(0.1,-5.8){\makebox(0,0)[lb]{\(1;1\)}}
}
\put(0.1,-7.8){\makebox(0,0)[lb]{\(1;1\)}}
\put(0.1,-9.8){\makebox(0,0)[lb]{\(1;1\)}}
\put(0.1,-11.8){\makebox(0,0)[lb]{\(1;1\)}}
\put(0,0){\circle*{0.2}}
{\color{red}
\multiput(0,-2)(0,-2){3}{\circle*{0.2}}
}
\multiput(0,-8)(0,-2){3}{\circle*{0.2}}
\put(0,0){\line(0,-1){2}}
{\color{red}
\multiput(0,-2)(0,-2){2}{\line(0,-1){2}}
}
\multiput(0,-6)(0,-2){3}{\line(0,-1){2}}
\put(0,-12){\vector(0,-1){2}}
\put(-0.2,-14.2){\makebox(0,0)[rt]{\(\mathcal{T}_\ast^{(2)}(\langle 243,8\rangle)\)}}

\put(-3,-4.2){\makebox(0,0)[ct]{\(1;6\)}}
\put(-3,-8.2){\makebox(0,0)[ct]{\(1;6\)}}
\put(-3,-12.2){\makebox(0,0)[ct]{\(1;6\)}}
\multiput(0,-2)(0,-4){3}{\line(-3,-2){3}}
\multiput(-3,-4)(0,-4){3}{\circle*{0.2}}

\put(-2,-4.4){\makebox(0,0)[ct]{\(1;2\)}}
\put(-2,-8.4){\makebox(0,0)[ct]{\(1;4\)}}
\put(-2,-12.4){\makebox(0,0)[ct]{\(1;4\)}}
\multiput(0,-2)(0,-4){3}{\line(-1,-1){2}}
\multiput(-2,-4)(0,-4){3}{\circle*{0.2}}

\put(-1,-4.2){\makebox(0,0)[ct]{\(1;4\)}}
{\color{red}
\put(-1,-8.2){\makebox(0,0)[ct]{\(1;2\)}}
}
\put(-1,-12.2){\makebox(0,0)[ct]{\(1;2\)}}
\multiput(0,-2)(0,-8){2}{\line(-1,-2){1}}
\multiput(-1,-4)(0,-8){2}{\circle*{0.2}}
{\color{red}
\put(0,-6){\line(-1,-2){1}}
\put(-1,-8){\circle*{0.2}}
}



\multiput(5,-12)(1,0){2}{\vector(-2,1){7.9}}
{\color{cyan}
\multiput(7,-12)(1,0){1}{\vector(-2,1){7.9}}
}

{\color{red}
\put(0,-2){\line(1,-1){4}}
}

{\color{red}
\put(4.1,-5.8){\makebox(0,0)[lb]{\(2;3\)}}
\put(4.1,-7.8){\makebox(0,0)[lb]{\(1;1\)}}
\put(5.1,-8.8){\makebox(0,0)[lb]{\(2^{\text{nd}}\) bifurcation}}
}
\put(4.1,-9.8){\makebox(0,0)[lb]{\(1;1\)}}
\put(4.1,-11.8){\makebox(0,0)[lb]{\(1;1\)}}
\put(4.1,-13.8){\makebox(0,0)[lb]{\(1;1\)}}
{\color{red}
\multiput(3.95,-6.05)(0,-2){2}{\framebox(0.1,0.1){}}
}
\multiput(3.95,-10.05)(0,-2){3}{\framebox(0.1,0.1){}}
{\color{red}
\put(4,-6){\line(0,-1){2}}
}
\multiput(4,-8)(0,-2){3}{\line(0,-1){2}}
\put(4,-14){\vector(0,-1){2}}
\put(3.8,-16.2){\makebox(0,0)[rt]{\(\mathcal{T}_\ast^{(3)}(\langle 729,54\rangle-\#2;3)\)}}

\put(1,-6.2){\makebox(0,0)[ct]{\(2;6\)}}
\put(1,-10.2){\makebox(0,0)[ct]{\(1;6\)}}
\put(1,-14.2){\makebox(0,0)[ct]{\(1;6\)}}
\put(0,-2){\line(1,-4){1}}
\multiput(4,-8)(0,-4){2}{\line(-3,-2){3}}
\multiput(0.95,-6.05)(0,-4){3}{\framebox(0.1,0.1){}}

\put(2,-6.4){\makebox(0,0)[ct]{\(2;2\)}}
\put(2,-10.4){\makebox(0,0)[ct]{\(1;4\)}}
\put(2,-14.4){\makebox(0,0)[ct]{\(1;4\)}}
\put(0,-2){\line(1,-2){2}}
\multiput(4,-8)(0,-4){2}{\line(-1,-1){2}}
\multiput(1.95,-6.05)(0,-4){3}{\framebox(0.1,0.1){}}

\put(3,-6.2){\makebox(0,0)[ct]{\(2;4\)}}
\put(3,-10.2){\makebox(0,0)[ct]{\(1;2\)}}
\put(3,-14.2){\makebox(0,0)[ct]{\(1;2\)}}
\put(0,-2){\line(3,-4){3}}
\multiput(4,-8)(0,-4){2}{\line(-1,-2){1}}
\multiput(2.95,-6.05)(0,-4){3}{\framebox(0.1,0.1){}}

\put(4,-8){\line(1,-1){4}}

\put(8.1,-11.8){\makebox(0,0)[lb]{\(2;1\)}}
\put(8.1,-13.8){\makebox(0,0)[lb]{\(1;1\)}}
\put(9.1,-14.8){\makebox(0,0)[lb]{\(3^{\text{rd}}\) bifurcation}}
\put(8.1,-15.8){\makebox(0,0)[lb]{\(1;1\)}}
\multiput(7.95,-12.05)(0,-2){3}{\framebox(0.1,0.1){}}
\multiput(8,-12)(0,-2){2}{\line(0,-1){2}}
\put(8,-16){\vector(0,-1){2}}
\put(7.8,-18.2){\makebox(0,0)[rt]{\(\mathcal{T}_\ast^{(4)}(\langle 729,54\rangle-\#2;3-\#1;1-\#2;1)\)}}

\put(5,-12.2){\makebox(0,0)[ct]{\(2;6\)}}
\put(5,-16.2){\makebox(0,0)[ct]{\(1;6\)}}
\put(4,-8){\line(1,-4){1}}
\multiput(8,-14)(0,-4){1}{\line(-3,-2){3}}
\multiput(4.95,-12.05)(0,-4){2}{\framebox(0.1,0.1){}}

\put(6,-12.4){\makebox(0,0)[ct]{\(2;4\)}}
\put(6,-16.4){\makebox(0,0)[ct]{\(1;4\)}}
\put(4,-8){\line(1,-2){2}}
\multiput(8,-14)(0,-4){1}{\line(-1,-1){2}}
\multiput(5.95,-12.05)(0,-4){2}{\framebox(0.1,0.1){}}

{\color{red}
\put(7,-12.2){\makebox(0,0)[ct]{\(2;2\)}}
}
\put(7,-16.2){\makebox(0,0)[ct]{\(1;2\)}}
{\color{red}
\put(4,-8){\line(3,-4){3}}
\put(6.95,-12.05){\framebox(0.1,0.1){}}
}
\multiput(8,-14)(0,-4){1}{\line(-1,-2){1}}
\multiput(6.95,-16.05)(0,-4){1}{\framebox(0.1,0.1){}}

\put(8,-14){\line(1,-1){4}}

\put(12.1,-17.8){\makebox(0,0)[lb]{\(2;1\)}}
\multiput(11.95,-18.05)(0,-2){1}{\framebox(0.1,0.1){}}
\put(12,-18){\vector(0,-1){2}}
\put(11.8,-19.9){\makebox(0,0)[rt]{\(\mathcal{T}_\ast^{(5})(\langle 729,54\rangle-\#2;3-\#1;1-\#2;1-\#1;1-\#2;1)\)}}

\put(9,-18.2){\makebox(0,0)[ct]{\(2;6\)}}
\put(8,-14){\line(1,-4){1}}
\multiput(8.95,-18.05)(0,-4){1}{\framebox(0.1,0.1){}}

\put(10,-18.4){\makebox(0,0)[ct]{\(2;4\)}}
\put(8,-14){\line(1,-2){2}}
\multiput(9.95,-18.05)(0,-4){1}{\framebox(0.1,0.1){}}

\put(11,-18.2){\makebox(0,0)[ct]{\(2;2\)}}
\put(8,-14){\line(3,-4){3}}
\multiput(10.95,-18.05)(0,-4){1}{\framebox(0.1,0.1){}}

\put(-5,-20.9){\makebox(0,0)[cc]{\textbf{TKT:}}}
\put(-3,-21){\makebox(0,0)[cc]{\(\varkappa_1\)}}
\put(-2,-21){\makebox(0,0)[cc]{\(\varkappa_2\)}}
\put(-1,-21){\makebox(0,0)[cc]{\(\varkappa_3\)}}
\put(0,-21){\makebox(0,0)[cc]{\(\varkappa_0\)}}
\put(1,-21){\makebox(0,0)[cc]{\(\varkappa_1\)}}
\put(2,-21){\makebox(0,0)[cc]{\(\varkappa_2\)}}
\put(3,-21){\makebox(0,0)[cc]{\(\varkappa_3\)}}
\put(4,-21){\makebox(0,0)[cc]{\(\varkappa_0\)}}
\put(5,-21){\makebox(0,0)[cc]{\(\varkappa_1\)}}
\put(6,-21){\makebox(0,0)[cc]{\(\varkappa_2\)}}
\put(7,-21){\makebox(0,0)[cc]{\(\varkappa_3\)}}
\put(8,-21){\makebox(0,0)[cc]{\(\varkappa_0\)}}
\put(9,-21){\makebox(0,0)[cc]{\(\varkappa_1\)}}
\put(10,-21){\makebox(0,0)[cc]{\(\varkappa_2\)}}
\put(11,-21){\makebox(0,0)[cc]{\(\varkappa_3\)}}
\put(12,-21){\makebox(0,0)[cc]{\(\varkappa_0\)}}
\put(-5.8,-21.2){\framebox(18.6,0.6){}}

\put(-2,-8.25){\oval(3,1.5)}

\put(6,-12.25){\oval(3,1.5)}

\end{picture}

\end{figure}
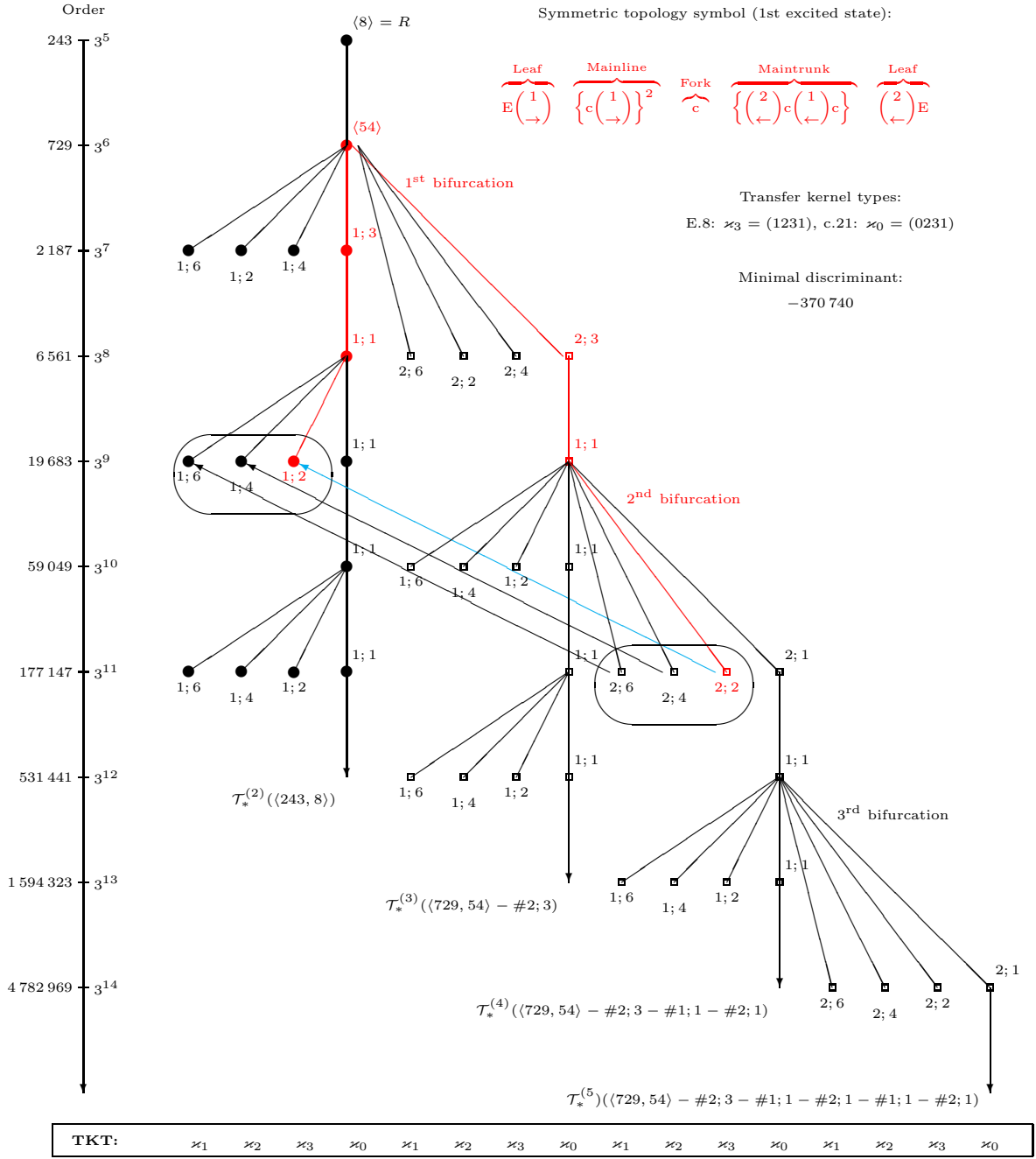

}

\noindent
In Figure
\ref{fig:TreeTopoES1typeE},
we see that the path
to the \(3\)-tower group \(G\simeq\langle 729,54\rangle-\#2;3-\#1;1-\#2;2\)
of the imaginary quadratic field \(F=\mathbb{Q}(\sqrt{-370\,740})\)
contains two bifurcations at \(\langle 729,54\rangle\) and \(\langle 729,54\rangle-\#2;3-\#1;1\).
As before, the fork topology is emphasized with red color,
and the projection \(G\to\mathfrak{M}\simeq G/G^{\prime\prime}\) is drawn in blue color.
Two projection arrows of type \(\mathrm{E}.9\) are black.



{\tiny

\vspace{0.2in}

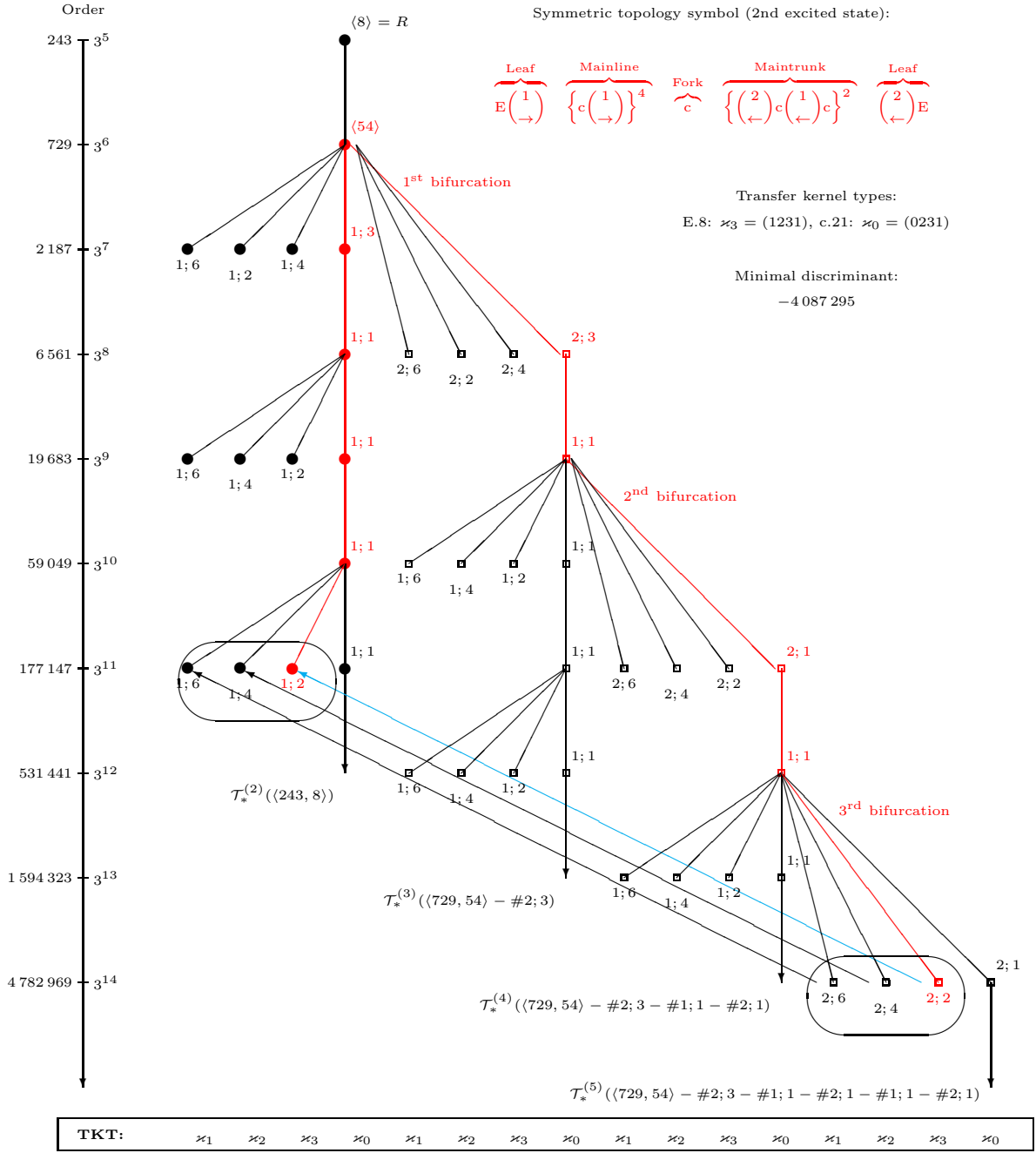
\begin{figure}[ht]
\caption{Tree topology of type E in the second excited state}
\label{fig:TreeTopoES2typeE}


\setlength{\unitlength}{0.8cm}
\begin{picture}(18,22)(-6,-21)


\put(-5,0.5){\makebox(0,0)[cb]{Order}}
\put(-5,0){\line(0,-1){18}}
\multiput(-5.1,0)(0,-2){10}{\line(1,0){0.2}}
\put(-5.2,0){\makebox(0,0)[rc]{\(243\)}}
\put(-4.8,0){\makebox(0,0)[lc]{\(3^5\)}}
\put(-5.2,-2){\makebox(0,0)[rc]{\(729\)}}
\put(-4.8,-2){\makebox(0,0)[lc]{\(3^6\)}}
\put(-5.2,-4){\makebox(0,0)[rc]{\(2\,187\)}}
\put(-4.8,-4){\makebox(0,0)[lc]{\(3^7\)}}
\put(-5.2,-6){\makebox(0,0)[rc]{\(6\,561\)}}
\put(-4.8,-6){\makebox(0,0)[lc]{\(3^8\)}}
\put(-5.2,-8){\makebox(0,0)[rc]{\(19\,683\)}}
\put(-4.8,-8){\makebox(0,0)[lc]{\(3^9\)}}
\put(-5.2,-10){\makebox(0,0)[rc]{\(59\,049\)}}
\put(-4.8,-10){\makebox(0,0)[lc]{\(3^{10}\)}}
\put(-5.2,-12){\makebox(0,0)[rc]{\(177\,147\)}}
\put(-4.8,-12){\makebox(0,0)[lc]{\(3^{11}\)}}
\put(-5.2,-14){\makebox(0,0)[rc]{\(531\,441\)}}
\put(-4.8,-14){\makebox(0,0)[lc]{\(3^{12}\)}}
\put(-5.2,-16){\makebox(0,0)[rc]{\(1\,594\,323\)}}
\put(-4.8,-16){\makebox(0,0)[lc]{\(3^{13}\)}}
\put(-5.2,-18){\makebox(0,0)[rc]{\(4\,782\,969\)}}
\put(-4.8,-18){\makebox(0,0)[lc]{\(3^{14}\)}}
\put(-5,-18){\vector(0,-1){2}}


\put(7,0.5){\makebox(0,0)[cc]{Symmetric topology symbol (\(2\)nd excited state):}}
{\color{red}
\put(7,-1){\makebox(0,0)[cc]{
\(\overbrace{\mathrm{E}\binom{1}{\rightarrow}}^{\text{Leaf}}\quad
\overbrace{\left\lbrace\mathrm{c}\binom{1}{\rightarrow}\right\rbrace^{4}\ }^{\text{Mainline}}\quad
\overbrace{\mathrm{c}}^{\text{Fork}}\quad
\overbrace{\left\lbrace\binom{2}{\leftarrow}\mathrm{c}\binom{1}{\leftarrow}\mathrm{c}\right\rbrace^{2}\ }^{\text{Maintrunk}}\quad
\overbrace{\binom{2}{\leftarrow}\mathrm{E}}^{\text{Leaf}}\)
}}
}
\put(9,-3){\makebox(0,0)[cc]{Transfer kernel types:}}
\put(9,-3.5){\makebox(0,0)[cc]{E.8: \(\varkappa_3=(1231)\), c.21: \(\varkappa_0=(0231)\)}}
\put(9,-4.5){\makebox(0,0)[cc]{Minimal discriminant:}}
\put(9,-5){\makebox(0,0)[cc]{\(-4\,087\,295\)}}

\put(0.1,0.2){\makebox(0,0)[lb]{\(\langle 8\rangle=R\)}}
{\color{red}
\put(0.1,-1.8){\makebox(0,0)[lb]{\(\langle 54\rangle\)}}
\put(1.1,-2.8){\makebox(0,0)[lb]{\(1^{\text{st}}\) bifurcation}}
\put(0.1,-3.8){\makebox(0,0)[lb]{\(1;3\)}}
\put(0.1,-5.8){\makebox(0,0)[lb]{\(1;1\)}}
\put(0.1,-7.8){\makebox(0,0)[lb]{\(1;1\)}}
\put(0.1,-9.8){\makebox(0,0)[lb]{\(1;1\)}}
}
\put(0.1,-11.8){\makebox(0,0)[lb]{\(1;1\)}}
\put(0,0){\circle*{0.2}}
{\color{red}
\multiput(0,-2)(0,-2){5}{\circle*{0.2}}
}
\multiput(0,-12)(0,-2){1}{\circle*{0.2}}
\put(0,0){\line(0,-1){2}}
{\color{red}
\multiput(0,-2)(0,-2){4}{\line(0,-1){2}}
}
\multiput(0,-10)(0,-2){1}{\line(0,-1){2}}
\put(0,-12){\vector(0,-1){2}}
\put(-0.2,-14.2){\makebox(0,0)[rt]{\(\mathcal{T}_\ast^{(2)}(\langle 243,8\rangle)\)}}

\put(-3,-4.2){\makebox(0,0)[ct]{\(1;6\)}}
\put(-3,-8.2){\makebox(0,0)[ct]{\(1;6\)}}
\put(-3,-12.2){\makebox(0,0)[ct]{\(1;6\)}}
\multiput(0,-2)(0,-4){3}{\line(-3,-2){3}}
\multiput(-3,-4)(0,-4){3}{\circle*{0.2}}

\put(-2,-4.4){\makebox(0,0)[ct]{\(1;2\)}}
\put(-2,-8.4){\makebox(0,0)[ct]{\(1;4\)}}
\put(-2,-12.4){\makebox(0,0)[ct]{\(1;4\)}}
\multiput(0,-2)(0,-4){3}{\line(-1,-1){2}}
\multiput(-2,-4)(0,-4){3}{\circle*{0.2}}

\put(-1,-4.2){\makebox(0,0)[ct]{\(1;4\)}}
\put(-1,-8.2){\makebox(0,0)[ct]{\(1;2\)}}
{\color{red}
\put(-1,-12.2){\makebox(0,0)[ct]{\(1;2\)}}
}
\multiput(0,-2)(0,-4){2}{\line(-1,-2){1}}
\multiput(-1,-4)(0,-4){2}{\circle*{0.2}}
{\color{red}
\put(0,-10){\line(-1,-2){1}}
\put(-1,-12){\circle*{0.2}}
}



\multiput(9,-18)(1,0){2}{\vector(-2,1){11.9}}
{\color{cyan}
\multiput(11,-18)(1,0){1}{\vector(-2,1){11.9}}
}

{\color{red}
\put(0,-2){\line(1,-1){4}}
}

{\color{red}
\put(4.1,-5.8){\makebox(0,0)[lb]{\(2;3\)}}
\put(4.1,-7.8){\makebox(0,0)[lb]{\(1;1\)}}
\put(5.1,-8.8){\makebox(0,0)[lb]{\(2^{\text{nd}}\) bifurcation}}
}
\put(4.1,-9.8){\makebox(0,0)[lb]{\(1;1\)}}
\put(4.1,-11.8){\makebox(0,0)[lb]{\(1;1\)}}
\put(4.1,-13.8){\makebox(0,0)[lb]{\(1;1\)}}
{\color{red}
\multiput(3.95,-6.05)(0,-2){2}{\framebox(0.1,0.1){}}
}
\multiput(3.95,-10.05)(0,-2){3}{\framebox(0.1,0.1){}}
{\color{red}
\put(4,-6){\line(0,-1){2}}
}
\multiput(4,-8)(0,-2){3}{\line(0,-1){2}}
\put(4,-14){\vector(0,-1){2}}
\put(3.8,-16.2){\makebox(0,0)[rt]{\(\mathcal{T}_\ast^{(3)}(\langle 729,54\rangle-\#2;3)\)}}

\put(1,-6.2){\makebox(0,0)[ct]{\(2;6\)}}
\put(1,-10.2){\makebox(0,0)[ct]{\(1;6\)}}
\put(1,-14.2){\makebox(0,0)[ct]{\(1;6\)}}
\put(0,-2){\line(1,-4){1}}
\multiput(4,-8)(0,-4){2}{\line(-3,-2){3}}
\multiput(0.95,-6.05)(0,-4){3}{\framebox(0.1,0.1){}}

\put(2,-6.4){\makebox(0,0)[ct]{\(2;2\)}}
\put(2,-10.4){\makebox(0,0)[ct]{\(1;4\)}}
\put(2,-14.4){\makebox(0,0)[ct]{\(1;4\)}}
\put(0,-2){\line(1,-2){2}}
\multiput(4,-8)(0,-4){2}{\line(-1,-1){2}}
\multiput(1.95,-6.05)(0,-4){3}{\framebox(0.1,0.1){}}

\put(3,-6.2){\makebox(0,0)[ct]{\(2;4\)}}
\put(3,-10.2){\makebox(0,0)[ct]{\(1;2\)}}
\put(3,-14.2){\makebox(0,0)[ct]{\(1;2\)}}
\put(0,-2){\line(3,-4){3}}
\multiput(4,-8)(0,-4){2}{\line(-1,-2){1}}
\multiput(2.95,-6.05)(0,-4){3}{\framebox(0.1,0.1){}}

{\color{red}
\put(4,-8){\line(1,-1){4}}
}

{\color{red}
\put(8.1,-11.8){\makebox(0,0)[lb]{\(2;1\)}}
\put(8.1,-13.8){\makebox(0,0)[lb]{\(1;1\)}}
\put(9.1,-14.8){\makebox(0,0)[lb]{\(3^{\text{rd}}\) bifurcation}}
}
\put(8.1,-15.8){\makebox(0,0)[lb]{\(1;1\)}}
{\color{red}
\multiput(7.95,-12.05)(0,-2){2}{\framebox(0.1,0.1){}}
\multiput(8,-12)(0,-2){1}{\line(0,-1){2}}
}
\multiput(7.95,-16.05)(0,-2){1}{\framebox(0.1,0.1){}}
\multiput(8,-14)(0,-2){1}{\line(0,-1){2}}
\put(8,-16){\vector(0,-1){2}}
\put(7.8,-18.2){\makebox(0,0)[rt]{\(\mathcal{T}_\ast^{(4)}(\langle 729,54\rangle-\#2;3-\#1;1-\#2;1)\)}}

\put(5,-12.2){\makebox(0,0)[ct]{\(2;6\)}}
\put(5,-16.2){\makebox(0,0)[ct]{\(1;6\)}}
\put(4,-8){\line(1,-4){1}}
\multiput(8,-14)(0,-4){1}{\line(-3,-2){3}}
\multiput(4.95,-12.05)(0,-4){2}{\framebox(0.1,0.1){}}

\put(6,-12.4){\makebox(0,0)[ct]{\(2;4\)}}
\put(6,-16.4){\makebox(0,0)[ct]{\(1;4\)}}
\put(4,-8){\line(1,-2){2}}
\multiput(8,-14)(0,-4){1}{\line(-1,-1){2}}
\multiput(5.95,-12.05)(0,-4){2}{\framebox(0.1,0.1){}}

\put(7,-12.2){\makebox(0,0)[ct]{\(2;2\)}}
\put(7,-16.2){\makebox(0,0)[ct]{\(1;2\)}}
\put(4,-8){\line(3,-4){3}}
\put(6.95,-12.05){\framebox(0.1,0.1){}}
\multiput(8,-14)(0,-4){1}{\line(-1,-2){1}}
\multiput(6.95,-16.05)(0,-4){1}{\framebox(0.1,0.1){}}

\put(8,-14){\line(1,-1){4}}

\put(12.1,-17.8){\makebox(0,0)[lb]{\(2;1\)}}
\multiput(11.95,-18.05)(0,-2){1}{\framebox(0.1,0.1){}}
\put(12,-18){\vector(0,-1){2}}
\put(11.8,-19.9){\makebox(0,0)[rt]{\(\mathcal{T}_\ast^{(5)}(\langle 729,54\rangle-\#2;3-\#1;1-\#2;1-\#1;1-\#2;1)\)}}

\put(9,-18.2){\makebox(0,0)[ct]{\(2;6\)}}
\put(8,-14){\line(1,-4){1}}
\multiput(8.95,-18.05)(0,-4){1}{\framebox(0.1,0.1){}}

\put(10,-18.4){\makebox(0,0)[ct]{\(2;4\)}}
\put(8,-14){\line(1,-2){2}}
\multiput(9.95,-18.05)(0,-4){1}{\framebox(0.1,0.1){}}

{\color{red}
\put(11,-18.2){\makebox(0,0)[ct]{\(2;2\)}}
\put(8,-14){\line(3,-4){3}}
\multiput(10.95,-18.05)(0,-4){1}{\framebox(0.1,0.1){}}
}

\put(-5,-20.9){\makebox(0,0)[cc]{\textbf{TKT:}}}
\put(-3,-21){\makebox(0,0)[cc]{\(\varkappa_1\)}}
\put(-2,-21){\makebox(0,0)[cc]{\(\varkappa_2\)}}
\put(-1,-21){\makebox(0,0)[cc]{\(\varkappa_3\)}}
\put(0,-21){\makebox(0,0)[cc]{\(\varkappa_0\)}}
\put(1,-21){\makebox(0,0)[cc]{\(\varkappa_1\)}}
\put(2,-21){\makebox(0,0)[cc]{\(\varkappa_2\)}}
\put(3,-21){\makebox(0,0)[cc]{\(\varkappa_3\)}}
\put(4,-21){\makebox(0,0)[cc]{\(\varkappa_0\)}}
\put(5,-21){\makebox(0,0)[cc]{\(\varkappa_1\)}}
\put(6,-21){\makebox(0,0)[cc]{\(\varkappa_2\)}}
\put(7,-21){\makebox(0,0)[cc]{\(\varkappa_3\)}}
\put(8,-21){\makebox(0,0)[cc]{\(\varkappa_0\)}}
\put(9,-21){\makebox(0,0)[cc]{\(\varkappa_1\)}}
\put(10,-21){\makebox(0,0)[cc]{\(\varkappa_2\)}}
\put(11,-21){\makebox(0,0)[cc]{\(\varkappa_3\)}}
\put(12,-21){\makebox(0,0)[cc]{\(\varkappa_0\)}}
\put(-5.8,-21.2){\framebox(18.6,0.6){}}

\put(-2,-12.25){\oval(3,1.5)}

\put(10,-18.25){\oval(3,1.5)}

\end{picture}

\end{figure}

}

\noindent
Figure
\ref{fig:TreeTopoES2typeE}
shows the path
to the \(3\)-tower group \(G\simeq\langle 729,54\rangle-\#2;3-\#1;1-\#2;1-\#1;1-\#2;2\)
of the imaginary quadratic field \(F=\mathbb{Q}(\sqrt{-4\,087\,295})\).
It requires three bifurcations at \(\langle 729,54\rangle\),
\(\langle 729,54\rangle-\#2;3-\#1;1\) and \(\langle 729,54\rangle-\#2;3-\#1;1-\#2;1-\#1;1\).
Again, the fork topology is emphasized with red color,
and the projection \(G\to\mathfrak{M}\simeq G/G^{\prime\prime}\) is drawn in blue color.



\section{Future developments}
\label{s:FutureDevelopments}
\noindent
Fork topologies with significantly higher complexity
and step sizes up to \(3\) and even \(4\)
will be investigated in cooperation with M. F. Newman
\cite{MaNm}
for finite \(3\)-groups with TKT \(\mathrm{F}\).


\section{Acknowledgements}
\label{s:Acknowledgements}
\noindent
The author gratefully acknowledges support from the Austrian Science Fund (FWF):
P 26008-N25.
Indebtedness is expressed to M. F. Newman from the Australian National University, Canberra,
for valuable help in providing evidence of Theorems
\ref{thm:SecondPeriodicity},
\ref{thm:Mainlines},
and
\ref{thm:ExplicitCovers}.




\end{document}